\tikzset{join/.code=\tikzset{after node path={%
\ifx\tikzchainprevious\pgfutil@empty\else(\tikzchainprevious)%
edge[every join]#1(\tikzchaincurrent)\fi}}}
\tikzset{>=stealth',every on chain/.append style={join},
         every join/.style={->}}
\tikzstyle{labeled}=[execute at begin node=$\scriptstyle, execute at end node=$]
\let\al=\alpha
\let\e=\varepsilon
\let\s=\sigma
\let\f=\frac
\let\p=\psi
\def\na{\nabla}
\def\p{\partial}
\newcommand{\beq}{\begin{equation}}
\newcommand{\eeq}{\end{equation}}
\newcommand{\ben}{\begin{eqnarray}}
\newcommand{\een}{\end{eqnarray}}
\newcommand{\beno}{\begin{eqnarray*}}
\newcommand{\eeno}{\end{eqnarray*}}
\newtheorem{theorem}{Theorem}[section]
\newtheorem{lemma}[theorem]{Lemma}
\newtheorem{proposition}[theorem]{Proposition}
\newtheorem{remark}[theorem]{Remark}
\newcommand{\ee}{\mathbf{e}}
\newcommand{\nn}{\mathbf{n}}
\def\bR{\mathbb R}
\def\bN{\mathbb N}
\def\t{\tilde}
\def\q{\quad}
\def\qq{\qquad}
\def\dl{\delta}
\def\Dl{\Delta}
\def\lt{\left}
\def\rt{\right}
\def\i{\infty}
\def\e{\epsilon}
\def \ls{\lesssim}
\def\p{\partial}
\def\f{\frac}
\def\na{\nabla}
\def\al{\alpha}
\def\O{\Omega}
\def\o{\omega}
\def\s{\sqrt}
\def\blue{\color{blue}}
\def\nn{\nonumber}
\def\be{\begin{equation}}
\def\ee{\end{equation}}
\def\bes{\begin{equation*}}
\def\ees{\end{equation*}}
\def\bali{\begin{aligned}}
\def\eali{\end{aligned}}
\def\pf{\noindent {\bf Proof. \hspace{2mm}}}
\def\bT{\mathbb T}
\begin{document}
\title[Vanishing viscosity limit in a strip]
{Vanishing viscosity limit of the Navier-Stokes equations in a horizontally periodic strip}

\author{Mingwen Fei}
\address[M. Fei]{School of  Mathematics and Statistics, Anhui Normal University, Wuhu 241002, China}
\email{mwfei@ahnu.edu.cn}

\author{Xinghong Pan}
\address[X. Pan]{School of Mathematics and Key Laboratory of MIIT, Nanjing University of Aeronautics and Astronautics, Nanjing 211106, China}
\email{xinghong\_87@nuaa.edu.cn}

\author{Jianfeng Zhao}
\address[J. Zhao]{Center for Nonlinear Studies, School of Mathematics, Northwest University, Xi'an 710069, China}
\email{zhaojianfeng@amss.ac.cn}

\begin{abstract}
  In this paper, we establish vanishing viscosity limit of the 2D Navier-Stokes equations in a horizontally periodic strip. On the vertical direction, the horizontal component of the velocity is subjected to two different types of boundary conditions: at the lower boundary, we give the degenerate zero boundary condition, while at the upper boundary, a small smooth perturbation of non-zero constant is prescribed. Due to this different boundary condition setting, the boundary layer effects are different near the lower and upper boundaries, which result in different thickness of boundary layer and different leading order boundary layer equations. We will construct an approximate solution to this 2D Navier-Stokes equations by using higher order asymptotic approximation and show the validity of the boundary layer expansion. The leading order of the Euler solution is the Couette flow $(Ay,0)$ for some suitable constant $A$, which is determined by using the principle of the Prandtl-Batchelor theory.
\end{abstract}

\subjclass[2020]{35Q30, 76D05}

\keywords{vanishing viscosity limit, stationary Navier-Stokes, periodic strip}

\date{\today}
\maketitle

\setcounter{equation}{0}

\numberwithin{equation}{section}
\section{Introduction}

\indent

We consider the following steady Navier-Stokes equations in a horizontally periodic strip $\O=\bT\times[0,1]$
\begin{equation}\label{ns}
\left \{
\begin {array}{ll}
(u^\e\p_x+v^\e\p_y)u^\e+\p_x p^\e-\e^2\Dl u^\e=0,\\ [5pt]
(u^\e\p_x+v^\e\p_y)v^\e+\p_y p^\e-\e^2\Dl v^\e=0,\\ [5pt]
\p_xu^\e+\p_y v^\e=0
\end{array}
\right.
\end{equation}
with boundary condition
\begin{align*}
&(u^\e,v^\e)\big|_{y=1}=(\alpha+\dl f(x),0),\q\ \text{for}\q x\in [0,2\pi),\\
&(u^\e,v^\e)\big|_{y=0}=(0,0),\qq\qq\q \text{for}\q x\in [0,2\pi),
\end{align*}
where  $\epsilon^2>0$ is reciprocal to Reynolds number, $\alpha>0$ is a constant, $(u^\e,v^\e)$ is the velocity, $p^\epsilon$ is the pressure, $\dl$ is a  small number, and $f(x)$ is a $2\pi$-periodic smooth function. Formally, as $\epsilon\rightarrow 0$, we obtain the following 2D steady Euler equations for $(u_e,v_e)$

\begin{equation}\label{equeuler}
\left \{
\begin {array}{ll}
(u_e\p_x+v_e\p_y)u_e+\p_x p_e=0,\\ [5pt]
(u_e\p_x+v_e\p_y)v_e+\p_y p_e=0,\\ [5pt]
\p_x u_e+\p_yv_e=0,
\end{array}
\right.
\end{equation}
with the boundary condition
\bes
v_e|_{y=0,1}=0, \q \text{for}\q x\in [0,2\pi).
\ees
We will show the existence of solution $(u^\epsilon, v^\epsilon)$ to (\ref{ns}) which converges to a solution of the steady Euler equations (\ref{equeuler}).

In the current work, we take the leading order Euler flow $(u_e(x,y),v_e(x,y))$ to be a shear flow.  Moreover, the Prandtl-Batchelor theory without a force implies that the only possible Euler flow is the following Couette flow
\begin{align}
u_e(x,y)=u_e(y), \ v_e(x,y)=0,\nn
\end{align}
where $u_e(y)=Ay+B$ with
\be
A=\Big(\alpha^2+\al\f{\dl}{2\pi}\int^{2\pi}_0f(x)dx+\frac{\dl^2}{2\pi}\int_0^{2\pi}f^2(x)dx\Big)^{\frac12},\q B=0. \label{Adef}
\ee
The constants $A,B$ are deduced by the Batchelor-Wood formula. One can refer to \cite{FGLT,DrivasIN:2023ARXIV} for its detailed derivation. We will also give an explanation later on. See Lemma \ref{lembw} and Section \eqref{sec431}.

Next, we introduce the leading order steady boundary layer equations near the upper boundary $y=1$. Denote the leading order term of the upper boundary layer expansions by $(u_p^{(0)},v_p^{(1)})$, then
\be\label{boundarylayeruppermain}
\left\{
\begin {array}{ll}
\big(A+u_p^{(0)}\big)\partial_x u_p^{(0)}+\big( v_p^{(1)}- v_p^{(1)}(x,0)\big)\partial_\zeta u_p^{(0)}-\partial^2_{\zeta}u_p^{(0)}=0,\\[5pt]
\partial_x u_p^{(0)}+\partial_\zeta v_p^{(1)}=0,\\[5pt]
u_p^{(0)}(x,\zeta)=u_p^{(0)}(x+2\pi,\zeta),\ v_p^{(1)}(x,\zeta)=v_p^{(1)}(x+2\pi,\zeta),\\[5pt]
u_p^{(0)}\big|_{\zeta=0}=\alpha+\dl f(x)-A,\\[5pt]
\lim\limits_{\zeta\rightarrow -\infty}u_p^{(0)}=\lim\limits_{\zeta\rightarrow -\infty}v_p^{(1)}=0,
\end{array}
\right.
\ee
where $\zeta:=\f{y-1}{\e}$ is the scaled variable. This boundary layer equations \eqref{boundarylayeruppermain} is derived by the procedure of matched asymptotic expansion and its well-posedness is stated in Proposition \ref{propdcu0}.

The following is the main result of our paper.
\begin{theorem}\label{thmain}
Assume that $f(x)$ is a smooth $2\pi$-periodic function. Then there exist two constants $\e_0$ and $\dl_0$ such that for any $\e\in(0,\e_0]$ and $\dl\in(0,\dl_0]$, the system \eqref{ns} has a solution $(u^\e,v^\e)$ satisfying
\bes
\bali
\lt\|u^\e(x,y)-Ay-u_p^{(0)}(x,\zeta)\rt\|_{L^\i(\O)}+\lt\|v^\e(x,y)\rt\|_{L^\i(\O)}\leq C\e,
\eali
\ees
where $C$ is independent of $\e$ and $\dl$, $A$ is the constant defined in \eqref{Adef} and $u_p^{(0)}(x,\zeta)$ is the solution of \eqref{boundarylayeruppermain}. \qed
\end{theorem}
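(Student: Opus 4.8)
The plan is to follow the standard architecture of a vanishing-viscosity / boundary-layer justification: (i) construct an approximate solution $(u^\e_{\mathrm{app}}, v^\e_{\mathrm{app}}, p^\e_{\mathrm{app}})$ via matched asymptotic expansions to a sufficiently high order in $\e$; (ii) linearize the Navier--Stokes operator around this approximate solution and derive a coercive energy estimate for the linearized problem; (iii) close the nonlinear problem for the remainder by a fixed-point or contraction argument, using that the consistency error of the approximate solution is of high order in $\e$. The extra small parameter $\dl$ plays a subordinate role: all constructions must be carried out uniformly in $\dl\in(0,\dl_0]$, and the smallness of $\dl$ is used only to guarantee that the upper boundary-layer profile exists (Proposition \ref{propdcu0}) and is small, since its $\zeta=0$ boundary datum $\alpha+\dl f(x)-A = O(\dl)$ vanishes as $\dl\to 0$ by the choice of $A$ in \eqref{Adef}.

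First I would assemble the ansatz. Near $y=1$ the boundary layer has width $\e$ with tangential variable $\zeta=(y-1)/\e$, leading profile $u_p^{(0)}$ solving \eqref{boundarylayeruppermain}; near $y=0$ the degenerate (zero) boundary condition forces a different, thicker layer, with its own scaled variable and leading profile determined by the relevant degenerate Prandtl-type equation. The outer part is the Couette flow $(Ay,0)$ plus higher-order Euler corrections $\sum \e^k(u_e^{(k)}, v_e^{(k)})$ forced by the pressure traces of the layers; the choice of $A,B$ in \eqref{Adef} is exactly what makes the leading-order solvability (the Prandtl--Batchelor / Batchelor--Wood constraint, Lemma \ref{lembw}) hold, which is why no lower-order Euler shear correction is needed at leading order. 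I would carry the expansion to order $\e^N$ with $N$ large enough (typically $N\ge 3$ or $4$) so that, after plugging $(u^\e_{\mathrm{app}},v^\e_{\mathrm{app}})$ into \eqref{ns}, the residual $R^\e$ satisfies $\|R^\e\|\lesssim \e^{N-1}$ in the relevant norm, while cutting off each layer profile smoothly away from its boundary so the two expansions do not interfere and the ansatz is globally defined on $\O$.

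Next comes the heart of the argument: the a priori estimate for the linearized system. Writing $u^\e = u^\e_{\mathrm{app}} + \e^{\gamma} u$, $v^\e = v^\e_{\mathrm{app}} + \e^{\gamma} v$ for a suitable gain $\gamma>0$, the remainder $(u,v)$ solves a linear Stokes-type system with a transport term by the background $(Ay + \text{layers}, \text{layers})$, a zeroth-order term from the derivatives of the background, forcing $\e^{-\gamma}R^\e$, plus the nonlinear self-interaction $\e^\gamma(u\cdot\na)(u,v)$. The goal is an estimate of the form $\|(u,v)\|_{X}\lesssim \|\e^{-\gamma}R^\e\|_{Y} + \e^\gamma\|(u,v)\|_X^2$ in an appropriately weighted space $X$ (energy plus enough derivatives, with anisotropic weights that see the $\e$-thin upper layer and the thicker lower layer). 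The mechanism for coercivity is a positivity/sign structure: the Couette background $Ay$ with $A>0$ provides a favorable contribution, and one controls the dangerous terms $\e^\gamma v\,\partial_\zeta u_p^{(0)}$ (size $\e^{\gamma-1}$ against the $\zeta$-derivative of the layer) by Hardy-type inequalities in the boundary-layer variable together with the smallness of the layer amplitude $O(\dl)$. One must also handle the pressure, e.g. by testing against $(u,v)$ and against a corrector built from the stream function, or by working in the divergence-free class and eliminating $p$ from the start. I expect this step — getting a \emph{uniform in $\e$ and $\dl$} estimate with no loss, despite the $\e^{-1}$-singular layer derivatives and the competition between the two layers of different thickness — to be the main obstacle; it is where the geometry of the problem (two qualitatively different boundary conditions, hence two layer scales) really bites, and where the choice of $N$, the gain $\gamma$, and the weighted norms must be tuned against each other.

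Finally, with the linear estimate in hand, I would run a contraction mapping on a ball $\{\|(u,v)\|_X\le \e^{\mu}\}$ for small $\mu>0$: the map sending $(\bar u,\bar v)$ to the solution of the linearized problem with nonlinearity evaluated at $(\bar u,\bar v)$ is, for $\e_0,\dl_0$ small, a contraction into this ball, because the forcing $\e^{-\gamma}R^\e = O(\e^{N-1-\gamma})$ can be made small and the quadratic term $O(\e^\gamma\|(u,v)\|^2_X)$ is subcritical. This produces $(u^\e,v^\e)$ solving \eqref{ns}. The stated bound $\|u^\e - Ay - u_p^{(0)}\|_{L^\infty} + \|v^\e\|_{L^\infty}\le C\e$ then follows by collecting contributions: the difference between $u^\e$ and $Ay+u_p^{(0)}$ consists of the first-order expansion terms (the $O(\e)$ Euler correction, the $O(\e)$ next boundary-layer profiles, and the lower-layer leading profile which, thanks to its thicker scale and the zero boundary datum, is itself $O(\e)$ in $L^\infty$), the cutoff errors (exponentially small), and the remainder $\e^\gamma(u,v)$, each of which is $O(\e)$ in $L^\infty(\O)$ uniformly in $\dl$; similarly $v^\e = v^\e_{\mathrm{app}} + \e^\gamma v$ with $v^\e_{\mathrm{app}} = O(\e)$ because the leading layer vertical velocities enter at order $\e$ in the physical $v$-variable. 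A Sobolev embedding from the weighted $X$-norm to $L^\infty$ closes the estimate. $\hfill\blacksquare$
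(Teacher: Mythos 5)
Your overall architecture (high-order matched expansion, linear stability estimate around the approximate solution, contraction mapping, then collecting $O(\e)$ contributions) is exactly the paper's, and your closing bookkeeping for the $L^\infty$ bound is right: the lower-layer profile is itself $O(\e)$ in amplitude, so only $Ay+u_p^{(0)}$ survives at order one. However, the step you defer as ``the main obstacle'' is precisely where the proof lives, and as described your coercivity mechanism would not close. The favorable term coming from $u^a\p_x$ degenerates at the lower boundary because $u^a\thickapprox y$ vanishes there; what one actually obtains (by testing the stream-function formulation with $\p_x\phi$) is only the \emph{degenerately weighted} quantity $\int_\O y\,(u_x^2+v_x^2)$, with no control whatsoever of $u_y$ from positivity. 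All absorptions must then go through $y$-weighted Hardy inequalities, $u_y$ is recovered only with an $\e^2$ prefactor from a separate energy estimate (testing with $y\bar\phi$), and the final $L^\infty$ bound must use an anisotropic embedding that avoids $u_{yy}$ entirely, since full $\dot H^2$ control is never available. None of this is visible in your description of the space $X$.

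Two further concrete gaps. First, because the domain is periodic in $x$, the Poincar\'e inequality in $x$ misses the zero mode $u_0(y)=\frac{1}{2\pi}\int_0^{2\pi}u\,dx$, which is invisible to the $\int y\,u_x^2$ coercivity; the paper must split $u=u_0+\bar u$, exploit $\int_\O u^a_{xx}u_0^2=0$ by periodicity, and run a separate unweighted estimate for $u_0$ by testing the momentum equation against $u_0$ itself. Without this decomposition the estimate does not close. Second, your parameters are off in ways that matter: the lower boundary layer has thickness $\e^{2/3}$ (forced by balancing $\e^{1+\beta}\eta\p_x\hat u$ against $\e^{3-2\beta}\p_\eta^2\hat u$, i.e.\ $\beta=2/3$), which in turn forces the entire hierarchy of Euler and layer corrections to proceed in increments of $\e^{1/3}$ rather than integer powers; and because the contraction argument loses factors of $\e^{-2}$ through $\e^{-6}$ at various stages, the residual must be pushed to roughly $O(\e^9)$ (some twenty terms of the expansion), far beyond your suggested $N\ge 3$ or $4$.
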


 \begin{remark}
  In the literatures \cite{IyerZ:2019JDE, IyerZ:2019JDE1, FGLT, FeiGLT:2021ARXIV, GaoZ:2023SCM, GuoI:2023CPAM} and references therein, the authors considered the vanishing viscosity limits of the 2D Navier-Stokes equations in the box domain $(0,L)\times(0,1)$, in the strip domain $(0,L)\times(0,+\i)$, in  a disk, in an annulus and so on. Comparing our current work with theirs, the main novelty of the present paper lies in the below.
  \begin{itemize}
  \item \textbf{Setting of the thickness of the lower boundary layer.} Near the lower boundary $y=0$, where the the degenerate boundary condition is given, the thickness of the lower boundary layer is set to be size $\e^{2/3}$, which is different from the upper boundary layer, where the thickness of the boundary layer is of size $\e$, in the sense of the usual Prandtl boundary expansion. The consequence of this milder scaling is that the boundary layer equations at the lower boundary will become uniform with respect to the viscosity for lower and higher expansions. See Remark \ref{rem4.1} for a detailed explanation.
  \item \textbf{More subtle construction of the asymptotic expansion.} Since we have different scaling asymptotic expansions at the lower and upper boundary, it will be more subtle to construct a combining asymptotic expansion to deal with the coupling effect. Especially for the Euler part, we need to construct more detailed Euler expansions to balance the boundary conditions produced by the upper and lower boundary layer expansions.
  \item \textbf{Weighted $\e$-independence horizontal derivatives' estimates.} since the leading order Euler term is degenerate at the lower boundary, we will perform linear stability estimate in the weighted space with the Couette flow as a weight. Due to the weighted Sobolev embedding and Poincar\'{e} inequality, the $L^\i$ norm of the error term can also be obtained.
  \item \textbf{More physical boundary condition in $\boldsymbol{x}$ direction.} The boundary condition in the $x$ direction in our setting is periodic, which avoids the vertical boundary condition and hence the setting of the problem is more clear than that considered in the previous literature in the domain $(0,L)\times(0,1)$, where most of the boundary conditions on $x\in\{0,L\}$ need to be artificially set. However the periodicity in $x$ direction also brings some new difficulties, e.g., since we do not have the boundary condition on the vertical boundary, the  Poincar\'{e} inequality is invalid. To overcome this difficult we decompose the error of horizontal velocity into  the zero frequency part and the nonzero frequency part, and estimate them in different ways.
  \end{itemize}

\end{remark}


 Due to the boundary condition's mismatch, rigorous justification of vanishing viscosity limit for the Navier-Stokes equations with non-slip boundary condition has always been an important and challenging problem. {See for example \cite{OleinikS:1999,SchlichtingG:2017}.}  In the case of the unsteady Navier-Stokes equations, to the best of our knowledge, the rigorous analysis of the Prandtl boundary
layer theory was given in the analytic framework
\cite{FTZ2018, KNVW2022, KVW2020, M2014, NN2018, SC1998-1, SC1998-2, WWZ2017, ZZ2016} and then  in the function spaces of Gevrey class \cite {CWZ2022, GMM2018, GM2015}. However, we can note that the validity for the Prandtl layer theory in Sobolev framework still remains unclear in this case. One can see  \cite{G2000,GGN2016,GN2019} for some recent progresses on the instability of Prandtl expansion of shear flow type in the Sobolev space. We also remark that the nonlinear stability analysis near Couette flow for the Euler and Navier-Stokes equations is also an important and difficult topic. See \cite{BedrossianM:2015PMI,BedrossianMV:2016ARMA,BedrossianVW:2018JNS,BedrossianH:2020CMP} and references therein for enhanced dissipation, inviscid damping, stability threshold and related results in the inviscid limit of the Navier-Stokes equations near the two dimensional Couette flow in domains $\bT\times\bR$ or $\bT\times[-1,1]$, respectively.

  The problem of inviscid limit of the steady Navier-Stokes equations seem to be  also very involved since there are infinitely many solutions for the steady Euler equations and what is the principle to find the suitable Euler flow is the first problem need to be settled. Actually, for general domain, it is still unknown how to choose a suitable Euler flow. For a simply-connected regions, Prandtl \cite{Prandtl:1904} considered the steady motion viscous incompressible fluid and found that the vorticity of steady Euler flow must be a constant in a region of nested closed streamlines.  This important property was rediscovered later by Batchelor and Wood in \cite{Batchelor:1956, Wood:1957JFM} and the exact value of this vorticity constant is also determined. This kind of result is now referred to as Prandtl-Batchelor theory in the literature. See some related researches on the Prandtl-Batchelor theory in \cite{Kim-thesis, Kim:1998SIAM, KimC:2001SIAM, WV:2007} and references therein.

The Prandtl-Batchelor theory can also be applied to our choice of the suitable Euler flow in our setting domain $\O=\bT\times[0,1]$. From the Prandtl-Batchelor theory in $\O$,  the shear Euler flow $(u_e(y),0)$ must satisfying the following ODE
\be\label{equationofleadingeuler}
\lt\{
\bali
&u_e^{''}(y)=0,\\
&u_e(1)=\Big(\alpha^2+\al\f{\dl}{2\pi}\int^{2\pi}_0f(x)dx+\frac{\dl^2}{2\pi}\int_0^{2\pi}f^2(x)dx\Big)^{\frac12},\q u_e(0)=0,
\eali
\rt.
\ee
which indicate that $u_e(y)=Ay$. The constants $A$ are given by solving the steady the nonlinear Prandtl equations in the Von Mises transformation. See Lemma \ref{lembw}.

In recent years, some important progresses are made for the asymptotic expansion and inviscid limit of the steady Navier-Stokes equations.  Guo-Nguyen \cite{GuoN:2017ANNPDE} considered the Prandtl boundary expansion of the steady Navier-Stokes equations in a moving plate in $(0,L)\times \bR_+$, where the boundary condition on $y=0$ for the horizontal component of the velocity to the Navier-Stokes equations is equal to some nonzero constant and there is a mismatch between this constant and the horizontal component of the shear Euler flow. Such a result was extended to the case of a rotating disk by Iyer \cite{Iyer:2017ARMA}. The case that the Euler flow is a perturbation of a shear flow can be found in Iyer \cite{Iyer:2019SIAM}. The Prandtl boundary layer expansion for motionless non-slip boundary condition, i.e. the velocity of the viscous flow equals to $(0,0)$ on $y=0$, was considered in Guo-Iyer \cite{GuoI:2023CPAM} with the Euler flow being a shear flow, which was extended to the non-shear flow case by Gao-Zhang in \cite{GaoZ:2021ARXIV}. For $x$-periodic domain, Gerard-Varet and Maekawa in \cite{GerardM:2019ARMA} showed a stability result of
the forced steady Navier-Stokes equations around some shear type profile. The above mentioned results are all considered in a narrow domain $L<<1$ or the period in $x$ direction is small.

For any domain with large width in $x$ direction, Iyer \cite{Iyer:2019Peking} showed the global Prandtl boundary expansion over a moving plate in $(0,+\i)\times(0,+\i)$, where the horizontal velocity of the viscous flow
on the boundary is set to be $1-\dl$ with small $\dl$ and the inviscid flow is the shear flow $(1,0)$. In the case of motionless no-slip boundary condition, Gao-Zhang \cite{GaoZ:2023SCM} showed boundary expansion in the domain $(0,L)\times(0,+\i)$ for any positive constant $L$, where the Euler flow is a shear flow and the Prandtl profile is concave in vertical direction.  Iyer-Masmoudi \cite{IyerM:2021AIA, IyerM:2020ARXIV} studied the stability of the shear Euler flow $(1,0)$ and the self-similar Blasius flow. The smallness assumption of $L$ in \cite{GerardM:2019ARMA} was recently improved to some spectral condition by Chen-Wu-Zhang in \cite{ChenWZ:2023SCM}. Iyer-Zhou \cite{IyerZ:2019JDE} established a density result for certain steady shear flows that vanishes at the vertical boundary of a box $(0,L)\times[0,2]$ for any large $L$.  Fei-Gao-Li-Tao in \cite{FGLT, FeiGLT:2021ARXIV} considered Prandtl boundary expansion in a disk and an annulus, where the boundary conditions for the swirl component of the velocity is a small perturbation of the rigid-rotation. Very recently, Gao-Xin in \cite{GX2023} considered the Prandtl boundary expansion in an infinitely long convergent channel, where they have proved the structure stability relies only the the assumption of a curvature-decreasing condition on boundary curves. Here we also mentions some results in \cite{DM:2019, WangZ:2021AIHP,ShenWZ:2021, GuoWZ:2023ANNPDE,WangZ:2023MATHANN} on the dynamic stability, asymptotic behavior, global $C^\i$ regularity and boundary layer separation of the steady Prandtl equations. For the steady viscous incompressible magnetohydrodynamics system,  there are several interesting studies for related problems, see for examples, \cite{DLX,DJL,LYZ2023}.

In this work, in order to prove Theorem \ref{thmain}, we firstly construct an approximate solution to the system \eqref{ns} by the method of asymptotic expansion and then estimate the error which satisfies the linearized Navier-Stokes equations around the constructed approximate solution. A brief view of the strategy of the proof will be given as follows.

{\bf\noindent Construction of an approximation solution}.

We first build an approximate solution $(u^a,v^a)$ by the asymptotic expansion, which contain the Euler part $(u^a_e, v^a_e)$, the upper boundary layer part $(u^a_p,v^a_p)$ and the lower boundary layer part $(\hat{u}^a_p,\hat{v}^a_p)$. The Euler part of the approximate solution satisfies the following estimates
\begin{align}
&\|\p^j_x\p^k_y(u^a_e-Ay)\|_\infty\leq C_{j,k}\epsilon(\dl+\epsilon),\q \|\p^j_x\p^k_yv^a_e\|_\infty\leq C_{j,k} \epsilon(\dl+\epsilon). \label{eulerapprox}
\end{align}
While for the upper boundary layer part, we have for any $j,k,\ell\in \{0\}\cup\bN$,
\begin{align}
\|\zeta^\ell\p^{j}_x\partial_\zeta^k u_p^a\|_\infty\leq C_{j,k,\ell} (\dl+\epsilon), \ \|\zeta^\ell\p^{j}_x\partial_\zeta^k v_p^a\|_\infty\leq C_{j,k,\ell} \epsilon(\dl+\epsilon), \label{prandtlapprox1}
\end{align}
and for the lower boundary layer part, we have for any $j,k,\ell\in \{0\}\cup\bN$,
\begin{align}
\|\eta^\ell\p^{j}_x\partial_\eta^k \hat{u}_p^a\|_\infty\leq C_{j,k,\ell}\e(\dl+\epsilon^{2/3}),  \ \|\eta^\ell\p^{j}_x\partial_\eta^k \hat{v}_p^a\|_\infty\leq C_{j,k,\ell} \epsilon^{1+2/3}(\dl+\epsilon^{2/3}),\label{prandtlapprox2}
\end{align}
where $\zeta:=\f{y-1}{\e}$ is the scaled variable near the upper boundary and $\eta:=\f{y}{\e^{2/3}}$ is the scaled variable near the lower boundary. Estimates \eqref{eulerapprox}, \eqref{prandtlapprox1} and \eqref{prandtlapprox2} will be frequently used in our next error estimates. The construction and the asymptotic behavior of the approximate solution are rather long and very technical. In order to keep fluent and show clearly the proof line of our paper, we postpone this construction to Section \ref{secappro}.

{\bf\noindent Weighted $\dot{H}^1$ error estimate of the linearized Navier-Stokes equations}

Denote the error function by
\beno
u:=u^\epsilon-u^a,\ v:=v^\epsilon-v^a,\ p:=p^\epsilon-p^a,
\eeno
where $p^a$ is the constructed pressure. Then the error function will satisfy the following linear forced Navier-Stokes equations
 \be\label{errorequation}
\left\{
\begin{array}{lll}
u^a \p_xu +v^a\p_yu+u \p_xu^a+v\p_yu^a+\p_xp-\epsilon^2\Dl u=R_u,\\[5pt]
u^a \p_xv +v^a\p_yv+u \p_x v^a+v\p_yv^a+\p_yp-\epsilon^2\Dl v=R_v,\\[5pt]
\p_xu+\p_yv=0,  \\[5pt]
u(x+2\pi,y)=u(x,y), \ v(x+2\pi,y)=v(x,y), \\[5pt]
u(x,0)=v(x,0)=u(x,1)=v(x,1)=0,
\end{array}
\right.
\ee
where $R_u$ and $R_v$ are the remainders.

We will give a weighted $\dot{H}^1$ estimate for the above linearized Navier-Stokes equations, which contain an $\e$ independently weighted estimate(also called positive estimate) and a basic energy estimate. The $\e$ independently weighted estimate comes from the the first term $u^a\p_x u$ and $u^a\p_x v$ in \eqref{errorequation}$_1$ and \eqref{errorequation}$_2$ respectively since we will later see that $u^a\thickapprox y$.

By multiplying  \eqref{errorequation}$_1$ with $u_x$ and  \eqref{errorequation}$_2$ with $v_x$ respectively and then integrating the resulted equations on $\O$ and using the incompressibility, we can achieve the following estimate
\be\label{linear1}
\bali
&\int_{\O}y\lt(u^2_x+ v^2_x\rt)dxdy\\
&\ls \int_{\O}\lt(v^a\p_y u u_x+u \p_xu^au_x+u \p_x v^a v_x\rt) dxdy+\cdots,
\eali\ee
where $``\cdots"$ represents the remainders terms and the higher $\e$ order terms, which can be easily handled. Noting that on the left hand of \eqref{linear1}, there is not weighted  $u_y$ estimate, which results in that we must pay careful attention on the estimate of the right hand term containing $u$ and $u_y$.

The control of the first term on the right hand of \eqref{linear1} comes from the basic energy estimates. Here we give a more subtle explanation. Denote the zero frequency of $u$ in $x$ by
$$u_0(y):=\f{1}{2\pi}\int^{2\pi}_{0}u(x,y)dx,$$
and then decompose $u$ by
$$
u=u_0+\bar{u},
$$
where  $\f{1}{2\pi}\int^{2\pi}_{0}\bar{u}(x,y)dx=0$. Then by integration by parts, we have

\begin{align*}
\int_{\O}u \p_xu^au_x dxdy=&-\f{1}{2}\int_\O u^a_{xx} u^2dxdy\\
                          =&-\f{1}{2}\int_\O u^a_{xx} (u^2_0+2u_0\bar{u}+\bar{u}^2)dxdy\\
                          =&-\f{1}{2}\int_\O u^a_{xx} (2u_0\bar{u}+\bar{u}^2)dxdy.
\end{align*}
In the last line of the above equality, we use the fact that $u_0$ is independent of $y$ and periodicity of $u$ and $u^a$ in $x$.

 Using the boundary condition $u^a|_{y=0}=0$, the differential mean value theorem, estimates \eqref{eulerapprox}, \eqref{prandtlapprox1} and \eqref{prandtlapprox2}, we have for function $f$

\begin{align*}
&|u^a_{xx}f|\ls \e\dl \lt({|f|}+y\f{|f|}{1-y}\rt), \text{ or } |u^a_{xx}|\leq \dl y,\\
& |v^a|+|v^a_x|\ls \e\dl y.
\end{align*}
 See Lemma \ref{lem2.3}. Then by the Cauchy inequality, we can obtain that
\begin{align*}
&\int_{\O}u \p_xu^au_x dxdy\\
&\leq \dl\int_{\O}(y\bar{u})^2dxdy+C\e^2\dl\int_{\O}\lt(\f{u_0^2}{y^2}+\f{u_0^2}{(1-y)^2}\rt)dxdy+C\dl\int_{\O}\bar{u}^2ydxdy\\
&\leq C\dl \int_{\O}u^2_x ydxdy+C\e^2\dl\int_{\O}u^2_{0,y}dxdy, \text{\q by the Poincar\'{e} inequality and Hardy inequality.}
\end{align*}
Also by using the Cauchy inequality, we have
\begin{align*}
&\int_{\O}v^a\p_y u u_x+u \p_x v^a v_xdxdy\\
\leq &C\dl \int_{\O}(u^2_x+v^2_x) ydxdy+C\e^2\dl\int_{\O}(u^2_{0,y}+\bar{u}^2)ydxdy.
\end{align*}
Then \eqref{linear1} is improved to
\bes
\bali
&\int_{\O}y\lt(u^2_x+ v^2_x+v^2_y\rt)dxdy\ls C\e^2\dl\int_{\O}(u^2_{0,y}+{u}^2_y y)dxdy+\cdots,
\eali
\ees
One see more details in Lemma \ref{lemlinearenergy}.

It is not easy to obtain the weighted $L^2$ estimate of $u_y$ by just multiplying \eqref{errorequation}$_1$ by $u_y$ and \eqref{errorequation}$_2$ by $v_y$ and performing the basic energy estimates. The reason is that such weighted energy estimates will cause non-cancellation of the pressure term. In order to avoid such a problem, we will reformulate our system \eqref{errorequation} in the form of stream function.  Existence of the stream function is a typical feature for the 2D incompressible flow. We will perform the $\e$ independently weighted energy estimate(also called positive estimate) for horizontal derivatives $u_x,v_x$ in the the framework of the stream function. One see more details in Lemma \ref{lempositive}.

At last,  by combining the positive estimate and basic energy estimate, we can obtain the following linear stability estimate
\bes
\bali
&\e^2\int_{\O}(u^2_{0,y}+{u}^2_y y)dxdy+\int_{\O}y\lt(u^2_x+ v^2_x+v^2_y\rt)dxdy\ls \cdots.
\eali
\ees
One see more details in Proposition \ref{proplinearstability}.

{\bf\noindent Partial $\dot{H}^2$ and $L^\i$ error estimate and existence}.

In order to obtain $L^\i$ estimate of the error function $(u,v)$, we still need partial $\dot{H}^2$ derivatives. Actually by performing  second order derivatives and integration by parts in \eqref{errorequation}, we can obtain all the second order derivatives estimates except for $u_{yy}$, which is enough for us to deduce the $L^\i$ estimate and close our energy.

 By using the deduced a priori estimates up to partial second order derivatives, the standard contraction mapping implies the existence of $L^\i$ solution to the system \eqref{errorequation}. One see more details in Proposition \ref{existence and error estimate of error equation}.

Throughout the paper, $C_{a,b,c,...}$ denotes a positive constant depending on $a,\,b,\, c,\,...$ which may be different from line to line. We also apply $A\lesssim_{a,b,c,\cdots} B$ to denote $A\leq C_{a,b,c,...}B$, while $A \approx_{a,b,c,...}B$ means $A\leq C_{a,b,c,...}B$ and $B\leq C_{a,b,c,...}A$. For a norm $\|\cdot\|$, we use $\|(f,g,\cdots)\|$ to denote $\|f\|+\|g\|+\cdots$.  For a function $f(x,y)$ and $1\leq p,q \leq +\i$,  define
\bes
\|f\|_{L^p_xL^q_y}:=\lt(\int^{1}_0 \lt(\int_{\bT} |f|^p dx\rt)^{q/p}dy \rt)^{1/q}.
\ees
If $p=q$, we simply write it as $\|f\|_{L^p}$ or $\|f\|_{p}$. If $p=q=+\infty$, we use $\|f\|_{\infty}$ to denote the essential supremum of $f$ in $\Omega$.

Our paper is organized as follows. In section \ref{sec2}, we first present the equations satisfied by the approximate solution and its asymptotic behavior. Then based on the approximate solution's property, we derive the error equations and establish the linear stability $H^1$ estimates of the error solution. In section \ref{sec3}, we obtain the partial $H^2$ and $L^\i$ estimates of the error solution and prove its existence by contraction mapping theorem. At last, in Section \ref{secappro}, we come back to give the construction of the approximate solution.
\section{$H^1$ linear stability estimates of the error equation}\label{sec2}

\indent
In this section, we derive the error equation and establish the $H^1$ linear stability estimate of the error equation.

First we present the following approximate solution $(u^a,v^a,p^a)$  which will be constructed in Section \ref{secappro}:

\begin{eqnarray}\label{app equationd}
\left\{
\begin{array}{ll}
u^a\p_xu^a+v^a\p_yu^a+\p_xp^a-\epsilon^2\Dl u^a=R_u^a, &(x,y)\in \bT\times[0,1],\\[5pt]
u^a\p_xv^a+v^a\p_yv^a+\p_y p^a-\epsilon^2\Dl v^a=R_v^a, &(x,y)\in \bT\times[0,1],\\[5pt]
\p_xu^a+\p_yv^a=0,  &(x,y)\in \bT\times[0,1], \\[5pt]
 u^a(x+2\pi,y)=u^a(x,y), \ v^a(x+2\pi,y)=v^a(x,y), &(x,y)\in \bT\times[0,1],\\[5pt]
u^a(x,1)=\alpha+\dl f(x),\ v^a(x,1)=0, &x\in [0,2\pi], \\[5pt]
 u^a(x,0)=v^a(x,0)=0, & x\in [0,2\pi].
\end{array}
\right.
\end{eqnarray}
where the forced terms $(R_u,R_v)$ satisfy the following estimates
 \begin{align*}
 \|R_u^a\|_2+\|\partial_x R_u^a\|_2\leq C\epsilon^9 , \  \|R_v^a\|_2+\|\partial_x R_v^a\|_2\leq C\epsilon^9.\label{remainderesti}
 \end{align*}

Besides we have the following lemma which will be shown in Section \ref{secappro} to state the asymptotic behavior of the approximate solution $(u^a,v^a)$ near and away from the boundary of the strip $\O$.
\begin{lemma}\label{lemdetail}
The constructed approximate solution $(u^a,v^a)$ is decomposed as
\be\label{uaconstruct}
u^a:=u^a_e+(1-\chi(y))^2u^a_p+\chi^2(y)\hat{u}^a_p+\e^9 h(x,y),\q v^a:=v^a_e+(1-\chi(y))^2v^a_p+\chi^2(y)\hat{v}^a_p,
\ee
where $\chi(y)\in C^\i_c([0,+\i))$ is a function satisfying
\bes
\chi(y)=\lt\{
\bali
&1,\q y\in [0,1/4],\\
&0,\q y\geq 3/4.
\eali
\rt.
\ees
The Euler part $(u^a_e,v^a_e)$ satisfies,
\begin{align}
&\|\p^j_x\p^k_y(u^a_e-Ay)\|_\infty\leq C_{j,k}\epsilon(\dl+\epsilon),\q \|\p^j_x\p^k_yv^a_e\|_\infty\leq C_{j,k} \epsilon(\dl+\epsilon). \label{eulerapproxd}
\end{align}
The upper boundary layer part $(u^a_p,v^a_p)$ satisfies
\begin{align}
\|\zeta^\ell\p^{j}_x\partial_\zeta^k u_p^a\|_\infty\leq C_{j,k,\ell} (\dl+\epsilon), \ \|\zeta^\ell\p^{j}_x\partial_\zeta^k v_p^a\|_\infty\leq C_{j,k,\ell} \epsilon(\dl+\epsilon), \label{prandtlapprox1d}
\end{align}
and the lower boundary layer part $(\hat{u}^a_p,\hat{v}^a_p)$ satisfies
\begin{align}
\|\eta^\ell\p^{j}_x\partial_\eta^k \hat{u}_p^a\|_\infty\leq C_{j,k,\ell} \e(\dl+\epsilon^{2/3}),  \ \|\eta^\ell\p^{j}_x\partial_\eta^k \hat{v}_p^a\|_\infty\leq C_{j,k,\ell} \epsilon^{1+2/3}(\dl+\epsilon^{2/3}).\label{prandtlapprox2d}
\end{align}
While $h(x,y)$ satisfies
\bes
h(x,0)=h(x,1)=0,\q \|\p^j_x\p^k_y h(x,y)\|_{L^\i}\leq C_{j,k}\e^{-k}.
\ees
\qed

\end{lemma}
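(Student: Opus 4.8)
The plan is to build $(u^a,v^a,p^a)$ by a three--scale matched asymptotic expansion: an interior (Euler) part $u^a_e=Ay+\sum_{j\ge1}\e^{j}u_e^{(j)}(x,y)$, $v^a_e=\sum_{j\ge1}\e^{j}v_e^{(j)}(x,y)$ together with a matching pressure $p^a_e$, where every corrector will turn out to be small since its forcing is produced only by the boundary layers; an upper boundary--layer part expanded in powers of $\e$ in the fast variable $\zeta=(y-1)/\e$, with leading term $(u_p^{(0)},v_p^{(1)})$, so $u^a_p=\sum_{j\ge0}\e^{j}u_p^{(j)}(x,\zeta)$ and $v^a_p=\sum_{j\ge1}\e^{j}v_p^{(j)}(x,\zeta)$; and a lower boundary--layer part expanded in powers of $\e^{1/3}$ in the stretched variable $\eta=y/\e^{2/3}$, whose first nonzero term is of order $\e$ and whose $\hat v^a_p$ is smaller than $\hat u^a_p$ by the factor $\e^{2/3}$ forced by $\p_y=\e^{-2/3}\p_\eta$ in the incompressibility relation. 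We insert this ansatz into \eqref{ns}, collect like powers of $\e$ and $\e^{1/3}$, solve the resulting hierarchy, truncate each series after finitely many orders (enough that the residual in \eqref{app equationd} is $O(\e^{9})$; the negative powers of $\e$ produced by differentiating the profiles are harmless after a high truncation), and glue the pieces with the cut--off $\chi$, obtaining \eqref{uaconstruct}.

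The hierarchy is solved order by order. At the leading interior order we recover \eqref{equationofleadingeuler}, so the Prandtl--Batchelor constraint (Lemma~\ref{lembw}) forces the Couette profile $(Ay,0)$ with $A$ as in \eqref{Adef}. At the leading upper boundary--layer order we obtain exactly the nonlinear Prandtl system \eqref{boundarylayeruppermain}, whose solvability and exponential decay in $\zeta$ are supplied by Proposition~\ref{propdcu0}; its amplitude is $O(\dl)$ because the mismatch $\alpha+\dl f(x)-A$ is $O(\dl)$ by \eqref{Adef}. At the leading lower boundary--layer order the vanishing of the Couette weight $Ay$ at $y=0$ removes the self--interaction and leaves a \emph{linear} equation governed by the operator $A\eta\,\p_x-\p_\eta^{2}$ (the balance of convection $Ay\,\p_x\sim\e^{a}$ against diffusion $\e^2\p_y^2\sim\e^{2-2a}$ forces the exponent $a=\tfrac23$, hence the thickness $\e^{2/3}$); this is precisely what makes the rescaled lower equations free of $\e$ at every order, so, as explained in Remark~\ref{rem4.1}, the whole lower hierarchy closes with $\e$--independent bounds. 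All higher interior correctors solve linear Euler--type (Rayleigh--type) systems around Couette, and all higher boundary--layer correctors solve linearizations of \eqref{boundarylayeruppermain} (respectively of the leading lower profile), in each case with source terms and boundary data produced by the already--constructed lower--order pieces; at every step we record the polynomially weighted bounds in $\zeta^{\ell}$ and $\eta^{\ell}$ of \eqref{prandtlapprox1d}--\eqref{prandtlapprox2d}, which propagate by induction since the profile equations admit maximum--principle or weighted energy estimates converting algebraic (or exponential) source decay into the required profile decay.

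The step we expect to be the main obstacle is organizing the mutual feedback between the two boundary layers, mediated by the interior correctors, into a finite and consistently solvable hierarchy, together with the exact tracking of the powers of $\dl$. The blow--down of the upper layer leaves at $y=0$ a trace that an interior corrector must cancel, whose own trace at $y=1$ then re--enters the upper Prandtl hierarchy, and symmetrically for the lower layer; moreover the interior gauge $\e$ and the lower gauge $\e^{1/3}$ are incommensurate, so a single interior order spawns several lower--layer orders. One must also handle the solvability constraints: the zero Fourier mode in $x$ of each interior equation carries no $\p_x$ and therefore imposes a Batchelor--Wood--type condition, which at leading order is Lemma~\ref{lembw} (fixing $A$) and at higher orders fixes the remaining free constants in the $u_e^{(j)}$. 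Carrying this out so that the first interior corrector is genuinely $O(\e(\dl+\e))$, the upper layer $O(\dl+\e)$ and the lower layer $O(\e(\dl+\e^{2/3}))$, i.e.\ the sizes claimed in \eqref{eulerapproxd}--\eqref{prandtlapprox2d}, is the technical core of the construction.

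Finally we check \eqref{app equationd} for the glued sum. Since $\chi\equiv1$ near $y=0$ and $\chi\equiv0$ near $y=1$, replacing $u^a_p$ by $(1-\chi)^2u^a_p$ and $\hat u^a_p$ by $\chi^2\hat u^a_p$ only produces commutator terms supported where $\zeta\sim-\tfrac{1}{4\e}$ or $\eta\sim\tfrac{1}{4\e^{2/3}}$, hence exponentially small in $\e$; the errors from truncating each series after finitely many orders are $O(\e^{9})$ in $L^2$, and likewise with one $\p_x$ applied (which costs no power of $\e$), by the weighted estimates; and the term $\e^9h$ is produced by a routine Stokes/elliptic correction of the residual boundary data, which is itself $O(\e^9)$ and localized at the $\e$--scale in $y$, so that $h(x,0)=h(x,1)=0$, $\|\p_x^j\p_y^kh\|_{\i}\ls_{j,k}\e^{-k}$, and the boundary conditions $u^a(x,0)=v^a(x,0)=0$, $u^a(x,1)=\alpha+\dl f(x)$, $v^a(x,1)=0$ and $\p_xu^a+\p_yv^a=0$ hold exactly. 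Collecting the bounds on the $u_e^{(j)}$, $u_p^{(j)}$, $\hat u_p^{(j)}$ and $h$ then yields the decomposition \eqref{uaconstruct} and the estimates \eqref{eulerapproxd}, \eqref{prandtlapprox1d}, \eqref{prandtlapprox2d}.
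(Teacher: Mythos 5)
Your proposal follows the same overall route as Section \ref{secappro}: a three-region matched asymptotic expansion (Euler interior, upper Prandtl layer in $\zeta=(y-1)/\e$, lower layer in $\eta=y/\e^{2/3}$ governed by the $\e$-free operator $A\eta\p_x-\p_\eta^2$), truncation at high enough order to make the residual $O(\e^9)$, gluing with the cutoff $\chi$, and a corrector $\e^9h$ restoring exact incompressibility. The leading-order identifications also match: the Prandtl--Batchelor/von Mises argument fixing $A$ (Lemma \ref{lembw}), the nonlinear Prandtl system \eqref{boundarylayeruppermain} at the top, and the convection--diffusion balance forcing the exponent $2/3$ at the bottom (Remark \ref{rem4.1}).

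The one place where your ansatz, as written, does not close is the gauge of the expansion. You expand the Euler and upper-layer parts in integer powers of $\e$, and you correctly flag that the interior gauge $\e$ and the lower gauge $\e^{1/3}$ are incommensurate --- but you leave this as an unresolved ``obstacle'' rather than fixing the ansatz. The paper resolves it by grading \emph{all three} expansions in increments of $\e^{1/3}$ beyond the first order (the superscripts $1+\frac{1+k}{3}$ in \eqref{eulerextension}--\eqref{boundaryextensionl}): the trace of $\hat v_p^{(1+\frac{2}{3})}$ at $\eta=0$ forces an Euler corrector $v_e^{(1+\frac{2}{3})}$ at order $\e^{5/3}$, whose trace at $y=1$ in turn feeds an upper-layer corrector at the same fractional order, and so on; without this re-grading the hierarchy cannot absorb the lower layer's feedback and the bookkeeping behind \eqref{eulerapproxd}--\eqref{prandtlapprox2d} and the $O(\e^9)$ residual does not go through. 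Two smaller inaccuracies: the corrector $h$ is not a Stokes/elliptic correction of boundary data but the $x$-antiderivative (via Fourier series, Lemma \ref{corector equation}) of the divergence defect created by the cutoffs, which is $O(\e^9)$ because the profiles carry arbitrarily high \emph{polynomial} weights in $\zeta$ and $\eta$ (not exponential decay, as you assert for the commutator terms); and the boundary-layer limits $A_1,\hat A_1,\dots$ at infinity must be subtracted and absorbed into the Euler correctors via the functions $\phi^{(1+\frac{1+k}{3})}(y)$, a step your outline omits but which is needed for the weighted decay in \eqref{prandtlapprox1d}--\eqref{prandtlapprox2d}.
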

Set the error function by
\beno
u:=u^\epsilon-u^a,\ v:=v^\epsilon-v^a,\ p:=p^\epsilon-p^a,
\eeno
Then there holds
\begin{align}\label{e:error equation}
\left\{
\begin{array}{ll}
-\epsilon^2\Dl u+\p_xp+S_u=R_u, &(x,y)\in \bT\times[0,1],\\[5pt]
-\epsilon^2\Dl v+\p_yp+S_v=R_v,&(x,y)\in \bT\times[0,1],\\[5pt]
\p_xu+\p_yv=0,  &(x,y)\in \bT\times[0,1],\\[5pt]
u(x+2\pi,y)=u(x,y), \ v(x+2\pi,y)=v(x,y),&x\in \bT, \\[5pt]
u(x,0)=v(x,0)=u(x,1)=v(x,1)=0, &x\in \bT,
 \end{array}
\right.
\end{align}
where
\begin{align*}
S_u:&=u^a \p_xu +v^a\p_yu+u \p_xu^a+v\p_yu^a,\\[5pt]
S_v:&=u^a \p_xv +v^a\p_yv+u \p_x v^a+v\p_yv^a,
\end{align*}
and the remainders
\beno
R_u:=R_u^a-(u\p_x+v\p_y)u,\ R_v:=R_v^a-(u\p_x+v\p_y)v.
\eeno

The main estimate for the linear Stokes system \eqref{e:error equation} is the following.
\begin{proposition}\label{proplinearstability}
 Let $(u,v)$ be a smooth solution of (\ref{e:error equation}).
 Define the $x-$mean of $u$ by
 \bes
 u_0=\f{1}{2\pi}\int^{2\pi}_0 u(x,y)dx.
 \ees
Then there exist $\epsilon_0>0, \dl_0>0$ such that for any $\epsilon\in (0,\epsilon_0), \dl\in(0,\dl_0)$, there holds
\begin{align}\label{linearstability}
&\e^2\int_{\bT\times[0,1]}\lt(u^2_{0,y}+yu^2_y\rt)dxdy+\int_{\bT\times[0,1]}\lt(u^2_{x}+v^2_x\rt)ydxdy\nn\\
&\leq C\lt|\int_{\bT\times[0,1]} \lt( R_u u_{x}+R_v v_{x}\rt)dxdy\rt|+C\int_{\bT\times[0,1]} \lt( yR^2_v+y^{\kappa} R^2_u\rt)dxdy+\lt|\int_{\bT\times[0,1]} R_u u_0dxdy\rt|.
\end{align}
Here $\kappa\in[0,1)$.
\end{proposition}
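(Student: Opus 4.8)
The plan is to combine two energy estimates for the linear Stokes system \eqref{e:error equation}: a \emph{positive} (viscosity-independent) estimate obtained by testing against horizontal derivatives of the velocity, and a \emph{basic} energy estimate obtained by testing against the velocity itself. The positive estimate is the source of the weighted $\int y(u_x^2+v_x^2)$ control on the left-hand side of \eqref{linearstability}, while the basic estimate supplies the $\e^2\int(u_{0,y}^2+yu_y^2)$ control and, crucially, is what allows us to absorb the troublesome cross terms that appear in the positive estimate.

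First I would derive the positive estimate. Multiply \eqref{e:error equation}$_1$ by $u_x$ and \eqref{e:error equation}$_2$ by $v_x$, integrate over $\O=\bT\times[0,1]$, and add. Using incompressibility $\p_x u+\p_y v=0$ and the $x$-periodicity, the pressure contribution $\int(\p_x p\, u_x+\p_y p\, v_x)$ should cancel after an integration by parts (this is exactly the reason one wants to work in stream-function form, as the paper flags; the stream function $\psi$ with $u=\p_y\psi$, $v=-\p_x\psi$ lets one rewrite the tested system so the pressure drops out cleanly). The viscous terms $-\e^2\int(\Dl u\, u_x+\Dl v\, v_x)$ integrate, by parts in $x$, to $\e^2\int(|\na u_x|^2+|\na v_x|^2)\ge 0$ up to boundary terms that vanish by the Dirichlet/periodic conditions — so these go to the good side. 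The main gain comes from the $S_u,S_v$ terms: the leading pieces $u^a\p_x u\, u_x$ and $u^a\p_x v\, v_x$ give $\int u^a(u_x^2+v_x^2)$, and since Lemma~\ref{lemdetail} gives $u^a\approx Ay$ (more precisely $u^a = Ay + O(\dl+\e)$-type corrections that are controlled pointwise by $\e(\dl+\e)+(\dl+\e)$ near $y=1$ and by $\e(\dl+\e^{2/3})$ near $y=0$), this is bounded below by $c\int y(u_x^2+v_x^2)$ minus lower-order terms. The remaining terms in $S_u,S_v$ — namely $v^a\p_y u\, u_x$, $u\,\p_x u^a\, u_x$, $v\,\p_y u^a\, u_x$, and the $v$-analogues — are error terms: using the bounds $|v^a|+|v^a_x|\lesssim \e\dl y$ and the estimates on $u^a_{xx}$ recorded before Lemma~\ref{lem2.3}, together with Cauchy–Schwarz, Poincaré in $x$ for the zero-mean part $\bar u$, and the Hardy inequality in $y$ for $u_0$, each of these is bounded by $C\dl\int y(u_x^2+v_x^2)+C\e^2\dl\int(u_{0,y}^2+yu_y^2)$. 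Finally the remainders $R_u u_x+R_v v_x$ and the $O(\e^9 h)$ pieces of $u^a$ go into the right-hand side of \eqref{linearstability}; handling $v_y$ on the left is via incompressibility, $v_y=-u_x$. Taking $\dl$ small absorbs the $C\dl\int y(u_x^2+v_x^2)$ term.

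Next I would run the basic energy estimate: test \eqref{e:error equation}$_1$ by $u$ and \eqref{e:error equation}$_2$ by $v$, add, use incompressibility to kill the pressure, and get $\e^2\int(|\na u|^2+|\na v|^2)$ on the good side. The key structural point, which the excerpt previews, is that the quadratic-in-error terms from $S_u,S_v$ after integration by parts become $-\tfrac12\int u^a_{xx}(2u_0\bar u+\bar u^2)$ and similar, using that $u_0$ is $y$-independent and periodicity kills the $u_0^2$ piece; all of these are then bounded by $C\dl\int y(u_x^2+v_x^2)+C\e^2\dl\int(u_{0,y}^2+yu_y^2)$ plus remainder contributions $|\int R_u u_0|$, $\int(yR_v^2+y^\kappa R_u^2)$ after Cauchy–Schwarz with appropriate weights (the asymmetric weights $y$ vs.\ $y^\kappa$ reflect that $v$ vanishes faster than $u$ near $y=0$, so one can afford a full $y$ on $R_v$ but only $y^\kappa$, $\kappa<1$, on $R_u$, matching a Hardy-type inequality for the zero frequency of $u$). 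This yields $\e^2\int(u_{0,y}^2+yu_y^2)\lesssim \e^2\int y(u_x^2+v_x^2)+\big|\int R_u u_x+R_v v_x\big|+\int(yR_v^2+y^\kappa R_u^2)+\big|\int R_u u_0\big|$.

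Finally, I would combine the two: add the positive estimate and $\theta$ times the basic estimate for a small fixed $\theta>0$. The positive estimate controls $\int y(u_x^2+v_x^2+v_y^2)$ at the cost of $+C\e^2\dl\int(u_{0,y}^2+yu_y^2)$, which the basic estimate controls; conversely the basic estimate's $+C\e^2\dl\int y(u_x^2+v_x^2)$ is absorbed by the positive estimate's main term once $\dl$ is small. After this bootstrap all cross terms are absorbed into the left-hand side, leaving exactly \eqref{linearstability}. \textbf{The main obstacle} I anticipate is the treatment of the term $\int u\,\p_x u^a\, u_x$ (and, in the basic estimate, $\int u^a_{xx} u_0\bar u$): the left-hand side of the positive estimate carries \emph{no} weighted $\int y\,u_y^2$, so controlling anything involving $u$ and $u_y$ forces the split $u=u_0+\bar u$ and a careful interplay between the Poincaré inequality in $x$ (for $\bar u$), the Hardy inequality in $y$ near both $y=0$ and $y=1$ (for $u_0$, exploiting $u_0(0)=u_0(1)=0$), and the degeneracy $u^a_{xx}=O(\dl y)$ near the lower boundary — getting the powers of $y$, $\dl$, and $\e$ to line up so that everything is either absorbable or matches the stated right-hand side is the delicate part of the argument.
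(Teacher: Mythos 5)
Your overall architecture (an $\e$-independent positive estimate plus an $\e^2$-weighted energy estimate, combined and closed by taking $\dl_0$ small) is the paper's, and the positive-estimate half of your plan is essentially right: testing with $(u_x,v_x)$ does cancel the pressure, and the paper performs exactly this computation, merely phrased through the stream function (multiplying \eqref{e:error equation3}$_1$ by $\p_x\phi$), with the decomposition $u=u_0+\bar u$, Poincar\'e in $x$, and Hardy in $y$ used as you describe.

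The gap is in your ``basic energy estimate.'' Testing \eqref{e:error equation} with $(u,v)$ produces, from the term $v\,\p_y u^a$ in $S_u$, the lift-up integral $\int v\,\p_y u^a\, u\approx A\int uv$, which has no smallness and cannot be absorbed. The zero-mode piece vanishes ($\int_{\bT}v\,dx=0$), but $A\int\bar u v$ requires $\|\bar u\|_2\|v\|_2$: while $\|v\|_2^2\lesssim\int y\,u_x^2$ is controlled at order one, the only controls on $\bar u$ are $\|\bar u\|_2\lesssim\|u_x\|_2$ (unweighted, not controlled near $y=0$ by $\int y\,u_x^2$) or $\|\bar u\|_2\lesssim\|y\bar u_y\|_2$, which costs a factor $\e^{-1}$ against the available $\e^2\int y\,u_y^2$; either way the term cannot be dominated by a small multiple of the left-hand side. (Relatedly, your claimed output $\e^2\|\nabla(u,v)\|_2^2$ is strictly stronger than the proposition's $\e^2\int(u_{0,y}^2+y\,u_y^2)$ --- the weight $y$ and the absence of $v_y,\,u_x$ at this order are not cosmetic --- and the identity $-\tfrac12\int u^a_{xx}(2u_0\bar u+\bar u^2)$ you invoke comes from the multiplier $u_x$, i.e.\ from the positive estimate, not from the multiplier $u$.) The paper sidesteps the lift-up term by running this part of the argument on the vorticity equation, where the background vorticity $\omega^a=\p_xv^a-\p_yu^a$ is a perturbation of the constant $-A$, so $(u\p_x+v\p_y)\omega^a$ is genuinely perturbative: it multiplies \eqref{e:error equation3}$_1$ by $y\bar\phi$ to obtain $\e^2\int y\,\bar\phi_{yy}^2=\e^2\int y\,\bar u_y^2$ (the weight $y$ being forced by the degeneracy $u^a\approx Ay$ in the transport term), and separately tests the first momentum equation against the zero mode $u_0$, for which $\int A v u_0=0$, to obtain $\e^2\int u_{0,y}^2$. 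Your proof would need to be repaired along these lines.
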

Proof of Proposition \ref{proplinearstability} contains a positive estimate in Section \ref{subspositive} to obtain the $\e$ independently horizontal derivative estimate for the second term on the left hand of \eqref{linearstability} and the energy estimate in Section \ref{subsenergy} to  obtain the vertical derivative estimate for the first term on the left hand of \eqref{linearstability}.

Before giving the proof of Proposition \ref{proplinearstability}, we give two useful lemmas which will be frequently used in the following proof. One is some detailed estimates for the approximate solution that we actually need  and the other is the Hardy inequality in one dimensional space.
{\blue
\begin{lemma} \label{lem2.3}
By using the boundary conditions  in \eqref{app equationd} and the asymptotic behaviors in \eqref{eulerapproxd}, \eqref{prandtlapprox1d}, \eqref{prandtlapprox2d} satisfied by $(u^a,v^a)$, we have the following estimates.
\be\label{approxdetailed}
u^a\thickapprox y, \q |\p^j_xu^a|\ls_j \dl y, \q {j\geq 1},
\ee
and
\be\label{approxdetailed1}
|\p^j_x v^a|\ls_j \e \dl y,\q |\p^j_xv^a_y|\ls_j \dl y, \q {j\geq 0}.
\ee
\end{lemma}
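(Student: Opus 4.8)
The plan is to read off all four estimates directly from the decomposition \eqref{uaconstruct}, the homogeneous boundary data $u^a(x,0)=v^a(x,0)=v^a(x,1)=0$ of \eqref{app equationd}, and the weighted bounds \eqref{eulerapproxd}--\eqref{prandtlapprox2d}, by combining three elementary mechanisms. (i) Since $u^a$ vanishes on $\{y=0\}$, so does $\partial_x^ju^a$ for $j\ge1$, and $v^a$ vanishes on $\{y=0\}$ as well; hence $u^a(x,y)=\int_0^y\partial_yu^a\,dy'$, $\partial_x^ju^a(x,y)=\int_0^y\partial_y\partial_x^ju^a\,dy'$ and $v^a(x,y)=\int_0^y\partial_yv^a\,dy'$, so any uniform (i.e. $O(1)$-in-$y$) bound on a $y$-derivative upgrades to one carrying a factor $y$. (ii) The weights $\zeta^\ell,\eta^\ell$ in \eqref{prandtlapprox1d}--\eqref{prandtlapprox2d} encode polynomial decay: on $\{y\le3/4\}$ one has $|\zeta|=(1-y)/\e\ge1/(4\e)$, so $\|\partial_x^j\partial_\zeta^ku_p^a\|_{L^\infty(\{y\le3/4\})}\le C_{j,k,\ell}\e^\ell$ for every $\ell$; since $(1-\chi)^2$ is supported in $\{y\ge1/4\}$, the term $(1-\chi)^2u_p^a$ is $O(\e^N)$ for every $N$ away from the top boundary, and symmetrically $\chi^2\hat u_p^a$ is $O(\e^N)$ on $\{y\ge1/4\}$ because there $\eta\ge1/(4\e^{2/3})$. (iii) All $x$-dependence of the expansion is inherited from the single datum $\dl f(x)$ --- indeed, when $\dl=0$ the Couette flow $(\alpha y,0)$ is already an exact solution of \eqref{ns} --- so every $x$-differentiated quantity carries a factor $\dl$, and moreover $v^a$ itself carries a factor $\dl$ since its $x$-average is forced to vanish by incompressibility together with $2\pi$-periodicity in $x$ and $v^a|_{y=0,1}=0$; this refines \eqref{eulerapproxd}--\eqref{prandtlapprox2d} to the $\dl$- and $\e\dl$-weighted forms used here, as is clear from the construction carried out in Section \ref{secappro}.

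Concretely, for $u^a\thickapprox y$ I would split $\O$ into the slabs $\{0\le y\le1/4\}$, $\{1/4\le y\le3/4\}$ and $\{3/4\le y\le1\}$. On the bottom slab $\chi\equiv1$, so $u^a=u^a_e+\hat u^a_p+\e^9h$; using $h(x,0)=0$, $u^a_e(x,0)+\hat u^a_p(x,0)=0$, the integral representation of (i), and the pointwise bounds $\partial_yu^a_e=A+O(\e(\dl+\e))$, $\e^{-2/3}\partial_\eta\hat u^a_p=O(\e^{1/3}(\dl+\e^{2/3}))$, $\e^9\partial_yh=O(\e^8)$, one gets $u^a(x,y)=Ay+O\big((\e(\dl+\e)+\e^{1/3}(\dl+\e^{2/3})+\e^8)\,y\big)$, which is $\thickapprox y$ once $\e_0,\dl_0$ are small, because $A\thickapprox\alpha>0$. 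On the middle slab both boundary-layer terms are $O(\e^N)$ by (ii), so $u^a=Ay+O(\e(\dl+\e))\thickapprox A\thickapprox1\thickapprox y$ since $y$ is bounded away from $0$. On the top slab $\chi\equiv0$, so $u^a=u^a_e+u_p^a+\e^9h=A+O(\dl+\e)\thickapprox A\thickapprox1\thickapprox y$. For $|\partial_x^ju^a|\lesssim_j\dl y$ with $j\ge1$: $\partial_x^ju^a$ vanishes at $y=0$ and equals $\partial_x^ju^a_e+(1-\chi)^2\partial_x^ju^a_p+\chi^2\partial_x^j\hat u^a_p+\e^9\partial_x^jh$, each summand being $O(\dl)$ by (iii) (the boundary-layer term not adjacent to the boundary in question is in fact $O(\e^N)$ by (ii)); integrating its $y$-derivative from $0$ supplies the factor $y$. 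The bound $|\partial_x^jv^a|\lesssim_j\e\dl y$ is obtained in the same way from $v^a(x,0)=0$ and the extra factor $\e$ in (iii); near the bottom one integrates $\e^{-2/3}\partial_\eta\hat v^a_p$ from $y=0$ as above, and near the top the $\zeta^\ell$-decay of $v^a_p$ makes that term $O(\e^N)$. Finally $|\partial_x^jv^a_y|\lesssim_j\dl y$ is immediate from incompressibility $\partial_xu^a+\partial_yv^a=0$, which gives $\partial_x^jv^a_y=-\partial_x^{j+1}u^a$ and hence reduces to the already-proved bound $|\partial_x^{j+1}u^a|\lesssim_{j+1}\dl y$.

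The main obstacle is reconciling the two faces of a boundary-layer profile: adjacent to its own boundary it is genuinely $O(1)$ in $y$ and does not decay like $y$, yet the target bounds carry a factor $y$. This is resolved by the observation that near that boundary $y$ is itself comparable to a fixed positive constant, so an $O(1)$ bound and an $O(y)$ bound coincide there, while away from that boundary the weighted $\zeta^\ell$- or $\eta^\ell$-decay renders the term negligible. The only input that is not purely formal is the $\dl$- and $\e\dl$-scaling of the $x$-dependent part of the approximate solution asserted in (iii); once this is extracted from the construction of Section \ref{secappro}, the remaining bookkeeping is a routine application of the fundamental theorem of calculus together with the profile decay, and requires no new idea.
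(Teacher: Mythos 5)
Your proposal is correct and follows essentially the same route as the paper: vanishing of $u^a$, $\partial_x^ju^a$, $v^a$ at $y=0$ plus a uniform bound on the corresponding $y$-derivative (with the lower layer contributing $\e^{-2/3}\partial_\eta\hat u^a_p=O(\e^{1/3}\dl)$) yields the factor $y$ near the bottom, the trivial bound $y\thickapprox1$ handles the upper region, and incompressibility gives $\partial_x^jv^a_y=-\partial_x^{j+1}u^a$. The only cosmetic difference is that you invoke the $\zeta^\ell$/$\eta^\ell$ decay to discard the "wrong" boundary layer away from its boundary, whereas the paper simply uses the supports of the cutoffs $(1-\chi)^2$ and $\chi^2$; and your remark that the $\dl$-scaling of the $x$-differentiated profiles must be read off from the construction in Section \ref{secappro} is a point the paper passes over silently.
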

\begin{proof}
We first show that when $y\leq \f{1}{4},$ $|(u^a-Ay)_y|\leq C\e^{1/3}\delta$. From \eqref{uaconstruct}, $u^a=u^a_e+\hat{u}^a_p$ By using \eqref{eulerapproxd} and \eqref{prandtlapprox2d}, we have
\begin{align}
|(u^a-Ay)_y|=&|(u^a_e-Ay)_y|+|\hat{u}^a_{p,y}|+\e^9|h_y|\nn\\
             \ls&  \e\dl+\e^{-2/3}|\hat{u}^a_{p,\eta}|+\e^8\ls \e\dl+\e^{1/3}\dl +\e^8\ls \e^{1/3}\delta. \label{uathird}
\end{align}
So using the boundary condition that $(u^a-Ay)|_{y=0}$,
\be\label{uafirst}
|u^a-Ay|\leq\|(u^a-Ay)_y\|_{L^\i}y\leq C\e^{1/3}\dl y,\text{ for } y\in[0,1/4].
\ee
Also, from  \eqref{eulerapproxd}, \eqref{prandtlapprox1d} and \eqref{prandtlapprox2d}, it is easy to see that
\be\label{uasecond}
|u^a-Ay|\leq C\dl \leq C\dl y,\text{ for } y\in[1/4, 1].
\ee
Combining \eqref{uafirst} and \eqref{uasecond}, we see that $u^a\thickapprox y$.

For $1 \leq j\in\bN$, first by using \eqref{eulerapproxd} and \eqref{prandtlapprox2d}, we have
\begin{align}
|\p^j_xu^a|\leq|\p^j_x(u^a_e-Ay)|+(1-\chi(y))^2|\p^j_x {u}^a_p|+\chi^2(y)|\p^j_x \hat{u}^a_p|+|\p^j_xh|\ls \dl, \text{ for } y\in[1/4,1]. \label{uafour}
\end{align}
The same as \eqref{uathird}, we have $\|\p^j_x u^a_y\|_{L^\i}\leq  \e^{1/3}\delta$ for $y\in[0,1/4]$. Then by using the boundary condition $\p^j_xu^a|_{y=0}=0$ and mean value formula, we have
\begin{align}
|\p^j_xu^a|\leq \|\p^j_x u^a_y\|_{L^\i}y\leq \e^{1/3}\delta y, \text{ for }  y\in[0, 1/4]. \label{uafive}
\end{align}
The above two inequalities \eqref{uafour} and \eqref{uafive} indicate the second one of \eqref{approxdetailed}.

Estimates for $v^a$ in \eqref{approxdetailed1} is much simpler. first when $y\in [0,1/4]$
\bes
|\p^j_x v^a_y|=|\p^j_x \p_yv^a_e|+|\p^j_x \p_y\hat{v}^a_p|\leq \e\dl.
\ees
Using the boudary condition $\p^j_xv^a|_{y=0}$, mean value formula and incompressibility, we can see that
\begin{align}
|\p^j_x v^a|\leq\|\p^j_x\p_y v^a\|_{L^\i}y=\e\dl y, \text{ for } y\in[0,1/4].  \label{vafirst}
\end{align}
For $0 \leq j\in\bN$, by using \eqref{eulerapproxd} and \eqref{prandtlapprox2d}, we have
\begin{align}
|\p^j_x v^a|\leq |\p^j_x v^a_e|+(1-\chi(y))^2|\p^j_x {v}^a_p|+\chi^2(y)|\p^j_x \hat{v}^a_p|\ls \e\dl\ls \e\dl y, \text{ for } y\in[1/4,1]. \label{vasecond}
\end{align}
The above two inequalities \eqref{vafirst} and \eqref{vasecond} indicate the first one of \eqref{approxdetailed1}.

The second one of \eqref{approxdetailed1} is a direct consequence of the second one of \eqref{approxdetailed} and the incompressibility.
\end{proof}
}
Next, we present two types of the Hardy inequalities as follows.
\begin{lemma}
For function $f(y)\in C^1([0,1])$ with $f(1)=0$, we have
\be\label{hardy1}
\int^1_0 y^{\kappa} f^2 dy\leq \f{4}{(\kappa+1)^2}\int^1_0 y^{\kappa+2} (f_y)^2dy\q \text{for } \kappa>-1.
\ee
If additionally $f|_{y=0}=0$, we have,
\be\label{hardy2}
\int^1_0 \f{f^2}{y^{2}} dy \leq 4\int^1_0 (f_y)^2dy,\q \int^1_0 \f{f^2}{(1-y)^2} dy \leq 4\int^1_0 (f_y)^2dy.
\ee
\end{lemma}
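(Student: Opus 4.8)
The plan is to prove both \eqref{hardy1} and \eqref{hardy2} by the classical one-dimensional Hardy argument: rewrite the weight as an exact derivative, integrate by parts, kill the boundary terms using the hypotheses on $f$, and close by Cauchy--Schwarz. I would treat the weighted inequality \eqref{hardy1} first and then obtain \eqref{hardy2} by a minor variant of the same computation together with the reflection $y\mapsto 1-y$.

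For \eqref{hardy1}, since $\kappa>-1$ one may write $y^{\kappa}=\tfrac1{\kappa+1}\tfrac{d}{dy}\big(y^{\kappa+1}\big)$. Integrating by parts on $[\tau,1]$ (legitimate since $f\in C^1$) and letting $\tau\to0^+$, the boundary contribution at $y=1$ vanishes because $f(1)=0$, and the one at $y=\tau$, namely $\tfrac1{\kappa+1}\tau^{\kappa+1}f(\tau)^2$, tends to $0$ since $\kappa+1>0$ and $f$ is bounded. This gives
\[
\int_0^1 y^{\kappa}f^2\,dy=-\frac{2}{\kappa+1}\int_0^1 y^{\kappa+1}ff_y\,dy.
\]
Splitting $y^{\kappa+1}=y^{\kappa/2}\,y^{(\kappa+2)/2}$ and applying Cauchy--Schwarz yields
\[
\int_0^1 y^{\kappa}f^2\,dy\le\frac{2}{\kappa+1}\Big(\int_0^1 y^{\kappa}f^2\,dy\Big)^{1/2}\Big(\int_0^1 y^{\kappa+2}f_y^2\,dy\Big)^{1/2}.
\]
If the last integral is infinite the bound is trivial; otherwise the left-hand side is finite, so dividing by its square root and squaring produces exactly the constant $4/(\kappa+1)^2$ in \eqref{hardy1}.

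For the first inequality in \eqref{hardy2} the exponent $\kappa=-2$ is not covered by \eqref{hardy1}, but the same scheme applies with $y^{-2}=\tfrac{d}{dy}\big(-y^{-1}\big)$: integrating by parts on $[\tau,1]$ and sending $\tau\to0^+$, the term at $y=1$ equals $-f(1)^2\le0$ and may be discarded, while the term at $y=\tau$, namely $\tau^{-1}f(\tau)^2$, tends to $0$ because $f(0)=0$ together with $f\in C^1$ forces $f(y)=O(y)$ near $0$. Hence $\int_0^1 y^{-2}f^2\,dy\le 2\int_0^1 y^{-1}|f|\,|f_y|\,dy$, and Cauchy--Schwarz followed by the same division gives $\int_0^1 y^{-2}f^2\,dy\le 4\int_0^1 f_y^2\,dy$. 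The second inequality in \eqref{hardy2} follows by setting $g(z):=f(1-z)$, which lies in $C^1([0,1])$ with $g(0)=f(1)=0$; applying the bound just proved to $g$ and changing variables back gives $\int_0^1(1-y)^{-2}f^2\,dy\le 4\int_0^1 f_y^2\,dy$. There is no genuine obstacle in this lemma; the only care needed is the vanishing of the boundary contributions at $y=0$ --- ensured by $\kappa>-1$ for \eqref{hardy1} and by $f(0)=0$ for \eqref{hardy2} --- and the routine reduction to the case where the right-hand sides are finite.
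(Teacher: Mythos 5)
Your proof is correct and follows essentially the same route as the paper's: the identical integration by parts (writing the weight as an exact derivative), the same Cauchy--Schwarz splitting, and the same use of $f(1)=0$, $\kappa+1>0$, and $f(y)=O(y)$ to kill the boundary terms. Your extra care with the truncation at $\tau$, the finiteness reduction, and the explicit reflection argument for the second inequality in \eqref{hardy2} (which the paper omits as "similar") are all fine refinements of the same argument.
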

\begin{proof}
Using integration by parts, the Cauchy inequality and H\"{o}lder inequality, we have
\begin{align}
\int^1_0 y^{\kappa} f^2 dy=&\f{1}{1+\kappa}\int^1_0  f^2 dy^{\kappa+1}=-\f{1}{1+\kappa}\int^1_0 2y^{\kappa+1} f f_y dy\nn\\
                       \leq & \f{2}{1+\kappa}\lt(\int^1_0 y^{\kappa} f^2 dy\rt)^{1/2}\lt(\int^1_0 y^{\kappa+2} (f_y)^2 dy\rt)^{1/2},\label{hardyproof1}
\end{align}
which indicates \eqref{hardy1}. For the first one in \eqref{hardy2}, also by using integration by part, we have
\begin{align}
\int^1_0 y^{-2} f^2 dy=&-\int^1_0  f^2 dy^{-1}=- f^2(y)y^{-1}\Big|^{1}_0+2\int^1_0 y^{-1} f f_y dy. \label{hardyproof2}
\end{align}

Using the facts $f(0)=0$ and $f(y)\in C^1([0,1])$ and the mean value formula, we see that
\bes
|f(y)|\leq\|\p_y f\|_{L^\i}y\ls y,
\ees
which indicate that $f^2(y)y^{-1}\big|_{y=0}=0$, then the same as \eqref{hardyproof1} by using Cauchy inequality, we have the first one in \eqref{hardy2} from \eqref{hardyproof2}. The second one of \eqref{hardy2} is similar with the derivation of the first one, we omit the details.
\end{proof}

The positive estimate and the weighted energy estimate will be carried out in the framework of the stream function. Here we give a reformulation of system \eqref{e:error equation} in the stream function.

From the incompressibility and $\int^{2\pi}_0 vdx=0$, there exists a $x$-periodic stream function $\phi$ such that
\bes
\na^{\perp}\phi=(\p_y,-\p_x)\phi=(u,v),
\ees
Actually we can take
\bes
\phi(x,y):=\int^y_1 u(x,\bar{y})d\bar{y},
\ees
which satisfies $\phi(x,1)=0$. The boundary condition of $v|_{y=0,1}=0$ indicates that
\bes
\p_x\phi(x,0)=-v(x,0)=0,
\ees
which indicates that $\phi(x,0)=constant$.

Define the vorticity $\o=\p_xv-\p_y {u}$, then $-\Dl \phi=\o$. From the first two equations of \eqref{e:error equation}, we have
\begin{align}\label{e:error equation2}
\left\{
\begin{array}{lll}
\epsilon^2\Dl^2\phi+(u^a\p_x+v^a\p_y)\o+({u}\p_x+v\p_y)\o^a=\p_xR_v-\p_yR_u,\\[5pt]
{u}=\p_y\phi,\q v=-\p_x\phi,\q \o=\p_x v-\p_y{u},  \\[5pt]
\phi\big|_{y=1}=0,\q \phi(x,0)=\text{constant}, \q\p_y\phi\big|_{y=0,1}=0, \q \phi(x+2\pi,y)=\phi(x+2\pi,y),
 \end{array}
\right.
\end{align}
where $\o^a:=\p_x v^a-\p_y u^a$ and we have used the following fact
\bes
\p_xS_v-\p_yS_u=(u^a\p_x+v^a\p_y)\o+(u\p_x+v\p_y)\o^a.
\ees

\subsection{The $\boldsymbol{\e}$ independently positivity estimate}\label{subspositive}

\indent

In this subsection, we give the weighted $\e$ independently horizontal derivatives' estimate for the solution $(u,v)$. We have the following lemma.

\begin{lemma}\label{lempositive}
 Let $(u,v)$ be a smooth solution of (\ref{e:error equation}), then there exist $\epsilon_0>0, \dl_0>0$ such that for any $\epsilon\in (0,\epsilon_0), \dl\in(0,\dl_0)$, there holds
\begin{align}\label{estipositive}
&\int_{\bT\times[0,1]}\lt(yu^2_{x}+yv^2_x\rt)dxdy\nn\\
&\leq C\e^2\dl\int_{\bT\times[0,1]}\lt(u^2_{0,y}+yu^2_y\rt)dxdy+C\lt|\int_{\bT\times[0,1]} (R_u u_{x}+R_v v_{x})dxdy\rt|.
\end{align}
\end{lemma}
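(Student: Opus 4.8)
The plan is to multiply the vorticity equation \eqref{e:error equation2}$_1$ by a suitable multiplier that will produce, via integration by parts, a nonnegative quadratic form controlling $\int y(u_x^2+v_x^2)$, plus lower-order commutators that are absorbed either into the left-hand side (with a small factor coming from $\delta$) or into the basic energy terms $\e^2\int(u_{0,y}^2+yu_y^2)$ and the forcing. The natural multiplier in the stream-function framework is $\p_x\phi$ (up to sign), i.e. test against $-v = \p_x\phi$; since $u^a\approx y$ by Lemma \ref{lem2.3}, the transport term $(u^a\p_x + v^a\p_y)\o$ tested against $\p_x\phi$ should, after integrating by parts in $x$ and using $\o = \p_x v - \p_y u$, generate the positive quantity $\int u^a_x\,|\na^\perp\p_x\phi|^2$-type expressions together with $\int u^a (u_x^2+v_x^2)$; the weight $y$ emerges precisely because $u^a\thickapprox y$. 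More concretely, I would differentiate \eqref{e:error equation2}$_1$ once in $x$ and pair with $\p_x\phi$, or equivalently work directly with the momentum system \eqref{e:error equation}: multiply \eqref{e:error equation}$_1$ by $u_x$ and \eqref{e:error equation}$_2$ by $v_x$, integrate over $\O=\bT\times[0,1]$, and use incompressibility together with periodicity in $x$ to kill the pressure term ($\int (\p_x p\, u_x + \p_y p\, v_x) = \int \p_x(\p_x p\,\phi_?)\cdots = 0$ after integration by parts in $x$, using $\p_x u_x + \p_y v_x = 0$). This is exactly the identity \eqref{linear1} sketched in the introduction; here it is being proved rather than just quoted.

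The key steps, in order, are: (i) establish the integration-by-parts identity turning $\int(-\e^2\Dl u\, u_x - \e^2\Dl v\, v_x)$ into $\e^2$ times a divergence-type / boundary term that vanishes (using $u=v=0$ on $y=0,1$ and periodicity), so the viscous term contributes nothing problematic; (ii) handle the main transport piece $\int(u^a\p_x u\, u_x + u^a\p_x v\, v_x) = \int u^a(u_x^2 + v_x^2)$, and use $u^a\thickapprox y$ from \eqref{approxdetailed} to bound this below by $c\int y(u_x^2+v_x^2)$ — this furnishes the left-hand side of \eqref{estipositive}; (iii) bound the error terms $\int(v^a\p_y u\, u_x + u\p_x u^a\, u_x + u\p_x v^a\, v_x + v\p_y v^a\, v_x)$. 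For step (iii) I would decompose $u = u_0 + \bar u$ as in the introduction, use $|\p_x u^a|\ls \delta y$, $|v^a| + |\p_x v^a| \ls \e\delta y$, $|\p_x v^a_y|\ls \delta y$ from Lemma \ref{lem2.3}, then apply Cauchy–Schwarz, the Poincaré inequality in $x$ for $\bar u$ (legitimate since $\int_{\bT}\bar u\,dx = 0$), and the Hardy inequalities \eqref{hardy1}–\eqref{hardy2} to convert $\int u_0^2/y^2$ and $\int u_0^2/(1-y)^2$ into $\int u_{0,y}^2$. The upshot is that every error term is bounded by either $C\delta\int y(u_x^2 + v_x^2)$ (absorbable on the left for $\delta_0$ small) or $C\e^2\delta\int(u_{0,y}^2 + yu_y^2)$; (iv) the forcing term $\int(R_u u_x + R_v v_x)$ is left as is on the right-hand side, and the nonlinear part hidden in $R_u, R_v$ at this stage is simply carried along.

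The main obstacle I expect is step (iii), specifically the term $\int u\,\p_x u^a\, u_x\,dxdy$ and the transport term $\int v^a\p_y u\, u_x\,dxdy$: here $u_y$ appears undifferentiated-in-$x$ while the left side of \eqref{estipositive} only controls $yu_x^2$ and $yv_x^2$, so one cannot simply Cauchy–Schwarz symmetrically. The resolution — following the introduction's sketch — is that $v^a$ carries an extra factor $\e$ (indeed $|v^a|\ls \e\delta y$), so $\int v^a \p_y u\, u_x \ls \e\delta\big(\int y u_y^2\big)^{1/2}\big(\int y u_x^2\big)^{1/2}$, and after Cauchy's inequality with a weight the $\int yu_y^2$ part comes with the small prefactor $\e^2\delta$ and lands in the first right-hand term of \eqref{estipositive}, while the $\int yu_x^2$ part has prefactor $\delta$ and is absorbed left. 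For $\int u\,\p_x u^a\, u_x$ one integrates by parts in $x$ first: $\int u\,\p_x u^a\,u_x = -\tfrac12\int u^a_{xx}\,u^2$, then splits $u^2 = u_0^2 + 2u_0\bar u + \bar u^2$, notes the $u_0^2$ term drops (as $u^a_{xx}$ is $\p_x$-periodic mean zero and $u_0$ is $x$-independent), and controls the rest with $|u^a_{xx}|\ls \e\delta(\,\cdot\,)$ plus Hardy/Poincaré. A final bookkeeping point is that the boundary terms in all the $x$-integrations by parts vanish by periodicity, and those in any $y$-integration by parts vanish because $u, v$ (hence $\phi, \p_y\phi$) vanish on $y=0,1$; I would check these at each step but expect no surprises there.
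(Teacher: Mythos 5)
Your overall architecture is the same as the paper's: the paper carries out exactly your first option, testing the stream-function form \eqref{e:error equation3} against $\p_x\phi$ (which is equivalent to pairing the momentum equations with $(u_x,v_x)$, with the pressure cancelling as you note), extracting positivity from $u^a\thickapprox y$, killing the viscous term by periodicity and the boundary conditions, and absorbing the commutators via the $u=u_0+\bar u$ splitting, Hardy in $y$, Poincar\'e in $x$, and $\dl$-smallness. Your identification of $\int u\,\p_x u^a\,u_x$ and $\int v^a\p_y u\,u_x$ as the delicate terms, and your resolutions of them, match the paper's $I_2$ and \eqref{errorpositive7}--\eqref{errorpositive9}.

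There is, however, one term missing from your list in step (iii): $\int v\,\p_y u^a\,u_x\,dxdy$ (coming from $v\p_y u^a$ in $S_u$; the companion $\int v^a\p_y v\,v_x$ is also dropped but is harmless). This term is not covered by the tools you cite, because $\p_y u^a$ is \emph{not} small: it equals $A+O(1)$ in the bulk and is of size $\dl\e^{-1}$ inside the upper boundary layer, where $\p_y=\e^{-1}\p_\zeta$. Two additional ingredients are needed: (a) for the constant part, the exact cancellation $A\int v\,u_x=-A\int v\,v_y=-\tfrac{A}{2}\int\p_y(v^2)=0$ using $v|_{y=0,1}=0$; and (b) for the layer part, the $\langle\zeta\rangle$-weighted decay of $u_p^a$ from \eqref{prandtlapprox1d} so that $(1-y)\,|\p_y u_p^a|=|\zeta\,\p_\zeta u_p^a|\ls\dl$, combined with a Hardy inequality in $1-y$ applied to the localized quantity $(1-\chi(y))v$ to convert $\int v^2/(1-y)^2$ into $\int y\,v_y^2=\int y\,u_x^2$. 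The paper sidesteps (a) entirely by working with the vorticity form, where only $\Dl u^a$ appears (annihilating $Ay$), and implements (b) in \eqref{errorpositive4}--\eqref{errorpositive4p}; if you stay with the momentum form, you must supply both steps explicitly, since without (a) the term $A\int v\,u_x$ is not small and without (b) the layer contribution is $O(\dl\e^{-1})$ rather than $O(\dl)$.
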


\pf Using the divergence-free condition of the approximation solution, we have
\bes
\p_x \o^a=\Dl v^a,\q \p_y \o^a=-\Dl u^a.
\ees
Inserting this into \eqref{e:error equation2}, we have
\be\label{e:error equation3}
\left\{
\bali
&\epsilon^2\Dl^2\phi-u^a\p_x\Dl\phi-v^a\p_y\Dl\phi+\p_y\phi\Dl v^a+\p_x\phi\Dl u^a=\p_xR_v-\p_yR_u,\\[5pt]
&\phi\big|_{y=0}=\text{const.},\q \phi\big|_{y=1}=0,\q \p_y\phi\big|_{y=0,1}=0, \q \phi(x+2\pi,y)=\phi(x+2\pi,y).
 \eali
\right.
\ee
Now we multiply \eqref{e:error equation3}$_1$ by $\p_x\phi$ and integrate on $\bT\times[0,1]$ to obtain that
\begin{align}
&-\int_{\bT\times[0,1]}u^a\Dl\p_x\phi \p_x\phi \nn\\
=&\underbrace{-\epsilon^2\int_{\bT\times[0,1]}\Dl^2\phi \p_x\phi}_{I_1}+\underbrace{\int_{\bT\times[0,1]}\lt(v^a\p_y\Dl\phi-\p_y\phi\Dl v^a\rt)\p_x\phi}_{I_2}\label{errorpositive1}\\
 &\underbrace{-\int_{\bT\times[0,1]}\p_x\phi\Dl u^a \p_x\phi}_{I_3}+\underbrace{\int_{\bT\times[0,1]} (\p_xR_v-\p_yR_u)\p_x\phi}_{I_4}.\nn
\end{align}
Here and later, for simplicity of notations, we omit the integrating variables $dxdy$ if no ambiguity is caused.  For the left-hand of \eqref{errorpositive1}, by using integration by parts, we can have
\begin{align}
-\int_{\bT\times[0,1]}u^a\Dl\p_x\phi \p_x\phi=&\int_{\bT\times[0,1]} u^a(\phi_{xx})^2+ u^a_x \phi_{xx}\phi_x+\int_{\bT\times[0,1]} u^a(\phi_{xy})^2+ u^a_y \phi_{xy}\phi_x\nn\\
       =&\int_{\bT\times[0,1]} u^a\lt[(\phi_{xx})^2+(\phi_{xy})^2\rt]-\f{1}{2}\int_{\bT\times[0,1]} \lt(u^a_{xx}+ u^a_{yy}\rt) (\phi_{x})^2. \label{errorpositive2}
\end{align}

From the second estimate in \eqref{approxdetailed}, we have
\begin{align}
\lt|\int_{\bT\times[0,1]} u^a_{xx} (\phi_{x})^2\rt|\ls &\dl\int_{\bT\times[0,1]} y(\phi_{x})^2\nn\\
\ls & \dl\int_{\bT\times[0,1]} y^{3} (\phi_{xy})^2\ls  \dl\int_{\bT\times[0,1]} y (\phi_{xy})^2.\label{errorpositive4}
\end{align}
Here in the last line, we use Hardy inequality \eqref{hardy1} since $\phi_x|_{y=1}=0$.

Also from Lemma \ref{lemdetail}, we have
\begin{align}
&\lt|\int_{\bT\times[0,1]}  u^a_{yy} (\phi_{x})^2\rt|\nn\\
\ls & \int_{\bT\times[0,1]} (|(u^a_e-Ay)_{yy}|+|[(1-\chi(y))^2u^a_p]_{yy}|+|[\chi^2(y)\hat{u}^a_p]_{yy}|+h_{yy}) (\phi_{x})^2\nn\\
\ls & \int_{\bT\times[0,1]} \lt\{\e\dl  (\phi_{x})^2 +(1-y)^2|[(1-\chi(y))^2u^a_p]_{yy}| \f{(\phi_{x})^2}{(1-y)^2}+y^{1/2}|[\chi^2(y)\hat{u}^a_p]_{yy}|y^{-1/2}(\phi_{x})^2\rt\}\nn\\
\ls & \int_{\bT\times[0,1]} \lt\{\e\dl  (\phi_{x})^2 +\zeta^2|[(1-\chi(y))^2u^a_p]_{\zeta\zeta}| \f{(\phi_{x})^2}{(1-y)^2}+\eta^{1/2}\e^{-1}|[\chi^2(y)\hat{u}^a_p]_{\eta\eta}|y^{-1/2}(\phi_{x})^2\rt\}\nn\\
\ls & \int_{\bT\times[0,1]} \lt\{\e\dl  (\phi_{x})^2 +\dl (\phi_{x})^2+  \dl \f{(\phi_{x})^2(1-\chi(y))^2}{(1-y)^2}+\dl y^{-1/2}(\phi_{x})^2\rt\}\nn\\
 \ls &  \dl\int_{\bT\times[0,1]} y (\phi_{xy})^2, \label{errorpositive4p}
\end{align}
 In the last line, we used Hardy inequality \eqref{hardy1} and \eqref{hardy2} since $\phi_x|_{y=0,1}=0$.

Inserting \eqref{errorpositive4} and \eqref{errorpositive4p} into \eqref{errorpositive2}, we can obtain that

\begin{align}
&-\int_{\bT\times[0,1]}u^a\Dl\p_x\phi \p_x\phi \nn\\
&\geq \int_{\bT\times[0,1]} u^a\lt[(\phi_{xx})^2+(\phi_{xy})^2\rt]-C\dl\int_{\bT\times[0,1]}y (\phi_{xy})^2. \label{errorpositive4f}
\end{align}

Now we estimate the righthand of \eqref{errorpositive1} term by term.

{\noindent\bf Term $I_1$}: By using integration by parts and the boundary condition $(\phi_x,\phi_{xy})|_{y=0,1}=0$, it is easy to see that
\be\label{errorpositive4s}
I_1=\f{\e^2}{2}\int_{\bT\times[0,1]}\lt((\phi_{xx})^2+(\phi_{yy})^2\rt)_x =0.
\ee

{\noindent\bf Term $I_2$}:  By integration by parts and the boundary condition $v^a|_{y=0,1}=0$ and $\p_x\phi|_{y=0,1}=0$, we have
\begin{align}
I_2=&-\int_{\bT\times[0,1]}\phi_{xy}\p_x(v^a\phi_x)+\phi_{yy}\p_y(v^a\phi_x)\nn\\
    &+\int_{\bT\times[0,1]}v^a_{x}\p_x(\phi_y\phi_x)+v^a_{y}\p_y(\phi_y\phi_x).\nn
\end{align}
After cancellation, using integration by parts and incompressibility of $u^a_x+v^a_y=0$, we can obtain that
\begin{align}
I_2=&-\int_{\bT\times[0,1]}\phi_{xy}v^a\phi_{xx}+\phi_{yy}v^a\phi_{xy}+\int_{\bT\times[0,1]}v^a_{x}\phi_y\phi_{xx}+v^a_{y}\phi_y\phi_{xy}\nn\\
    \leq& \int_{\bT\times[0,1]} |(v^a,v^a_x)| |(\phi_{y},\phi_{xy})||(\phi_{xx},\phi_{yy})|-\f{1}{2}\int_{\bT\times[0,1]}v^a_{xy} (\phi_y)^2\nn\\
    \leq& \int_{\bT\times[0,1]} |(v^a,v^a_x)| |(\phi_{y},\phi_{xy})||(\phi_{xx},\phi_{yy})|+\f{1}{2}\int_{\bT\times[0,1]}u^a_{xx} (\phi_y)^2.\nn
\end{align}

Then by using the first one of \eqref{approxdetailed1} and the Cauchy inequality, we can achieve that
\begin{align}
I_2\leq& \dl\int_{\bT\times[0,1]} y \lt[(\phi_{xx})^2+(\phi_{xy})^2\rt]+\e^2\dl\int_{\bT\times[0,1]}y(\phi_{yy})^2+\f{1}{2}\int_{\bT\times[0,1]}u^a_{xx} (\phi_y)^2.\label{errorpositive6}
\end{align}

Now we come to estimate the third term on the right hand of \eqref{errorpositive6}.  Denote
\bes
\phi_0=\f{1}{2\pi}\int^{2\pi}_0\phi dx,\q \bar{\phi}=\phi-\phi_0.
\ees
Then we have
\begin{align}
\f{1}{2}\int_{\bT\times[0,1]}u^a_{xx} (\phi_y)^2=&\f{1}{2}\int_{\bT\times[0,1]}u^a_{xx} (\phi^2_{0,y}+2\phi_{0,y}\bar{\phi}_y+\bar{\phi}^2_y)\nn\\
   =&\f{1}{2}\int_{\bT\times[0,1]}u^a_{xx} (2\phi_{0,y}\bar{\phi}_y+\bar{\phi}^2_y),\label{errorpositive7}
\end{align}
where in the last line, by using the fact that $\phi_0$ is independent of $x$ and periodicity of $x$, we have
\bes
\int_{\bT\times[0,1]} u^a_{xx}\phi^2_{0,y}=0.
\ees

Using the estimates in Lemma \ref{lemdetail}, Cauchy inequality, Hardy inequality \eqref{hardy2} and Poincar\'{e} inequality in $x$ direction, we have
\begin{align}
&\int_{\bT\times[0,1]}u^a_{xx} \phi_{0,y}\bar{\phi}_y\nn\\
=& \int_{\bT\times[0,1]}\lt\{(u^a_e-Ay)_{xx}+(1-\chi(y))^2(u^a_p)_{xx}+\chi^2(y)(\hat{u}^a_p)_{xx}+h_{xx}\rt\} \phi_{0,y}\bar{\phi}_y\nn\\
\ls& \int_{\bT\times[0,1]}\lt\{\e\dl+\e^8 \rt\}|\phi_{0,y}\bar{\phi}_y|+(1-y)(1-\chi(y))^2(u^a_p)_{xx} \f{\phi_{0,y}}{1-y}\bar{\phi}_y\nn\\
\ls& \e\dl \int_{\bT\times[0,1]}y^{-1}|\phi_{0,y}|y|\bar{\phi}_y| +y\f{|\phi_{0,y}|}{1-y}|\bar{\phi}_y|\nn\\
   \ls & \e^2\dl \int_{\bT\times[0,1]}(y^{-1}\phi_{0,y})^2+((1-y)^{-1}\phi_{0,y})^2+\dl\int_{\bT\times[0,1]}y^2(\bar{\phi}_y)^2\label{errorpositive8}\\
   \ls &\e^2\dl \int_{\bT\times[0,1]}(\phi_{0,yy})^2+\dl\int_{\bT\times[0,1]}y({\phi}_{xy})^2. \nn
\end{align}

Using the second one of \eqref{approxdetailed} and the Poincar\'{e} inequality in $x$ direction, we have
\begin{align}
\f{1}{2}\int_{\bT\times[0,1]}u^a_{xx} (\bar{\phi}_y)^2\ls& \dl \int_{\bT\times[0,1]}y\bar{\phi}^2_y\ls \dl \int_{\bT\times[0,1]}y{\phi}^2_{xy}. \label{errorpositive9}
\end{align}
Inserting the above estimates \eqref{errorpositive8}, \eqref{errorpositive9} into \eqref{errorpositive7}, and then into \eqref{errorpositive6}, we can obtain that
\begin{align}
I_2\leq& \dl\int_{\bT\times[0,1]} y \lt[(\phi_{xx})^2+(\phi_{xy})^2\rt]+\e^2\dl\int_{\bT\times[0,1]}y(\phi_{yy})^2\nn\\
       &+\e^2\dl \int_{\bT\times[0,1]}(\phi_{0,yy})^2.\label{errorpositive10}
\end{align}

{\noindent\bf Term $I_3$}: This one has been done in \eqref{errorpositive4} and \eqref{errorpositive4p}.

{\noindent\bf Term $I_4$}: Using integration by parts, we have
\begin{align}
I_4=\int_{\bT\times[0,1]} (\p_xR_v-\p_yR_u)\p_x\phi=&\int_{\bT\times[0,1]} R_u\phi_{xy}-R_v\phi_{xx}\nn\\
                                    =&\int_{\bT\times[0,1]} R_u u_x+R_vv_{x}.\label{errorpositive12}
\end{align}

Now combining all the estimates in \eqref{errorpositive1}, \eqref{errorpositive4f}, \eqref{errorpositive4s}, \eqref{errorpositive10}, and \eqref{errorpositive12}, we can obtain that

\begin{align}
&\int_{\bT\times[0,1]} (u^a-C\dl y)\lt[(\phi_{xx})^2+(\phi_{xy})^2\rt] \nn\\
&\leq C\e^2\dl\int_{\bT\times[0,1]}y(\bar{\phi}_{yy})^2+\e^2\dl \int_{\bT\times[0,1]}(\phi_{0,yy})^2+C\int_{\bT\times[0,1]} R_uu_{x}+R_v v_{x}. \label{errorpositive13}
\end{align}
Noting that
\bes
\phi_{xx}=-v_x,\q \phi_{xy}=u_x,\q \phi_{yy}=u_y,
\ees
and the first one of \eqref{approxdetailed}, the above inequality \eqref{errorpositive13} is actually \eqref{estipositive} by choosing suitably small $\dl_0$. \qed

In order to close the linear estimate, we still need to give the control of the first term on the right hand of \eqref{estipositive}, which will be achieved by the following lemma through basic energy estimates.
\subsection{Energy estimates}\label{subsenergy}
\begin{lemma}\label{lemlinearenergy}
 Let $(u,v)$ be a smooth solution of (\ref{e:error equation}), then there exist $\epsilon_0>0, \dl_0>0$ such that for any $\epsilon\in (0,\epsilon_0), \dl\in(0,\dl_0)$, there holds
\begin{align}\label{estilinearenergy}
&\e^2\int_{\bT\times[0,1]}\lt(u^2_{0,y}+yu^2_y\rt)dxdy\nn\\
&\leq C\int_{\bT\times[0,1]}\lt(yu^2_{x}+yv^2_x\rt)dxdy+C\int_{\bT\times[0,1]} \lt(y^\kappa R^2_u+yR^2_v\rt)+C\lt|\int_{\bT\times[0,1]} R_u u_0\rt|.
\end{align}
Here $\kappa\in[0,1)$.
\end{lemma}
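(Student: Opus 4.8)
The plan is to run a basic (unweighted) energy estimate for the system \eqref{e:error equation}, splitting $u=u_0+\bar u$ into its $x$-mean and its zero-$x$-mean part. Multiply \eqref{e:error equation}$_1$ by $u$, \eqref{e:error equation}$_2$ by $v$, add, and integrate over $\bT\times[0,1]$. The Laplacian terms give $\e^2\int(|\na u|^2+|\na v|^2)$ after integration by parts, using the Dirichlet boundary conditions $u|_{y=0,1}=v|_{y=0,1}=0$ and $x$-periodicity. The pressure term $\int(\p_xp\,u+\p_yp\,v)=-\int p(\p_xu+\p_yv)=0$ by incompressibility. So the left side is $\e^2\int(|\na u|^2+|\na v|^2)$, and in particular controls $\e^2\int u_y^2\geq \e^2\int(yu_y^2)$ and, via the Poincar\'e inequality in $y$ applied to $u_0(y)$ with $u_0(0)=u_0(1)=0$ (note $\int\bar u_y^2\geq$ a constant times $\int u_{0,y}^2$ is false, but rather $\int|\na u|^2\geq \int u_{0,y}^2$ directly since $\int_{\bT}u_y^2\,dx=2\pi u_{0,y}^2+\int_{\bT}\bar u_y^2\,dx\geq 2\pi u_{0,y}^2$), controls $\e^2\int u_{0,y}^2$. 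Thus the left side of \eqref{estilinearenergy} is bounded by the full energy.

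Next I would estimate $\int(S_u u+S_v v)$. The terms $\int(u^a\p_xu\,u+v^a\p_yu\,u+u^a\p_xv\,v+v^a\p_yv\,v)$: integrating by parts and using $\p_xu^a+\p_yv^a=0$ together with the boundary/periodicity conditions, these combine into $-\tfrac12\int(\p_xu^a+\p_yv^a)(u^2+v^2)=0$. The remaining transport-by-error terms are $\int(u\p_xu^a\,u+v\p_yu^a\,u+u\p_xv^a\,v+v\p_yv^a\,v)$. For $\int u^2\p_xu^a$, write $u^2=u_0^2+2u_0\bar u+\bar u^2$; the $u_0^2$ piece integrates to zero in $x$ after noting $\int_{\bT}\p_xu^a\,dx=0$, leaving $\int\p_xu^a(2u_0\bar u+\bar u^2)$, which by $|\p_xu^a|\lesssim\dl y$ (Lemma \ref{lem2.3}), Cauchy--Schwarz, the Hardy inequality $\int u_0^2/y^2\lesssim\int u_{0,y}^2$ (using $u_0(0)=0$), and the Poincar\'e inequality in $x$ for $\bar u$ (so $\int\bar u^2 y\lesssim\int\bar u_x^2 y\leq\int u_x^2 y$) is bounded by $C\dl\int yu_x^2+C\e^2\dl\int u_{0,y}^2$ — wait, the $u_0^2$ factor picks up $\dl^2$ not $\e^2\dl$; more carefully $\int\p_xu^a\cdot 2u_0\bar u\lesssim\dl\int y|u_0||\bar u|\lesssim\dl(\int u_0^2/y^0\cdot\ldots)$; I would instead bound it as $\dl\int|u_0|\,y|\bar u|\le\dl(\int u_0^2)^{1/2}(\int y^2\bar u^2)^{1/2}\lesssim\dl\int u_{0,y}^2+\dl\int y\bar u^2$, absorbing the first into the left side (since $\dl\ll\e^2$ for $\dl_0$ small — if not, one keeps $\dl\int u_{0,y}^2$ and absorbs since it is $\dl/\e^2$ times the left side). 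The $v^a$-terms use $|v^a|+|v^a_x|\lesssim\e\dl y$ and $|v^a_y|\lesssim\dl y$: e.g. $\int v\p_yv^a\,v\lesssim\dl\int y v^2\lesssim\dl\int y v_x^2$ by Poincar\'e in $x$ (since $v$ has zero $x$-mean), and $\int v\p_yu^a\,u$ needs care because $\p_yu^a\approx A$ is $O(1)$; here I would integrate by parts in $x$ using $v=-\p_x\phi$, or rather write $\int v u\p_yu^a = \int v u (A + O(\e^{1/3}\dl))$; the $A$-piece is $A\int vu$, and using $v=-\p_x\phi$, $u=\p_y\phi$ one gets $A\int-\p_x\phi\,\p_y\phi=A\int\tfrac12\p_y(\p_x\phi\cdot\phi)-\ldots$; actually $\int vu=\int-\p_x\phi\,\p_y\phi$, and integrating by parts in $x$ gives $\int\phi\,\p_x\p_y\phi = -\int\phi_y\phi_x$... this is circular, so instead use $\int uv=\int u(-\p_x\phi)$ and integrate by parts in $x$: $=\int\p_x u\,\phi=\int u_x\phi$, then Cauchy--Schwarz with a $y$-weight and Hardy/Poincar\'e to land on $C\int yu_x^2$ plus controllable terms. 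The forcing terms give $\int R_u u+\int R_v v=\int R_u u_0+\int R_u\bar u+\int R_v v$; the last two are handled by Cauchy--Schwarz with weight $y^\kappa$ resp. $y$ and Hardy/Poincar\'e ($\int\bar u^2/y^\kappa\lesssim\int\bar u_x^2$ for... no — one uses $\int y^\kappa R_u^2$ paired against $\int y^{-\kappa}\bar u^2\lesssim\int\bar u_y^2\lesssim$ left side, choosing $\kappa<1$ for Hardy to apply), while $\int R_u u_0$ is kept as is on the right side.

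Collecting everything, one obtains
\begin{align*}
\e^2\int_{\bT\times[0,1]}\bigl(u_{0,y}^2+yu_y^2\bigr)
&\le \e^2\int_{\bT\times[0,1]}\bigl(|\na u|^2+|\na v|^2\bigr)\\
&\le C\dl\int_{\bT\times[0,1]}\bigl(yu_x^2+yv_x^2\bigr)+C\dl\,\e^2\int_{\bT\times[0,1]}u_{0,y}^2+\text{(a fraction of the left side)}\\
&\quad+C\int_{\bT\times[0,1]}\bigl(y^\kappa R_u^2+yR_v^2\bigr)+C\Bigl|\int_{\bT\times[0,1]}R_uu_0\Bigr|,
\end{align*}
and then for $\dl_0,\e_0$ small enough the terms proportional to the left side are absorbed, giving \eqref{estilinearenergy}. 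The main obstacle is the term $\int v\,\p_yu^a\,u$: since $\p_yu^a\approx A$ does not carry a smallness factor $\dl$ or a vanishing weight $y$, it cannot be absorbed naively, and one must exploit the structure — integrating by parts in $x$ to trade $v=-\p_x\phi$ for $u_x=\p_{xy}\phi$ and thereby convert it into the weighted horizontal-derivative quantity $\int yu_x^2$ already present on the right, at the cost of Hardy/Poincar\'e inequalities to absorb the leftover $u$, $\bar u$ terms. Equivalently this is the same mechanism that in the positive estimate (Lemma \ref{lempositive}) let the $u^a_{xx}(\phi_y)^2$ term be controlled; here it reappears as the delicate interplay between the $O(1)$ part of $u^a$ and the energy, and handling it cleanly — in particular keeping track of which pieces are $O(\dl)$, which are $O(\e^2\dl)$, and which must be absorbed into the left side — is the crux of the computation.
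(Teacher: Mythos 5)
Your approach diverges from the paper's in a way that does not close. The paper proves this lemma in two separate pieces: the weighted estimate $\e^2\int y\bar\phi_{yy}^2$ is obtained by multiplying the stream-function equation \eqref{e:error equation3}$_1$ by the \emph{weighted} multiplier $y\bar\phi$ (Section \ref{subsenergy1}), and the zero-mode estimate $\e^2\int u_{0,y}^2$ by multiplying \eqref{e:error equation}$_1$ by $u_0$. You instead propose the unweighted velocity multiplier $(u,v)$. The difficulty you flag --- the term $\int uv\,\p_yu^a$ with $\p_yu^a\approx A=O(1)$ --- is indeed the crux, but the integration-by-parts/Hardy route you sketch lands exactly on the borderline Hardy exponent and does not go through.

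Concretely, after using $\int u_0 v\,dxdy=0$ and integrating by parts in $x$, the troublesome piece is $A\int u_x\bar\phi$, and you need $|\int u_x\bar\phi|\lesssim\int yu_x^2$. Splitting the Cauchy--Schwarz weights as $|\int u_x\bar\phi|\leq(\int y^{1-2\mu}u_x^2)^{1/2}(\int y^{-1+2\mu}\bar\phi^2)^{1/2}$, the first factor is $\leq\int yu_x^2$ only if $\mu\leq 0$, but to estimate the second factor by Hardy \eqref{hardy1} (which needs exponent $-1+2\mu>-1$) followed by Poincar\'e in $x$ back to $\int yu_x^2$ you need $\mu>0$; at $\mu=0$ the Hardy constant $4/(\kappa+1)^2$ in \eqref{hardy1} blows up. Any alternative pairing either leaves behind an unweighted $\int u_x^2$, or an unweighted $\int u_y^2$, or --- via Hardy on $\bar u$ --- a term $\nu\int yu_y^2$ \emph{without} the $\e^2$ prefactor, none of which can be absorbed by the left side. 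This is exactly why the paper multiplies by $y\bar\phi$ and not by $\bar\phi$: the explicit $y$ on the multiplier carries a $y$-weight through every integration by parts, so the analogous term appears there as $\int u^a_y\,y\,\bar\phi_y\bar\phi_x\approx A\int y\bar u v$, and $|\int y\bar u v|\leq(\int y\bar u^2)^{1/2}(\int yv^2)^{1/2}\lesssim\int yu_x^2$ closes cleanly at \emph{non}-degenerate Hardy/Poincar\'e exponents.

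A secondary point: the expansion $\p_yu^a=A+O(\e^{1/3}\dl)$ in your sketch is only valid on $y\in[0,1/4]$; near $y=1$ the upper boundary layer contributes $\p_\zeta u^a_p/\e=O(\dl/\e)$ to $\p_yu^a$, so that splitting cannot be used globally (though that part can be repaired using the vanishing of $(u,v)$ at $y=1$). The borderline Hardy obstruction on the $A$-piece is the essential gap; to fix it you would have to reproduce, in one form or another, the paper's weighted multiplier.
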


\pf The proof is divided into two parts. The first part deals with  the weighted estimate of the non-zero frequency of $u$, namely, $\bar{u}$, and the other part takes care of the non weighted estimate of zero frequency of $u$, namely, $u_0$.

\subsubsection{Energy estimate I: $\e^2\int_{\bT\times[0,1]}y\bar{u}^2_{y}dxdy$}\label{subsenergy1}
\indent

Now we multiply \eqref{e:error equation3}$_1$ by $y\bar{\phi}$ and integrate the resulted equation on $\bT\times[0,1]$ to obtain that
\begin{align}
&\epsilon^2\int_{\bT\times[0,1]}\Dl^2\phi y\bar{\phi} \nn\\
=&\underbrace{\int_{\bT\times[0,1]}\lt(u^a\Dl\phi_x-\phi_x\Dl u^a\rt) y\bar{\phi}}_{J_1}+\underbrace{\int_{\bT\times[0,1]}\lt(v^a\p_y\Dl\phi-\p_y\phi\Dl v^a\rt) y\bar{\phi}}_{J_2}\label{errorenergy1}\\
 &+\underbrace{\int_{\bT\times[0,1]} (\p_xR_v-\p_yR_u)y\bar{\phi}}_{J_3}.\nn
\end{align}
For the left-hand of \eqref{errorenergy1}, by decomposing $\phi=\phi_0+\bar{\phi}$, we can have
\begin{align}
\e^2\int_{\bT\times[0,1]}\Dl^2\phi y\bar{\phi}=&\e^2\int_{\bT\times[0,1]}(\phi_{xxxx}+2\phi_{xxyy}+\phi_{yyyy}) y\bar{\phi}\nn\\
=&\e^2\int_{\bT\times[0,1]}(\bar{\phi}_{xxxx}+2\bar{\phi}_{xxyy}+\bar{\phi}_{yyyy}) y\bar{\phi}.\nn
\end{align}
Here we have used the following facts
\begin{align*}
\phi_{0,xxxx}=\phi_{0,xxyy}=0,\q \int_{\bT\times[0,1]}\phi_{0,yyyy}y\bar{\phi}=\int^1_0\phi_{0,yyyy}ydy\int_{\bT}\bar{\phi}dx=0.
\end{align*}
Then integration by parts indicates that
\begin{align}
&\e^2\int_{\bT\times[0,1]}\Dl^2\phi y\bar{\phi}\nn\\
&=\e^2\int_{\bT\times[0,1]}y\bar{\phi}^2_{xx}-2\e^2\int_{\bT\times[0,1]}\bar{\phi}_{xxy}(\bar{\phi}+y\bar{\phi}_y)-\e^2\int_{\bT\times[0,1]}\bar{\phi}_{yyy}(\bar{\phi}+y\bar{\phi}_y)\nn\\
&=\e^2\int_{\bT\times[0,1]}y\bar{\phi}^2_{xx}+2\e^2\int_{\bT\times[0,1]}\bar{\phi}^2_{xy}y+\e^2\int_{\bT\times[0,1]}\bar{\phi}_{yy}\bar{\phi}_y-\e^2\int_{\bT\times[0,1]}\bar{\phi}_{yyy}y\bar{\phi}_y\nn\\
&=\e^2\int_{\bT\times[0,1]}y\bar{\phi}^2_{xx}+2\e^2\int_{\bT\times[0,1]}\bar{\phi}^2_{xy}y+\e^2\int_{\bT\times[0,1]}\bar{\phi}^2_{yy}y\nn\\
&=\e^2\int_{\bT\times[0,1]}y{\phi}^2_{xx}+2\e^2\int_{\bT\times[0,1]}{\phi}^2_{xy}y+\e^2\int_{\bT\times[0,1]}\bar{\phi}^2_{yy}y.\label{errorenergy2}
\end{align}
Here we have used the following facts
\bes
\bar{\phi}|_{y=0,1}=\bar{\phi}_x|_{y=0,1}=\bar{\phi}_y|_{y=0,1}=0.
\ees

Now we give estimates of the right hand terms of \eqref{errorenergy1}.

{\noindent\bf Term $J_1$}:  By integration by parts, we have
\begin{align}
J_1=&\int_{\bT\times[0,1]}\phi_{x}\lt[(u^a y\bar{\phi})_{xx}-u^a_{xx}y\bar{\phi}\rt]+\int_{\bT\times[0,1]}\phi_{x}\lt[(u^a y\bar{\phi})_{yy}-u^a_{yy}y\bar{\phi}\rt]\nn\\
   =&\int_{\bT\times[0,1]}\bar{\phi}_{x}\lt[u^a y\bar{\phi}_{xx}+2yu^a_x\bar{\phi}_x\rt]-\int_{\bT\times[0,1]}u^a \lt[ (y\bar{\phi})_{yy}\bar{\phi}_x+2(y\bar{\phi})_y\bar{\phi}_{xy}\rt].\nn
\end{align}
Using the basic fact that in \eqref{approxdetailed} and the Cauchy inequality, we can obtain that

\begin{align}
J_1
   \ls &\int_{\bT\times[0,1]} |\bar{\phi}_{x}| \lt|(y\bar{\phi}_{xx},y\bar{\phi}_{x})\rt|
   +\int_{\bT\times[0,1]}\lt|(y\bar{\phi}_y,\bar{\phi})\rt|\lt|(\bar{\phi}_x,y\bar{\phi}_{xy})\rt| - \int_{\bT\times[0,1]} u^a y\bar{\phi}_{yy}\bar{\phi}_x \nn\\
   \ls& \int_{\bT\times[0,1]} \bar{\phi}^2_{x}+\int_{\bT\times[0,1]}\lt[y \lt(\bar{\phi}^2_{xx}+\bar{\phi}^2_{x}+\bar{\phi}^2_y\rt)+\bar{\phi}^2\rt]\nn\\
   &+\int_{\bT\times[0,1]} \lt[y\bar{\phi}^2_{y}+\bar{\phi}^2\rt]+\int_{\bT\times[0,1]} y\bar{\phi}^2_{xy} - \int_{\bT\times[0,1]} u^a y\bar{\phi}_{yy}\bar{\phi}_x . \nn
\end{align}

Using integration by parts for $\bar{\phi}_{yy}$, the estimate in \eqref{approxdetailed}  and  Cauchy inequality, we have
\begin{align*}
\lt|\int_{\bT\times[0,1]} u^a y\bar{\phi}_{yy}\bar{\phi}_x\rt|=&\lt|\int_{\bT\times[0,1]} (u^a_y y +u^a ) \bar{\phi}_{y}\bar{\phi}_x-\f{1}{2}u^a_x y \bar{\phi}^2_{y}\rt|\\
\ls& \lt|\int_{\bT\times[0,1]}u^a_y y\bar{\phi}_{y}\bar{\phi}_x\rt|+ \int_{\bT\times[0,1]} y \lt(\bar{\phi}^2_{y}+\bar{\phi}^2_x\rt).
\end{align*}
Using estimates in Lemma \ref{lemdetail}, Cauchy inequality, Hardy inequality \eqref{hardy2} and Poincar\'{e} inequality in $x$ direction, we have
\begin{align}
&\lt|\int_{\bT\times[0,1]}u^a_y y\bar{\phi}_{y}\bar{\phi}_x\rt|\nn\\
= & \lt|\int_{\bT\times[0,1]}\lt\{u^a_e+(1-\chi(y))^2u^a_p+\chi^2(y)\hat{u}^a_p+h\rt\}_y y\bar{\phi}_{y}\bar{\phi}_x\rt|\nn\\
\ls& \int_{\bT\times[0,1]}\lt\{A+\dl \rt\}y|\bar{\phi}_{y}\bar{\phi}_x|+(1-y)[(1-\chi(y))^2u^a_p]_y y |\bar{\phi}_{y}|\f{|\bar{\phi}_x|}{1-y}\nn\\
\ls& \dl \int_{\bT\times[0,1]}\f{(1-\chi(y))^2\bar{\phi}^2_{x}}{(1-y)^2}+\int_{\bT\times[0,1]}y(\bar{\phi}^2_{y}+\bar{\phi}^2_x)\nn\\
   \ls & \int_{\bT\times[0,1]}y(\bar{\phi}_{x}^2+\bar{\phi}_{y}^2)+\dl\int_{\bT\times[0,1]}y(\bar{\phi}_{xy})^2.\nn
\end{align}

Using Hardy inequality in \eqref{hardy1} and Poincar\'{e} inequality in $x$ direction, we can obtain that
\begin{align}
J_1
   \ls& \int_{\bT\times[0,1]} y\lt(\bar{\phi}^2_{xy}+\bar{\phi}^2_{xx}\rt)\ls \int_{\bT\times[0,1]} y\lt({\phi}^2_{xy}+{\phi}^2_{xx}\rt). \label{errorenergyj1}
\end{align}

{\noindent\bf Term $J_2$}:  By integration by parts, we have

\begin{align}
J_2=&\int_{\bT\times[0,1]}\phi_{y}\lt[(v^a y\bar{\phi})_{xx}-v^a_{xx}y\bar{\phi}\rt]+\int_{\bT\times[0,1]}\phi_{y}\lt[(v^a y\bar{\phi})_{yy}-v^a_{yy}y\bar{\phi}\rt]\nn\\
   =&\int_{\bT\times[0,1]}{\phi}_{y}\lt[v^a y\bar{\phi}_{xx}+2yv^a_x\bar{\phi}_x\rt]+\int_{\bT\times[0,1]}(y\bar{\phi})_y\lt[v^a_y{\phi}_y-v^a{\phi}_{yy}\rt].\nn
\end{align}
Using the basic fact in \eqref{approxdetailed1}, divergence-free condition, and the Cauchy inequality, we can obtain that
\begin{align}
J_2
   \ls &\e\dl\int_{\bT\times[0,1]} |{\phi}_{y}| \lt|(y\bar{\phi}_{xx},y\bar{\phi}_{x})\rt|
   +\e\dl\int_{\bT\times[0,1]}\lt|(y\bar{\phi}_y,\bar{\phi})\rt|\lt| y{\phi}_{yy}\rt|\nn\\
   &-\int_{\bT\times[0,1]} u^a_x\phi_y (y\bar{\phi})_{y}\nn\\
   \ls& \e^2\dl\int_{\bT\times[0,1]}y\lt({\phi}^2_{y}+{\phi}^2_{yy}\rt)+\dl\int_{\bT\times[0,1]}\lt[y \lt(\bar{\phi}^2_{xx}+\bar{\phi}^2_{x}+\bar{\phi}^2_y\rt)+\bar{\phi}^2\rt]\nn\\
   &-\int_{\bT\times[0,1]} u^a_x\phi_y (y\bar{\phi})_{y}.\nn
\end{align}
Using the Hardy inequality and Poincar\'{e} inequality, we can obtain that
\begin{align}
J_2
   \ls&\int_{\bT\times[0,1]} y\lt({\phi}^2_{xy}+{\phi}^2_{xx}\rt)+\e^2\dl\int_{\bT\times[0,1]}y\lt({\phi}^2_{y}+{\phi}^2_{yy}\rt)-\int_{\bT\times[0,1]} u^a_x\phi_y (y\bar{\phi})_{y}.\nn
\end{align}
Using the second one of \eqref{approxdetailed} and Poincar\'{e} inequality, we can obtain that
\begin{align}
&-\int_{\bT\times[0,1]} u^a_x\phi_y (y\bar{\phi})_{y}\nn\\
&=-\int_{\bT\times[0,1]} u^a_x(\phi_{0,y}+\bar{\phi}_y) (y\bar{\phi}_y+\bar{\phi})\nn\\
&\leq \int_{\bT\times[0,1]}u^a_x \phi_{0,y} (y\bar{\phi}_y+\bar{\phi})+\dl\int_{\bT\times[0,1]}y\bar{\phi}^2_y
+\dl\int_{\bT\times[0,1]}y|\bar{\phi}_y \bar{\phi}|\nn\\
&\ls\int_{\bT\times[0,1]}u^a_x \phi_{0,y} (y\bar{\phi}_y+\bar{\phi})+\dl\int_{\bT\times[0,1]}y({\phi}^2_{xy}+{\phi}^2_{xx}).\nn
\end{align}
The same as before by using Lemma \ref{lemdetail}, we can obtain that
\bes
|u^a_x \phi_{0,y}|\ls \e\dl \lt(\f{|\phi_{0,y}|}{1-y}+{|\phi_{0,y}|}\rt).
\ees
Then using Cauchy inequality, Hardy inequality, the above estimate and Poincar\'{e} inequality, we can obtain that
\begin{align}
\int_{\bT\times[0,1]}u^a_x \phi_{0,y} (y\bar{\phi}_y+\bar{\phi})\ls \e^2\dl \int_{\bT\times[0,1]}\phi^2_{0,yy} +\dl \int_{\bT\times[0,1]} y({\phi}^2_{xy}+{\phi}^2_{xx}).\nn
\end{align}

Then at last by using the Hardy inequality \eqref{hardy1}, we obtain
\begin{align}
J_2
   \ls&\int_{\bT\times[0,1]} y\lt({\phi}^2_{xy}+{\phi}^2_{xx}\rt)+\e^2\dl\int_{\bT\times[0,1]}\lt(y{\phi}^2_{yy}+\phi^2_{0,yy}\rt). \label{errorenergyj2}
\end{align}

{\noindent\bf Term $J_3$}: Using integration by parts, the Cauchy inequality and Hardy inequality \eqref{hardy1}, we have
\begin{align}
J_3=&\int_{\bT\times[0,1]} (\p_xR_v-\p_yR_u)y\bar{\phi}\nn\\
=&\int_{\bT\times[0,1]} R_u (y\bar{\phi})_{y}-R_vy\bar{\phi}_{x}\nn\\
                \ls &\int_{\bT\times[0,1]} |R_u| |(y\bar{\phi}_{y},\bar{\phi})|+|R_v|y|\bar{\phi}_{x}|\nn\\
                \ls & \int_{\bT\times[0,1]} \lt(y\bar{\phi}^2_{x}+y\bar{\phi}^2_{y}+y^{-\kappa}\bar{\phi}^2\rt)+\int_{\bT\times[0,1]} \lt(yR^2_v+y^{\kappa}R^2_u\rt)\q \kappa\in [0,1)\nn\\
                \ls & \int_{\bT\times[0,1]} \lt(y{\phi}^2_{xx}+y^{\kappa+2}{\phi}^2_{xy}\rt)+\int_{\bT\times[0,1]} \lt(yR^2_v+y^{\kappa}R^2_u\rt)\nn\\
                \ls & \int_{\bT\times[0,1]} \lt(y{\phi}^2_{xx}+y{\phi}^2_{xy}\rt)+\int_{\bT\times[0,1]} \lt(yR^2_v+y^{\kappa}R^2_u\rt). \label{errorenergyj4}
\end{align}

Combining \eqref{errorenergy1}, \eqref{errorenergy2}, \eqref{errorenergyj1}, \eqref{errorenergyj2}, and \eqref{errorenergyj4}, we can at last obtain that
\begin{align}
&\e^2\int_{\bT\times[0,1]}y{\phi}^2_{xx}+2\e^2\int_{\bT\times[0,1]}{\phi}^2_{xy}y+\e^2\int_{\bT\times[0,1]}\bar{\phi}^2_{yy}y\nn\\
&\leq C\int_{\bT\times[0,1]} y\lt({\phi}^2_{xy}+{\phi}^2_{xx}\rt)+\e^2\dl\int_{\bT\times[0,1]}\lt(y{\phi}^2_{yy}+\phi^2_{0,yy}\rt)\nn\\
  &\quad+\int_{\bT\times[0,1]} \lt(yR^2_v+y^{\kappa}R^2_u\rt),\nn
\end{align}
which indicates that
\begin{align}
&\e^2\int_{\bT\times[0,1]}\bar{\phi}^2_{yy}y\nn\\
&\leq C\int_{\bT\times[0,1]} y\lt({\phi}^2_{xy}+{\phi}^2_{xx}\rt)+C\e^2\dl\int_{\bT\times[0,1]}\lt(y{\phi}^2_{yy}+u^2_{0,y}\rt)+C_\kappa\int_{\bT\times[0,1]} \lt(yR^2_v+y^{\kappa}R^2_u\rt). \label{errorenergy3}
\end{align}

Next we give the zero frequency estimate of $u$.
\subsubsection{Energy estimate II: $\int_{\bT\times[0,1]}u^2_{0,y}dxdy$}
\indent

Now we go back to the first equation of \eqref{e:error equation}.
\be\label{errorenergys1}
-\epsilon^2\Dl u+u^au_x+uu^a_x+v^au_y+vu^a_y+\p_xp=R_u.
\ee
Now multiplying \eqref{errorenergys1} by $u_0$ and then integrating on $\bT\times[0,1]$, we can obtain that
\begin{align}
&-\int_{\bT\times[0,1]}\epsilon^2\Dl u u_0\nn\\
&=\underbrace{-\int_{\bT\times[0,1]} \lt(u^au_x+uu^a_x+p_x\rt)u_0}_{K_1}\underbrace{-\int_{\bT\times[0,1]} \lt(v^au_y+vu^a_y\rt)u_0}_{K_2}+\underbrace{\int_{\bT\times[0,1]} R_u u_0}_{K_3}.\nn
\end{align}
Using the boundary condition $u|_{y=0,1}=u_0|_{y=0,1}=0$, we can have
\begin{align}
&-\int_{\bT\times[0,1]}\epsilon^2\Dl u u_0\nn\\
&=-\e^2 \int_{\bT\times[0,1]}(u_{xx}+u_{yy}) u_0=-\e^2 \int_{\bT\times[0,1]}u_{0,yy} u_0=\e^2 \int_{\bT\times[0,1]}u^2_{0,y}.\label{errorenergys2}
\end{align}
{\noindent\bf Term $K_1$}: By using integration by parts in $x$ and $u_0$ being independent of $x$, it is easy to see that
\be\label{errorenergysk1}
K_1=0.
\ee
{\noindent\bf Term $K_2$}: By using the facts that
\bes
\int^{2\pi}_0 v^adx=\int^{2\pi}_0 vdx=0,
\ees
and $u_0$ being independent on $x$, we have
\begin{align}
K_2=&\int_{\bT\times[0,1]} v^a (\bar{u}+u_0 )_yu_0+v (u^a-u_e(y))_y u_0+v (u_e(y))_y u_0\nn\\
   =&\int_{\bT\times[0,1]} v^a \bar{u}_y u_0+v (u^a-Ay)_y u_0+v A u_0,\q u_e=Ay\nn\\
   =&\int_{\bT\times[0,1]} v^a \bar{u}_y u_0+v (u^a-Ay)_y u_0\nn\\
   =&-\int_{\bT\times[0,1]} \bar{u}\lt( v^a_yu_0+v^a u_{0,y}\rt)+\int_{\bT\times[0,1]}v (u^a-Ay)_y u_0.\label{errorenergys3}
\end{align}
Then using the first one of \eqref{approxdetailed1}, the Cauchy inequality, Hardy inequality and Poincar\'{e} inequality, we have

\begin{align}
-\int_{\bT\times[0,1]} \bar{u}v^a u_{0,y}\leq &\e\dl\int_{\bT\times[0,1]}y|\bar{u}||u_{0,y}|\nn\\
\ls & \dl\int_{\bT\times[0,1]}y\bar{u}^2+\e^2\dl\int_{\bT\times[0,1]}u^2_{0,y}\nn\\
\ls & \dl\int_{\bT\times[0,1]}y{u}^2_x+\e^2\dl\int_{\bT\times[0,1]}u^2_{0,y}. \label{errorenergys4p}
\end{align}
Using estimates in Lemma \ref{lemdetail}, we see that
\bes
|v^a_y u_0|\ls \e\dl |u_0| \lt(1+\f{1-\chi(y)}{1-y}\rt).
\ees
Then
\begin{align}
-\int_{\bT\times[0,1]} \bar{u} v^a_yu_0\leq &\e\dl\int_{\bT\times[0,1]}|\bar{u}|\lt(1+\f{1-\chi(y)}{1-y}\rt)|u_0|\nn\\
\ls & \dl\int_{\bT\times[0,1]}y\bar{u}^2+\e^2\dl\int_{\bT\times[0,1]} \f{u^2_0}{y^2}+\f{u^2_0}{(1-y)^2}\nn\\
\ls & \dl\int_{\bT\times[0,1]}y{u}^2_x+\e^2\dl\int_{\bT\times[0,1]}u^2_{0,y}.\label{errorenergys4}
\end{align}

Besides, by using \eqref{eulerapproxd}, \eqref{prandtlapprox1d} and \eqref{prandtlapprox2d}, we have
\begin{align}
|(u^a-Ay)_y|vu_0\leq& \e\dl |v u_0| +\e\dl |v|\f{u_0}{y}+\e\dl \f{|u_0|}{1-y}\f{|v|}{1-y}.\nn
\end{align}

So, by the Cauchy inequality and Hardy inequality
\begin{align}
\int_{\bT\times[0,1]}v (u^a-Ay)_y u_0\ls &\e\dl\int_{\bT\times[0,1]}|v|\lt( |y^{-1}u_0|+|(1-y)^{-1}u_0|\rt)\nn\\
\ls &\e^2\dl\int_{\bT\times[0,1]}u^2_{0,y} +\dl\int_{\bT\times[0,1]} v^2\label{errorenergys5}\\
\ls &\e^2\dl\int_{\bT\times[0,1]}u^2_{0,y} +\dl\int_{\bT\times[0,1]} yv^2_y.\nn
\end{align}
Inserting \eqref{errorenergys4p}, \eqref{errorenergys4} and \eqref{errorenergys5} into \eqref{errorenergys3}, we obtain that
\be\label{errorenergysk2}
K_2\ls \e^2\dl\int_{\bT\times[0,1]}u^2_{0,y} +\dl\int_{\bT\times[0,1]} y u^2_x.
\ee
Combining \eqref{errorenergys2}, \eqref{errorenergysk1} and \eqref{errorenergysk2}, we at last obtain
\bes
\e^2 \int_{\bT\times[0,1]}u^2_{0,y}\leq C\e^2\dl\int_{\bT\times[0,1]}u^2_{0,y} +C\dl\int_{\bT\times[0,1]} y u^2_x+\lt|\int_{\bT\times[0,1]} R_u u_0\rt|.
\ees
This and \eqref{errorenergy3} imply \eqref{estilinearenergy} by choosing small $\dl_0$. \qed
\\

{\bf\noindent Proof of Proposition \ref{proplinearstability}}: At last, by combining the positivite estimate \eqref{estipositive} in Lemma \ref{lempositive} and \eqref{estilinearenergy} in Lemma \ref{lemlinearenergy}, we arrive at \eqref{linearstability} in Proposition \ref{proplinearstability}. \qed

\section{Partial $\dot{H}^2$ and existence of the error equation}\label{sec3}

In order to prove the main result, we still need give the $L^\i$ estimate of $(u,v)$ and existence of solution to system \eqref{e:error equation}. To this end, we consider the following ``$H^2$ estimate" for a Stokes system.
\subsection{Partial ``$\dot{H}^2$ estimate" for Stokes system}
\indent

We consider the Stokes equations
\begin{align}\label{e:stoke error equation}
\left\{
\begin{array}{lll}
-\epsilon^2\Dl u+\p_xp=f_u,\\[5pt]
-\epsilon^2\Dl v+\p_y p=g_v,\\[5pt]
u_x+v_y=0,  \\[5pt]
 u(x,y)=u(x+2\pi,y),\ v(x,y)=v(x+2\pi,y),\\[5pt]
u(x,1)=v(x,1)=0, \q u(x,0)=v(x,0)=0.
\end{array}
\right.
\end{align}
\begin{lemma}
Let $(u,v)$ be a smooth solution of (\ref{e:stoke error equation}), then we have
\begin{align}\label{s:Stokes estimate}
&\int_{0}^{1}\int_0^{2\pi} [u^2_{xx}+v^2_{xx}]dx dy+\int_{0}^{1}\int_0^{2\pi}\big(u_{xy}^2+v_{xy}^2\big)dxdy\nonumber\\
&\leq C\e^2\Big|\int_{0}^1\int_0^{2\pi}\big[f_u u_{xx}+g_v v_{xx}\big]dxdy\Big|.
\end{align}
\end{lemma}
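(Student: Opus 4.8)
The plan is to run a weighted energy estimate on \eqref{e:stoke error equation} in which the pressure is eliminated by the incompressibility condition, in the same spirit as the pressure cancellation already exploited in the $\dot H^1$ estimates. Concretely, I would multiply \eqref{e:stoke error equation}$_1$ by $u_{xx}$ and \eqref{e:stoke error equation}$_2$ by $v_{xx}$, integrate both identities over $\O=\bT\times[0,1]$, and add them.

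For the viscous contributions, write $\int_\O\Dl u\,u_{xx}=\int_\O u_{xx}^2+\int_\O u_{yy}u_{xx}$ and treat the cross term by integrating by parts once in $x$ (using $2\pi$-periodicity) and once in $y$; since $u(x,0)=u(x,1)=0$ for all $x$, every pure $x$-derivative of $u$ vanishes on $y=0,1$, so the resulting boundary terms (which all carry a factor $u_x$ on $y=0,1$) vanish and $\int_\O u_{yy}u_{xx}=\int_\O u_{xy}^2$; the same manipulation applies to $v$. Hence the viscous terms produce exactly $-\e^2\big(\|u_{xx}\|_2^2+\|u_{xy}\|_2^2+\|v_{xx}\|_2^2+\|v_{xy}\|_2^2\big)$. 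For the pressure, integrate $\int_\O p_x u_{xx}$ by parts in $x$ to get $-\int_\O p\,u_{xxx}$, and integrate $\int_\O p_y v_{xx}$ by parts in $y$ — the boundary term vanishes because $v_{xx}=0$ on $y=0,1$ — to get $-\int_\O p\,v_{xxy}$. Differentiating $u_x+v_y=0$ gives $v_{xxy}=-u_{xxx}$, so the two pressure contributions cancel. Collecting everything yields the identity
\begin{align*}
\e^2\big(\|u_{xx}\|_2^2+\|u_{xy}\|_2^2+\|v_{xx}\|_2^2+\|v_{xy}\|_2^2\big)=-\int_\O\big(f_u u_{xx}+g_v v_{xx}\big)\,dxdy,
\end{align*}
and bounding the right-hand side by its modulus gives \eqref{s:Stokes estimate}.

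The estimate is essentially an algebraic identity once the integrations by parts are organized correctly, so there is no genuine analytic obstacle; the only points requiring care are bookkeeping ones. First, one must check that every boundary term arising from the $y$-integrations carries a factor of $u$, $v$, or one of their pure $x$-derivatives evaluated at $y\in\{0,1\}$, each of which vanishes by the Dirichlet condition. Second, the pressure cancellation is slightly asymmetric: $p_xu_{xx}$ must be integrated by parts in $x$ while $p_yv_{xx}$ is integrated by parts in $y$, and only then does incompressibility produce the cancellation. An equivalent and perhaps cleaner route is to apply $\p_x$ to both equations of \eqref{e:stoke error equation} and test the results against $u_x$ and $v_x$ respectively; the computation is identical.
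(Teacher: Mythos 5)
Your argument is correct and coincides with the paper's own proof: test the system with $(u_{xx},v_{xx})$, cancel the pressure via the divergence-free condition, and convert the viscous terms into $-\e^2\|(u_{xx},u_{xy},v_{xx},v_{xy})\|_2^2$ using periodicity in $x$ and the Dirichlet conditions at $y=0,1$. The only caveat is that the identity you derive bounds the left-hand side by $C\e^{-2}$ (not $C\e^{2}$) times the modulus of the forcing integral; the exponent in the displayed lemma is a typo, as confirmed by how the estimate is invoked in \eqref{erroresti3p}.
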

\begin{proof}
Multiplying $u_{xx}$ to (\ref{e:stoke error equation})$_1$ and $v_{xx}$ to(\ref{e:stoke error equation})$_2$, integrating the resulted equation in $\Omega$ and summing them together, we arrive at
\begin{align*}
&\int_{0}^1\int_0^{2\pi}\Big[-\epsilon^2\Dl u+p_x\Big]u_{xx}dxdy+\int_{0}^1\int_0^{2\pi}\Big[-\epsilon^2\Dl v+p_y\Big]v_{xx}dxdy\\
&=\int_{0}^1\int_0^{2\pi}\big[f_u u_{xx}+g_v v_{xx}\big]dxdy.
\end{align*}
Integrating by parts and using the divergence-free condition, we deduce
\beno
\int_{0}^1\int_0^{2\pi}\big[p_x u_{xx}+p_y v_{xx}\big]dxdy=0.
\eeno
Then, also by integrating by parts, we obtain
\beno
&&\int_{0}^1\int_0^{2\pi}-\epsilon^2 \Dl u u_{xx}dxdy=-\int_{0}^1\int_0^{2\pi}\epsilon^2 (u^2_{xx}+u^2_{xy})dxdy,\\
&&\int_{0}^1\int_0^{2\pi}-\epsilon^2 \Dl v v_{xx}dxdy=-\int_{0}^1\int_0^{2\pi}\epsilon^2 (v^2_{xx}+v^2_{xy})dxdy.
\eeno
Thus we obtain (\ref{s:Stokes estimate}) and complete the proof of this lemma.
\end{proof}

To obtain the $L^\infty$ estimate, we need the following anisotropic Sobolev embedding.
\begin{lemma}
Let $(u,v)$ be smooth periodic function in $x$ and satisfies $(u,v)|_{y=0,1}=(0,0)$, then
\begin{align}\label{s:Sobolev embedding}
\|(u,v)\|_\infty \leq C(\|(u_x,v_x)\|_2+\|(u_y,v_y)\|_2+\|(u_{xy},v_{xy})\|_2).
\end{align}
\end{lemma}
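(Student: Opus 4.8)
Since the claimed inequality is symmetric in $u$ and $v$ and additive across the $L^\infty$ and $L^2$ norms, it suffices to prove, for a single scalar function $w$ that is $2\pi$-periodic in $x$ and vanishes on $\{y=0\}\cup\{y=1\}$,
\begin{equation*}
\|w\|_{L^\i(\O)}^2 \ls \|w_x\|_{L^2(\O)}^2 + \|w_y\|_{L^2(\O)}^2 + \|w_{xy}\|_{L^2(\O)}^2,
\end{equation*}
and then sum over $w\in\{u,v\}$. Note that since $w(\cdot,0)\equiv 0$ as a function of $x$, differentiating in $x$ gives $w_x(\cdot,0)\equiv 0$ as well.

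The first ingredient is the one-dimensional Agmon-type inequality on the circle: for $g\in C^1(\bT)$ there is, by the intermediate value theorem, a point $x_*$ at which $g^2(x_*)$ equals the mean $\frac1{2\pi}\int_0^{2\pi} g^2\,dx$, so that for every $x$,
\begin{equation*}
g^2(x) = g^2(x_*) + \int_{x_*}^{x}(g^2)_x\,dx' \le \frac1{2\pi}\int_0^{2\pi} g^2\,dx + 2\int_0^{2\pi}|g|\,|g_x|\,dx.
\end{equation*}
Applying this with $g=w(\cdot,y)$ for fixed $y$ and using Cauchy--Schwarz in $x$, I obtain
\begin{equation*}
\sup_{x}w^2(x,y) \ls \|w(\cdot,y)\|_{L^2_x}^2 + \|w(\cdot,y)\|_{L^2_x}\,\|w_x(\cdot,y)\|_{L^2_x}.
\end{equation*}

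The second step is to bound the two $y$-dependent $L^2_x$-norms uniformly in $y$ by propagating up from the boundary $y=0$. Writing $\phi(y):=\|w(\cdot,y)\|_{L^2_x}^2$ and $\psi(y):=\|w_x(\cdot,y)\|_{L^2_x}^2$, one has $\phi(0)=\psi(0)=0$, and by the fundamental theorem of calculus in $y$, Cauchy--Schwarz, and the Poincar\'e inequality $\|w\|_{L^2(\O)}\ls\|w_y\|_{L^2(\O)}$ (valid since $w|_{y=0}=0$),
\begin{equation*}
\phi(y) = \int_0^y\!\!\int_0^{2\pi} 2w\,w_y\,dx\,dy' \le 2\|w\|_{L^2(\O)}\|w_y\|_{L^2(\O)} \ls \|w_y\|_{L^2(\O)}^2,
\end{equation*}
and likewise $\psi(y)\le 2\|w_x\|_{L^2(\O)}\|w_{xy}\|_{L^2(\O)}$, for all $y\in[0,1]$. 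Substituting these into the previous display and applying Young's inequality gives $\sup_{\O}w^2 \ls \|w_y\|_{L^2(\O)}^2 + \|w_x\|_{L^2(\O)}^2 + \|w_{xy}\|_{L^2(\O)}^2$, which is the asserted estimate; summing over $w=u,v$ completes the proof.

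There is no genuine analytic difficulty here: the only point requiring a little care is that $x$-periodicity forbids propagating information from an $x$-endpoint, which is precisely why the mean-value (Agmon) device is used in $x$, while honest boundary vanishing is exploited only in $y$. One must also remember to use that $w_x$, not merely $w$, vanishes on $\{y=0\}$ — automatic from periodicity and $w(\cdot,0)\equiv0$ — so that $\psi(0)=0$.
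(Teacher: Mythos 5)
Your proof is correct, and it follows a genuinely different route from the paper's. The paper expands $u$ in a Fourier series in $x$, applies the one--dimensional Agmon inequality \emph{in $y$} to each nonzero mode $u_k(y)$ (using $u_k(0)=0$), sums over $k\neq 0$ with a Cauchy--Schwarz/H\"older argument that trades one factor of $|k|$ into $\partial_x$, and then treats the zero mode $u_0$ separately by the fundamental theorem of calculus in $y$. You instead stay entirely in physical space: you apply the circle version of the Agmon inequality \emph{in $x$} (via the intermediate--value device, since there is no $x$--boundary to anchor at), obtaining $\sup_x w^2(\cdot,y)\ls \|w(\cdot,y)\|_{L^2_x}^2+\|w(\cdot,y)\|_{L^2_x}\|w_x(\cdot,y)\|_{L^2_x}$, and then bound the two $y$--dependent norms uniformly in $y$ by the fundamental theorem of calculus in $y$ together with the Poincar\'e inequality, using that $w$ and $w_x$ both vanish at $y=0$. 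The end result and the set of norms appearing are the same, but the order of operations (Agmon in $x$ first versus Fourier in $x$ first) is swapped, and the zero/nonzero mode split that is explicit in the paper is implicit in your two terms $\|w\|_{L^2_x}^2$ and $\|w\|_{L^2_x}\|w_x\|_{L^2_x}$. Your version avoids Fourier series altogether and makes transparent which direction uses periodicity and which uses Dirichlet data; the paper's Fourier version is perhaps more mechanical to generalize to fractional or higher--order anisotropic estimates. Both are fine.
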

\begin{proof}
Actually this result has been  proven in \cite[Lemma 4.2]{FeiGLT:2021ARXIV}. For completeness, we repeat the proof again here. In fact, we make Fourier series expansion of $u(x,y)$ to obtain
\beno
u(x,y)=\sum_{k\in Z}u_k(y)e^{ikx}, \ \forall y\in [0,1],
\eeno
where $u_k(y):=\f{1}{2\pi}\int^{2\pi}_0 u(x,y)dx$.  Thus
\beno
|u(x,y)|\leq\sum_{k\in Z}|u_k(y)|, \ \forall (x,y)\in [0,2\pi]\times[0,1].
\eeno
Notice that $u(x,0)=0, \ \forall\ x \in [0,2\pi]$, which gives $u_k(x,0)=0,\ \forall \ x\in [0,2\pi], k\in Z$, hence by one dimensional Sobolev embedding,
\beno
\|u_k\|_{\infty}\leq \sqrt{2}\|u_k\|^{\frac12}_2\|u'_k\|^{\frac12}_2.
\eeno
Thus,
\beno
\sum_{k\neq 0}\|u_k\|_\infty\leq \sqrt{2}\sum_{k\neq 0}\|u_k\|^{\frac12}_2\|u'_k\|^{\frac12}_2\leq C\Big(\sum_{k\neq 0}|k|^2\|u_k\|^2_2\Big)^{\frac14}\Big(\sum_{k\neq 0}|k|^2\|u'_k\|^2_2\Big)^{\frac14}.
\eeno

Moreover, it's easy to get
\beno
u_x(x,y)=\sum_{k\in Z}iku_k(y)e^{ikx}, \quad u_{xy}(x,y)=\sum_{k\in Z}iku'_k(y)e^{ikx}, \ \forall y\in [0,1].
\eeno
Thus,
\beno
\|u_x\|_2^2=\sum_{k\in Z}|k|^2\|u_k\|_2^2, \quad \|u_{xy}\|_2^2=\sum_{k\in Z}|k|^2\|u'_k\|_2^2.
\eeno
Hence, we obtain
\beno
\sum_{k\neq 0}\|u_k\|_\infty\leq C\|u_x\|^{\frac12}_2\|u_{xy}\|_2^{\frac12}.
\eeno
Moreover, due to $u_0(y)=\frac{1}{2\pi}\int_0^{2\pi}u(x,y)dx$ and $u(x, 0)=0,\ \forall x\in [0,2\pi]$, it's easy to get
\beno
|u_0(y)|\leq C\|u_y\|_2, \quad \forall y\in [0,1].
\eeno

Finally, we obtain
\beno
\|u\|_{\infty}\leq C(\|u_x\|_2+\|u_y\|_2+\|u_{xy}\|_2).
\eeno

Similarly we can obtain the same result for $v$ and hence  we complete the proof of this lemma.
\end{proof}

\subsection{Existence for the error equations}
\indent

We apply the contraction mapping theorem to prove the existence for the error equations (\ref{e:error equation}).

\begin{proposition}\label{existence and error estimate of error equation}
There exist $\epsilon_0>0,\dl_0>0$ such that for any $\epsilon\in (0,\epsilon_0), \dl\in (0,\dl_0)$, the error equations (\ref{e:error equation}) have a unique solution $(u,v)$ which satisfies
$$\|(u,v)\|_\infty\leq C\epsilon.$$
\end{proposition}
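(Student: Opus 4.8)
The plan is a contraction-mapping argument in which the linearized operator is kept implicit (so that Proposition~\ref{proplinearstability} and the partial $\dot H^2$ estimate \eqref{s:Stokes estimate} apply verbatim) and only the quadratic term $(u\p_x+v\p_y)(u,v)$ of \eqref{e:error equation} is iterated. I would work in the Hilbert space $\mathbb X$ of $x$-periodic, divergence-free fields $(u,v)$ on $\O=\bT\times[0,1]$ vanishing on $\{y=0,1\}$, with
\begin{equation*}
\mathcal N(u,v)^2:=\e^2\!\int_\O\!\big(u_{0,y}^2+yu_y^2\big)+\int_\O\! y\big(u_x^2+v_x^2\big)+\int_\O\!\big(u_{xx}^2+v_{xx}^2+u_{xy}^2+v_{xy}^2\big),
\end{equation*}
i.e. the quantities controlled by Proposition~\ref{proplinearstability} together with those in \eqref{s:Stokes estimate}; by \eqref{s:Sobolev embedding} and Hardy/Poincar\'e inequalities one has $\|(u,v)\|_\infty\lesssim\e^{-1}\mathcal N(u,v)$. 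Given $(\tilde u,\tilde v)\in\mathbb X$, I set $\tilde R_u:=R_u^a-(\tilde u\p_x+\tilde v\p_y)\tilde u$, $\tilde R_v:=R_v^a-(\tilde u\p_x+\tilde v\p_y)\tilde v$ and define $\mathcal T(\tilde u,\tilde v):=(u,v)$ to be the solution of the linear problem obtained from \eqref{e:error equation} on replacing $(R_u,R_v)$ by $(\tilde R_u,\tilde R_v)$; a fixed point of $\mathcal T$ solves \eqref{e:error equation}.

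\emph{Step 1 (solvability and bound for $\mathcal T$).} The operator defining $\mathcal T$ is the Stokes operator plus a first-order perturbation with bounded coefficients (by \eqref{approxdetailed}--\eqref{approxdetailed1}, $|u^a|\lesssim1$, $|v^a|\lesssim\e\dl$), hence Fredholm of index zero after projecting out the pressure; its kernel is trivial by Proposition~\ref{proplinearstability} applied with $R_u=R_v=0$, so it is invertible and $\mathcal T$ is well defined whenever $\tilde R_u,\tilde R_v,\p_x\tilde R_u,\p_x\tilde R_v\in L^2$. Feeding $(u,v)$ into Proposition~\ref{proplinearstability}, then into \eqref{s:Stokes estimate} with $f_u=\tilde R_u-S_u$, $g_v=\tilde R_v-S_v$ (the $S_u,S_v$ contributions being absorbed into the left side by the same manipulations as in Lemma~\ref{lempositive}, using \eqref{approxdetailed}--\eqref{approxdetailed1}), I obtain
\begin{equation*}
\mathcal N(u,v)^2\ \lesssim\ \Big|\int_\O\!\big(\tilde R_uu_x+\tilde R_vv_x\big)\Big|+\int_\O\!\big(y^\kappa\tilde R_u^2+y\tilde R_v^2\big)+\Big|\int_\O\!\tilde R_uu_0\Big|.
\end{equation*}

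\emph{Step 2 (invariance and contraction).} Since $\|R_u^a\|_2,\|R_v^a\|_2\lesssim\e^9$, the $R^a$-part of $\tilde R$ contributes $O(\e^{18})$ on the right of the last display. For the quadratic part $Q$ I would use $y\le1$, incompressibility of $(\tilde u,\tilde v)$, the weighted Poincar\'e/Hardy inequalities and $\|(\tilde u,\tilde v)\|_\infty\lesssim\e^{-1}\mathcal N(\tilde u,\tilde v)$ to bound $\|y^{\kappa/2}Q\|_2+\|y^{1/2}Q\|_2\lesssim\e^{-1}\|(\tilde u,\tilde v)\|_\infty\,\mathcal N(\tilde u,\tilde v)$; the coupling $\int Q\,u_0$ is treated by replacing $Q$ by its $x$-average $\p_y(\overline{\tilde v\,\tilde u})$, integrating by parts in $y$ and using $u_0|_{y=0,1}=0$, so that $|\int Q\,u_0|\lesssim\|(\tilde u,\tilde v)\|_\infty^2\,\e^{-1}\mathcal N(u,v)$. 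Inserting these into Step~1 and absorbing the $\mathcal N(u,v)$ terms yields $\mathcal N(u,v)^2\lesssim\e^{18}+\e^{-6}\mathcal N(\tilde u,\tilde v)^4$, so $\mathcal T$ maps the ball $B_\rho=\{\mathcal N\le\rho\}$ into itself for $\rho=c\e^3$ with $c$ small and $\e\le\e_0$; on $B_\rho$ one then has $\|(u,v)\|_\infty\lesssim\e^{-1}\rho\le C\e$. Running the same estimates on $\mathcal T(\tilde u_1,\tilde v_1)-\mathcal T(\tilde u_2,\tilde v_2)$ — whose forcing is the bilinear difference of the two quadratic terms — gives $\mathcal N\big(\mathcal T(\tilde u_1,\tilde v_1)-\mathcal T(\tilde u_2,\tilde v_2)\big)\lesssim\e^{-3}\rho\;\mathcal N(\tilde u_1-\tilde u_2,\tilde v_1-\tilde v_2)$, which is a contraction once $c$ is small. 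Banach's fixed point theorem then produces a unique $(u,v)\in B_\rho$ solving \eqref{e:error equation} with $\|(u,v)\|_\infty\le C\e$; uniqueness among all smooth solutions follows by applying Proposition~\ref{proplinearstability} to the difference of two solutions.

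I expect Step~2 to be the main obstacle. Because the a priori estimate of Proposition~\ref{proplinearstability} is only a \emph{weighted} and $x$-frequency-split estimate, each pairing with the nonlinear transport term costs a power of $\e^{-1}$, and the single term $\int Q\,u_0$ — the one not tested against a coercive quantity — must be tamed by the incompressibility/average trick above; making the powers of $\e$ close (which is why the ball radius is forced to be as small as $\rho\sim\e^3$) relies crucially on the $\e^9$ smallness of the approximate-solution remainders $R_u^a,R_v^a$ and is the reason the whole scheme must be run in the anisotropic norm $\mathcal N$ rather than in a plain $H^1$ or $H^2$ space.
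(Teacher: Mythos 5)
Your overall strategy is the same as the paper's: iterate only the quadratic transport term in a contraction-mapping scheme, solve the resulting linear problem by combining the weighted $\dot H^1$ stability estimate of Proposition~\ref{proplinearstability} with the partial $\dot H^2$ Stokes estimate~\eqref{s:Stokes estimate}, and pass to $L^\infty$ via the anisotropic embedding~\eqref{s:Sobolev embedding}. The issue lies in the displayed Step~1 estimate, which cannot hold in the form you wrote. The second-order block of $\mathcal N^2$ only comes from \eqref{s:Stokes estimate}, whose right-hand side is $C\e^{-2}\bigl|\int (f_u u_{xx}+g_v v_{xx})\bigr|$; absorbing the $S_u,S_v$ contributions as in the paper (see \eqref{erroresti6}--\eqref{erroresti7}) costs a further factor $\e^{-2}\dl$ on the weighted first-order block, so what one actually gets is of the shape
\begin{equation*}
\|(u_{xx},v_{xx},u_{xy},v_{xy})\|_2^2\ \lesssim\ \e^{-2}\Bigl|\int_\O \bigl(\tilde R_u u_{xx}+\tilde R_v v_{xx}\bigr)\Bigr|+\e^{-4}\dl\,\bigl\|(\sqrt y u_x,\sqrt y v_x,\e\sqrt y u_y,\e u_{0,y})\bigr\|_2^2.
\end{equation*}
Combined with Proposition~\ref{proplinearstability} this gives $\mathcal N^2\lesssim\e^{-6}\|(\tilde R_u,\tilde R_v,\p_x\tilde R_u,\p_x\tilde R_v)\|_2^2$, not the $\e$-free bound in your Step~1. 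This is precisely why the paper weights the second-order block of its norm $\|\cdot\|_E$ by $\e^4$: the two blocks then scale consistently, the combined estimate is $\|\cdot\|_E^2\lesssim\e^{-2}\|(R^a_u,\dots)\|_2^2+\e^{-10}\|\cdot\|_E^4$, and the ball $\{\|\cdot\|_E^2\le\e^{11}\}$ is invariant.

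The gap is one of bookkeeping rather than strategy, and it is non-fatal because of the large $\e^9$-margin in the remainder. With the corrected Step~1 ($\mathcal N^2\lesssim\e^{-6}\|(\tilde R,\p_x\tilde R)\|_2^2$), $\|(u,v)\|_\infty\lesssim\e^{-1}\mathcal N$, and $\|\nabla(u,v)\|_2^2\lesssim\e^{-2}\mathcal N^2$, Step~2 becomes $\mathcal N(u,v)^2\lesssim\e^{12}+\e^{-10}\mathcal N(\tilde u,\tilde v)^4$ instead of your $\e^{18}+\e^{-6}\mathcal N^4$; the ball $\{\mathcal N^2\le\e^{11}\}$ is still invariant and the map is still a contraction, giving $\|(u,v)\|_\infty\lesssim\e^{9/2}\le C\e$. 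Your integration-by-parts treatment of $\int Q\,u_0$ via the $x$-average $\p_y\overline{\tilde v\,\tilde u}$ is correct but unnecessary: the paper bounds $|\int R_u u_0|\le\nu\e^2\|u_{0,y}\|_2^2+C_\nu\e^{-2}\|yR_u\|_2^2$ directly by Cauchy and Hardy, treating the whole remainder at once.
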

\begin{proof}
For a smooth function $(u,v)$ which satisfies
\begin{align}\label{iterative conditon}
\left\{
\begin{array}{ll}
 u_x+v_y=0,\\[5pt]
u(x,y)=u(x+2\pi,y),\ v(x,y)=v(x+2\pi,y),\\[5pt]
u(x,1)=u(x,0)=v(x,1)=v(x,0)=0,
\end{array}
\right.
\end{align}
we consider the following linear problem:
\begin{align}
\left\{
\begin{array}{lll}
-\epsilon^2 \Dl \t{u} +\t{p}_x+S_{\t{u}}=R_u,\\[7pt]
-\epsilon^2 \Dl \t{v}+\t{p}_y+S_{\t{v}}=R_v,\\[7pt]
\partial_x \t{u}+\partial_y\t{v}=0,  \\[7pt]
 \bar{u}(x,y)=\t{u}(x+2\pi,y),\  \t{v}(x,y)=\t{v}(x+2\pi,y), \\[7pt]
\bar{u}(x,1)=\t{v}(x,1)=\t{u}(x,0)=\t{v}(x,0)=0,
\end{array}
\right.\nonumber
\end{align}
where
\begin{align*}
S_{\t{u}}:=&u^a\t{u}_x+v^a \t{u}_y+\t{u}u^a_x+\t{v}u^a_y,\\[5pt]
S_{\t{v}}:=&u^a\t{v}_x+v^a\t{v}_y+\t{u}v^a_x+\t{v}v^a_y,\\[5pt]
R_u:=& R_u^a-uu_x-vu_y,\quad R_v:=R_v^a-uv_x-vv_y.
\end{align*}

By the linear stability estimate (\ref{linearstability}), we deduce that there exist $\epsilon_0>0, \dl_0>0$ such that for any $\epsilon\in (0,\epsilon_0), \dl\in(0,\dl_0)$, there holds
\begin{align}
&\|(\s{y}\t{u}_x,\s{y}\t{v}_x,\e \s{y}\t{u}_{\neq0,y},\e\t{u}_{0,y})\|_2^2\nn\\
&\leq  C\lt|\int_{\bT\times[0,1]} R_u \t{u}_{x}+R_v \t{v}_{x}\rt|+C_{\kappa}\int_{\bT\times[0,1]} \lt(y^{\kappa} R^2_u+yR^2_v\rt)+\lt|\int_{\bT\times[0,1]} R_u \t{u}_0\rt|.\label{erroresti1}
\end{align}
Now we give some estimates for the forced terms on the right hand of \eqref{erroresti1}.

Using integration by parts, the Cauchy inequality, and Hardy inequality \eqref{hardy1}, we have
\begin{align*}
&\lt|\int_{\bT\times[0,1]} R_u \t{u}_{x}+R_v \t{v}_{x}\rt|\\
&=\lt|\int_{\bT\times[0,1]}\p_xR_u \t{u}+\p_xR_v \bar{v}\rt|\nn\\
&\leq \nu\e^2\|(\t{u},\t{v})\|^2_{L^2}+C_\nu\e^{-2}\|(\p_xR_u,\p_xR_v)\|^2_{L^2}\nn\\
&\leq  \nu\e^2\|y(\t{u}_y,\t{v}_y)\|^2_{L^2}+C_\nu\e^{-2}\|(\p_xR_u,\p_xR_v)\|^2_{L^2},
\end{align*}
for small $\nu$. And also from the Cauchy inequality and Hardy inequality \eqref{hardy2}, we have
\begin{align}
&\lt|\int_{\bT\times[0,1]} R_u \t{u}_0\rt|\leq \nu\e^{2}\|\t{u}_{0,y}\|^2_{L^2}+C_\nu\e^{-2}\|yR_u\|^2_{L^2}.\nn
\end{align}
Inserting the above two inequalities into \eqref{erroresti1}, we can obtain that
\begin{align*}
\|(\s{y}\t{u}_x,\s{y}\t{v}_x,\e \s{y}\t{u}_{y},\e\t{u}_{0,y})\|_2^2
\leq C\e^{-2}\|(R_u,R_v, \p_xR_u,\p_xR_v )\|^2_{L^2}.
\end{align*}

Direct computation gives that
\beno
\|(R_u, R_v)\|_2^2\leq C\|(R^a_u, R^a_v)\|_2^2+C\|(u,v)\|^2_\infty \|(u_y,u_x, v_x)\|_2^2
\eeno
and
\begin{align}
\|(\p_xR_u, \p_xR_v)\|_2^2\leq &C\|(\p_xR^a_u, \p_xR^a_v)\|_2^2+C\|(u,v)\|^2_\infty \|(u_{xx},v_{xx},v_{xy},u_{xy})\|_2^2.\nn
\end{align}
Using the periodicity and Poincar\'{e} inequality in $x$ direction, we have
\bes
\|(u_y,u_x, v_x)\|_2^2\ls \|(u_{0,y},u_{xy},u_{xx}, v_{xx})\|_2^2.
\ees

Thus, there holds
\begin{align}
&\|(\s{y}\t{u}_x,\s{y}\t{v}_x,\e \s{y}\t{u}_{y},\e\t{u}_{0,y})\|_2^2\nn\\
\leq& C\e^{-2}\|(R^a_u,R^a_v,\p_xR^a_u,\p_xR^a_v)\|^2_{L^2} \nn\\
   &+C\e^{-2}\|(u,v)\|^2_\infty \lt(\|u_{0,y}\|_2^2+\|(u_{xx},v_{xx},v_{xy},u_{xy})\|_2^2\rt).\label{erroresti3}
\end{align}

Next, we consider the following Stokes problem:
\begin{align}
\left\{
\begin{array}{lll}
-\epsilon^2\Dl \t{u}+\t{p}_x=R_u- S_{\t{u}},\\[7pt]
-\epsilon^2\Dl \t{v}+\t{p}_y=R_v-S_{\t{v}},\\[7pt]
\partial_x \t{u}+\partial_y\t{v}=0,  \\[7pt]
 \t{u}(x,y)=\t{u}(x+2\pi,y),\  \t{v}(x,y)=\t{v}(x+2\pi,y),\\[7pt]
\t{u}(x,1)=\t{v}(x,1)=\t{u}(x,0)=\t{v}(x,0)=0.
\end{array}
\right.\nonumber
\end{align}
By the Stokes estimate (\ref{s:Stokes estimate}), we deduce
\begin{align}
&\|(\t{u}_{xx},\t{v}_{xx},\t{u}_{xy},\t{v}_{xy})\|_2^2\nn\\[5pt]
 &\leq C\e^{-2}\Big|\int_{0}^1\int_0^{2\pi}\Big[(R_u-S_{\t{u}}) \t{u}_{xx}+(R_v-S_{\t{v}}) \t{v}_{xx}\Big]dxdy\Big|.\label{erroresti3p}
\end{align}

We compute the right hand term by term. \\

{\bf 1)The term $\int_{0}^{1}\int_0^{2\pi}R_u \t{u}_{xx}dxdy$ and $\int_{0}^{1}\int_0^{2\pi}R_v \t{v}_{xx}dxdy$:} First, by Cauchy inequality, there holds
\begin{align*}
\Big|\int_{0}^{1}\int_0^{2\pi}R_u \t{u}_{xx}dxdy \Big|&=\Big|\int_{0}^{1}\int_0^{2\pi}[R^a_u \t{u}_{xx}-uu_x \t{u}_{xx}-vu_y\t{u}_{xx}]dxdy\Big|\\[5pt]
&\leq \nu\e^{2} \|\t{u}_{xx}\|^2_2+C_\nu \e^{-2}\| R^a_u\|^2_2+C_\nu\e^{-2}\|u\|^2_{L^\i}\|u_{x}\|^2_{L^2}\nn\\
&\quad+\Big|\int_{0}^{1}\int_0^{2\pi}vu_y\t{u}_{xx} dxdy\Big|.
\end{align*}

Using the divergence-free condition, integrating by part and Cauchy inequality, we deduce that
\begin{align*}
&\Big|\int_{0}^{1}\int_0^{2\pi}vu_y\t{u}_{xx}dxdy\Big|\\
&=\Big|\int_{0}^{1}\int_0^{2\pi}(\t{u}_{xy}u_x v+\t{u}_{xy}uv_x+\t{u}_{xx}uu_x)dx dy\Big|\\
&\leq\nu\e^2\|(\t{u}_{xx},\t{u}_{xy})\|_2^2+C_\nu\e^{-2}\|(u,v)\|^2_{L^\i}\|(u_{x},v_x)\|^2_{L^2}.
\end{align*}
Thus, we obtain
\begin{align}
&\Big|\int_{0}^{1}\int_0^{2\pi}R_u \t{u}_{xx}dxdy\Big|\nn\\
&\leq 3\nu\e^2\|(\t{u}_{xx},\t{u}_{xy})\|_2^2+C_\nu\e^{-2}\|(u,v)\|^2_{L^\i}\|(u_{x},v_x)\|^2_{L^2}+C_\nu \e^{-2}\| R^a_u\|^2_2. \label{erroresti4}
\end{align}
Similarly,
\begin{align}
&\Big|\int_{0}^{1}\int_0^{2\pi}R_v \t{v}_{xx}dxdy\Big|\nn\\
&\leq 3\nu\e^2\|(\t{v}_{xx},\t{u}_{xy})\|_2^2+C_\nu\e^{-2}\|(u,v)\|^2_{L^\i}\|(u_{x},v_x)\|^2_{L^2}+C_\nu \e^{-2}\| R^a_v\|^2_2.\label{erroresti5}
\end{align}

Combining \eqref{erroresti4} and \eqref{erroresti5} and using Poincar\'{e} inequality, we have

\begin{align}
&\Big|\int_{0}^{1}\int_0^{2\pi}R_u \t{u}_{xx}dxdy\Big|+\Big|\int_{0}^{1}\int_0^{2\pi}R_v \t{v}_{xx}dxdy\Big|\nn\\
\ls &\nu\e^2\|(\t{u}_{xx},\t{v}_{xx},\t{u}_{xy},\t{v}_{xy})\|_2^2+C_\nu\e^{-2}\lt(\|(u,v)\|^2_{L^\i}\|(u_{xx},v_{xx})\|^2_{L^2}+\|(R^a_u, R^a_v)\|^2_2\rt).\label{erroresti8}
\end{align}

{\bf 2) The term $\int_{0}^{1}\int_0^{2\pi}S_{\t{u}}\t{u}_{xx}dxdy$ and $\int_{0}^{1}\int_0^{2\pi}S_{\t{v}}\t{v}_{xx}dxdy $:}\\

First, there holds
 \beno
\Big| \int_{0}^{1}\int_0^{2\pi}S_{\t{u}}\t{u}_{xx}dxdy\Big|=\Big| \int_{0}^{1}\int_0^{2\pi}[u^a\t{u}_x+v^a\t{u}_y+u^a_x\t{u}+u^a_y\t{v}]\t{u}_{xx}dxdy\Big|.
 \eeno
Integrating by parts, using \eqref{approxdetailed} and Poincar\'{e} inequality, we deduce that
\begin{align*}
\Big|\int_{0}^{1}\int_0^{2\pi}u^a\t{u}_x\t{u}_{xx}dxdy\Big|=&\frac12\Big|\int_{0}^{1}\int_0^{2\pi}u^a_x\t{u}^2_x dxdy \Big|\leq C\dl \|\s{y}\t{u}_x\|_2^2.
\end{align*}
Using integration by parts, \eqref{approxdetailed1} and H\"{o}lder inequality, we deduce that
\begin{align*}
\Big|\int_{0}^{1}\int_0^{2\pi}v^a\t{u}_y\t{u}_{xx}dxdy\Big|=&\Big|\int_{0}^{1}\int_0^{2\pi}[v^a_x \t{u}_{y}+v^a\t{u}_{xy}]\t{u}_x dxdy\Big|\\[5pt]
\leq& C\epsilon\dl \|\s{y}\t{u}_x\|_2\|(\t{u}_{0,y},\t{u}_{xy})\|_2.
\end{align*}
Also by using integration by parts, \eqref{approxdetailed}, H\"{o}lder inequality and Hardy inequality \eqref{hardy1}, we deduce that
\begin{align*}
\Big|\int_{0}^{1}\int_0^{2\pi}u^a_x\t{u}\t{u}_{xx}dxdy\Big|=&\Big|\int_{0}^{1}\int_0^{2\pi}[u^a_x \t{u}^2_x+u^a_{xx}\t{u}_{x}\t{u}] dxdy\Big|\\[5pt]
\leq &C\e\dl\|\s{y}\t{u}_x\|^2_2+C\e\dl\|\s{y}\t{u}_x\|_2\|y\t{u}_y\|_2.
\end{align*}
Almost the same as the above three, we have
\begin{align*}
\Big|\int_{0}^{1}\int_0^{2\pi}u^a_y\t{v}\t{u}_{xx}dxdy\Big|=&\Big|\int_{0}^{1}\int_0^{2\pi}[(u^a_{xy} \t{v} \t{u}_x+u^a_y\t{v}_{x}\t{u}_x] dxdy\Big|\\[5pt]
=&\Big|\int_{0}^{1}\int_0^{2\pi}[(-v^a_{yy} \t{v} \t{u}_x+u^a_y y\t{v}_{x}y^{-1}\t{u}_x] dxdy\Big|\\
\leq& C\dl \|\t{v}_y\|\|y\t{u}_x\|_2+C\|y\t{v}_x\|_2\|\t{u}_{xy}\|_2,\\[5pt]
\leq&C\dl \|\t{v}_{xy}\|\|\s{y}\t{u}_x\|_2+C\|\s{y}\t{v}_x\|_2\|\t{u}_{xy}\|_2.
\end{align*}
Thus, there exist $\epsilon_0>0, \dl_0>0$ such that for any $\epsilon\in (0,\epsilon_0), \dl\in(0,\dl_0)$, there holds
\begin{align}
&\Big| \int_{0}^{1}\int_0^{2\pi}S_{\t{u}}\t{u}_{xx}dxdy\Big|\nn\\
&\leq\nu \e^2\|(\t{u}_{xx},\t{v}_{xx},\t{u}_{xy},\t{v}_{xy})\|_2^2+C_\nu\epsilon^{-2}\dl\|(\s{y}\t{u}_x,\s{y}\t{v}_x,\e \s{y}\t{u}_{y},\e\t{u}_{0,y})\|_2^2.\label{erroresti6}
\end{align}
Similarly, we can obtain
\begin{align}
&\Big| \int_{0}^{1}\int_0^{2\pi}S_{\t{v}}\t{v}_{xx}dxdy\Big|\nn\\
&\leq\nu \e^2\|(\t{u}_{xx},\t{v}_{xx},\t{u}_{xy},\t{v}_{xy})\|_2^2+C_\nu\epsilon^{-2}\dl\|(\s{y}\t{u}_x,\s{y}\t{v}_x,\e \s{y}\t{u}_{y},\e\t{u}_{0,y})\|_2^2.\label{erroresti7}
\end{align}

Combining the above estimates in \eqref{erroresti3p} \eqref{erroresti8}, \eqref{erroresti6} and \eqref{erroresti7}, we deduce that

\begin{align}\label{2orderestimate}
&\|(\t{u}_{xx},\t{v}_{xx},\t{u}_{xy},\t{v}_{xy})\|_2^2\nn\\
\leq& C\epsilon^{-4}\|( R_u^a, R_v^a)\|_2^2+C\epsilon^{-4}\dl\|(\s{y}\t{u}_x,\s{y}\t{v}_x,\e \s{y}\t{u}_{y},\e\t{u}_{0,y})\|_2^2\nn\\
   &+C\epsilon^{-4}\|(u,v)\|_\infty^2\|({u}_{xx},{v}_{xx},{u}_{xy},{v}_{xy})\|_2^2.
\end{align}

Set the total energy to be
\begin{align}
\|(u,v)\|_E^2:=&\|(\s{y}{u}_x,\s{y}{v}_x,\e \s{y}{u}_{y},\e{u}_{0,y})\|_2^2+\e^{4}\|(u_{xx},v_{xx},v_{xy},u_{xy})\|_2^2.\nn
\end{align}

Combining \eqref{erroresti3} and \eqref{2orderestimate}, we can obtain that
\begin{align}
\|(\t{u},\t{v})\|_E^2\ls C\e^{-2}\|(R^a_u,R^a_v,\p_xR^a_u,\p_xR^a_v)\|^2_{L^2}+C\e^{-6}\|(u,v)\|^2_\infty \|(u,v)\|_E^2. \label{energyfinal}
\end{align}
Then by using the Sobolev embedding \eqref{s:Sobolev embedding} and Poincar\'{e} inequality, we have
\begin{align}\label{linfinity}
&\|({u},{v})\|_\infty^2\ls \|u_{0,y}\|^2+\|(u_{xx},u_{xy},v_{xy}, v_{xy})\|^2_{L^2} \ls \e^{-4}\|({u},{v})\|^2_E.
\end{align}

With the help of  \eqref{energyfinal} and \eqref{linfinity} , we arrive that
\begin{align}
\|(\t{u},\t{v})\|_E^2\leq& C\e^{-2}\|(R^a_u,R^a_v,\p_xR^a_u,\p_xR^a_v)\|^2_{L^2}+\e^{-10}\|({u},{v})\|_E^4. \label{energyfinal1}
\end{align}
Let $E=\{(u,v)\in C^\infty: (u,v) \ satisfes \ (\ref{iterative conditon})\ and \ \|(u,v)\|_E<+\infty\}$.
Thus, due to
\beno
\|(R^a_u,R^a_v,\p_xR^a_u,\p_xR^a_v)\|_{L^2}\leq C\varepsilon^9,
\eeno
there exist $\epsilon_0>0, \dl_0>0$ such that for any $\epsilon\in (0,\epsilon_0), \dl\in(0,\dl_0)$, the operator
\beno
(u,v)\mapsto (\t{u},\t{v})
\eeno
maps the ball $B:=\{(u,v): \|(u,v)\|^2_E\leq \epsilon^{11}\}$  in $B$ into itself.

Moreover, for every two pairs
$(u_1, v_1)$ and $(u_2, v_2)$ in the ball, we have
\begin{align}\label{e:contraction estimate}
\|(\t{u}_1-\t{u}_2, \t{v}_1-\t{v}_2)\|^2_E\leq C\epsilon^{-10}(\|(u_1,v_1)\|_E^2+\|(u_2,v_2)\|_E^2)\|(u_1-u_2, v_1-v_2)\|_E^2.
\end{align}
In fact, set
\beno
\t{U}:=\t{u}_1-\t{u}_2,\quad \t{V}:=\t{v}_1-\t{v}_2,\quad \t{P}:=\t{p}_1-\t{p}_2,
\eeno
then we have
\begin{align}
\left\{
\begin{array}{lll}
-\epsilon^2\Dl \t{U}+\t{P}_x+S_{\t{U}}=R_U,\\[7pt]
-\epsilon^2\Dl \t{V}+\t{P}_y+S_{\t{V}}=R_V,\\[7pt]
\partial_x \t{U}+\partial_y\t{V}=0,  \\[7pt]
 \t{U}(x,y)=\t{U}(x+2\pi,y),\  \t{V}(x,y)=\t{V}(x+2\pi,y),\\[7pt]
\t{U}(x,1)=\t{V}(x,1)=\t{U}(x,0)=\t{V}(x,0)=0,
\end{array}
\right.\nonumber
\end{align}
where
\begin{align*}
R_U \triangleq R_{u_1}-R_{u_2}=&u_2u_{2,x}+v_2u_{2,y}-u_1u_{1,x}-v_1u_{1,y}\\[5pt]
=&(u_2-u_1)\partial_x u_2+u_1\partial_x(u_2-u_1)+(v_2-v_1)\partial_y u_2+v_1\partial_y(u_2-u_1),\\[5pt]
R_V \triangleq R_{v_1}-R_{v_2}=&u_2v_{2,x}+v_2v_{2,y}-u_1v_{1,x}-v_1v_{1,y}\\[5pt]
=&(u_2-u_1)\partial_x v_2+u_1\partial_x(v_2-v_1)+(v_2-v_1)\partial_y v_2+v_1\partial_y(v_2-v_1).
\end{align*}

Thus, following the estimate (\ref{energyfinal1}) line by line, we obtain (\ref{e:contraction estimate}).
Hence, there exist $\epsilon_0>0,\dl_0>0$ such that for any $\epsilon\in (0,\epsilon_0), \dl\in (0,\dl_0)$, the operator
\beno
(u,v)\mapsto (\t{u},\t{v})
\eeno
maps the ball $B:=\{(u,v): \|(u,v)\|^2_E\leq \epsilon^{11}\}$ into itself and is a contraction mapping. This completes the proof of Proposition \ref{existence and error estimate of error equation}.
\end{proof}

Now we can give the proof of Theorem \ref{thmain}.
\begin{proof}
Combining Proposition \ref{existence and error estimate of error equation} and the constructed approximate solution  in (\ref{app equationd}), we easily obtain Theorem \ref{thmain}.
\end{proof}

\section{Construction of approximate solutions}\label{secappro}
\indent
In this section, we construct an approximate solution of the Navier-Stokes equations (\ref{ns}) by the procedure of matched asymptotic expansion which can be explained by the following picture.

{\tiny
\begin{tikzpicture}
  \matrix (m) [matrix of math nodes, row sep=1.5em, column sep=1em]
    { (u^{(0)}_p,v^{(1)}_p) & (u^{(1)}_p,v^{(2)}_p)  &(u^{(1+\f{2}{3})}_p,v^{(1+\f{5}{3})}_p)  & (u^{(1+\f{3}{3})}_p,v^{(1+\f{6}{3})}_p)  & (u^{(1+\f{4}{3})}_p,v^{(1+\f{7}{3})}_p) & (u^{(1+\f{5}{3})}_p,v^{(1+\f{8}{3})}_p) & \cdots\cdots\\
      (u^{(0)}_e,v^{(0)}_e) & (u^{(1)}_e,v^{(1)}_e)  & (u^{(1+\f{2}{3})}_e,v^{(1+\f{2}{3})}_e)  & (u^{(1+\f{3}{3})}_e,v^{(1+\f{3}{3})}_e)  & (u^{(1+\f{4}{3})}_e,v^{(1+\f{4}{3})}_e) & (u^{(1+\f{5}{3})}_e,v^{(1+\f{5}{3})}_e)  & \cdots\cdots\\
      & (\hat{u}^{(1)}_p,\hat{v}^{(1+\f{2}{3})}_p)  & (\hat{u}^{(1+\f{2}{3})}_p,\hat{v}^{(1+\f{4}{3})}_p)  & (\hat{u}^{(1+\f{3}{3})}_p,\hat{v}^{(1+\f{5}{3})}_p)  & (\hat{u}^{(1+\f{4}{3})}_p,\hat{v}^{(1+\f{6}{3})}_p) & (\hat{u}^{(1+\f{5}{3})}_p,\hat{v}^{(1+\f{7}{3})}_p) & \cdots\cdots\\ };
  { [start chain] \chainin (m-2-1);
    \chainin (m-1-1);
    \chainin (m-2-2) [join={node[above,labeled] {\text{upper bound.}}}];
    { [start branch] \chainin (m-1-2); \chainin (m-2-4);
                        \chainin (m-1-4);}
    \chainin (m-3-2);
    \chainin (m-2-3)[join={node[above,labeled] {\text{lower bound.}}}];
    { [start branch] \chainin (m-1-3);\chainin (m-2-6);\chainin (m-1-6);}
     \chainin (m-3-3);
     \chainin (m-2-5);\chainin (m-3-5);
     }
  { [start chain] \chainin (m-2-4);\chainin (m-3-4);\chainin (m-2-6);\chainin (m-3-6);
 }
 { [start chain] \chainin (m-2-5);\chainin (m-1-5);}
\end{tikzpicture}
}

We give some explanations for this picture.
\begin{itemize}
\item We use $(u_e,v_e)$ to denote the Euler asymptotic expansion away from the boundary, while $(u_p,v_p)$ to denote the asymptotic expansion near the upper boundary and $(\hat{u}_p,\hat{v}_p)$ to denote the asymptotic expansion near the lower boundary.
\item The superscript on the Euler and the boundary layer asymptotic expansion represents the matching order of $\e$.
\item The ``upper" and ``down" arrow mean the matched boundary condition between $u_e$ and $u_p,\ \hat{u}_p$, while the slant arrow means the matched boundary condition matching between $v_e$ and $v_p,\ \hat{v}_p$.
\item The equations satisfied by $(u_e,v_e)$, $(u_p,v_p)$ and $(\hat{u}_p,\hat{v}_p)$ will be obtained by solving \eqref{ns} by matching the order of $\e$. The asymptotic expansion will be solved column by column starting from $(u^{(0)}_e,v^{(0)}_e):=(Ay, 0)$.
\end{itemize}

Define the upper boundary layer variable $\zeta=\f{y-1}{\e}$ and the lower boundary layer variable $\eta:=\f{y}{\e^{\f{2}{3}}}$. Now we are ready to write the formal asymptotic expansion away from the boundary and near the boundary.
\begin{remark}\label{rem4.1}
 Let us now explain the reason why we choose the thickness of the lower boundary layer to be $\e^{2/3}$. One can observe that the leading order of our approximate solution, $u_e(y)=A y$ is vanishing ($u_e(0)=0$) at the lower boundary, which indicates the lower boundary layer expansion doesn't contain a leading zero-order  term compared to the upper boundary layer expansion. Therefore, we can not expect the strong boundary layer scaling of $\e$ as the upper boundary. Instead, we should expect a much weaker boundary layer of thickness $\e^{\beta}$, where $\beta\in (0,1]$, and also form high orders approximation by considering $\e^{k\beta}$-order expansion. The consequence of this milder scaling is that the boundary layer equations at the lower boundary will become now uniform in $\e$. The choosing $\beta$ is as follows.

Assuming that at the lower boundary, the scaling variable is $\eta=\f{y}{\e^\beta}$ for $\beta\in(0,1]$ and the leading expansion term is
\begin{align*}
u=Ay+\e\lt(u^{(1)}_e(x,y)+\hat{u}^{(1)}_p(x,\eta)\rt)\\
v=\e\lt(v^{(1)}_e(x,y)+\e^{\beta}\hat{v}^{(1+\beta)}_p(x,\eta)\rt).
\end{align*}
Inserting this expansion in \eqref{ns}$_1$ and taking the leading $\e$ order, we see that
\bes
\e^{1+\beta}\eta\p_x\hat{u}^{(1)}_p(x,\eta)-\e^{3-2\beta}\p^2_{\eta}\hat{u}^{(1)}_p(x,\eta)=0.
\ees
To match the order, we need $1+\beta=3-2\beta$, which indicates that
\bes
\beta=2/3.
\ees
\end{remark} \qed

Now we write down the Euler asymptotic expansion away from the boundary, the boundary layer asymptotic expansions near the boundary and the boundary conditions satisfied by these expansions.

{\noindent\bf Euler expansions away from the boundary}

Away from the boundary, we make the following formal expansions
\begin{align}
&u^{\e}(x,y)=u_e^{(0)}(x,y)+\e u_e^{(1)}(x,y)+\sum^{k_0}_{k=1}\e^{1+\f{1+k}{3}}u^{(1+\f{1+k}{3})}_e(x,y)+\text{h.o.t.},\nn\\[5pt]
&v^{\e}(x,y)=\e v_e^{(1)}(x,y)+\sum^{k_0}_{k=1}\e^{1+\f{1+k}{3}}v^{(1+\f{1+k}{3})}_e(x,y)+\text{h.o.t.},\label{eulerextension}\\[5pt]
&p^{\e}(x,y)=\e p_e^{(1)}(x,y)+\sum^{k_0}_{k=1}\e^{1+\f{1+k}{3}}p^{(1+\f{1+k}{3})}_e(x,y)+\text{h.o.t.}.\nn
\end{align}
Here and in what follows, ``h.o.t." means higher order terms.

{\noindent\bf Boundary layer expansions near the upper boundary $\{y=1\}$}

Near the upper boundary, we make the following formal expansions
\begin{align}
u^{\e}(x,y)=&u_e^{(0)}(x,y)+u_p^{(0)}(x,\zeta)+\e \lt(u_e^{(1)}(x,y)+u_p^{(1)}(x,\zeta)\rt)\nn\\
             &+\sum^{k_0}_{k=1}\e^{1+\f{1+k}{3}}\lt(u^{(1+\f{1+k}{3})}_e(x,y)+u_p^{(1+\f{1+k}{3})}(x,\zeta)\rt)+\text{h.o.t.},\nn\\[5pt]
v^{\e}(x,y)=&\e \lt(v_e^{(1)}(x,y)+v_p^{(1)}(x,\zeta)\rt)\nn\\
              &+\sum^{k_0}_{k=1}\e^{1+\f{1+k}{3}}\lt(v^{(1+\f{1+k}{3})}_e(x,y)+v^{(1+\f{1+k}{3})}_p(x,\zeta)\rt)+\text{h.o.t.}, \label{boundaryextensionu}\\[5pt]
p^{\e}(x,y)=&\e \lt(p_e^{(1)}(x,y)+p_p^{(1)}(x,\zeta)\rt)\nn\\
            &+\sum^{k_0}_{k=1}\e^{1+\f{1+k}{3}}\lt(p^{(1+\f{1+k}{3})}_e(x,y)+p^{(1+\f{1+k}{3})}_p(x,\zeta)\rt)+\text{h.o.t.},\nn
\end{align}
where we make the following assumptions
\bes
v^{(1+\f{2}{3})}_p=v^{(1+\f{4}{3})}_p=p^{(1+\f{2}{3})}_p=p^{(1+\f{2}{3})}_p\equiv0.
\ees
The boundary condition is matched by
\begin{align}
&u_e^{(0)}\big|_{y=1}+u_p^{(0)}\big|_{\zeta=0}=\al+\dl f(x),\nn\\
&u_e^{(1)}\big|_{y=1}+u_p^{(1)}\big|_{\zeta=0}=0,\q u_e^{(1+\f{1+k}{3})}\big|_{y=1}+u_p^{(1+\f{1+k}{3})}\big|_{\zeta=0}=0,\q 1\leq k\in\bN, \nn\\
&v_e^{(1)}\big|_{y=1}+v_p^{(1)}\big|_{\zeta=0}=0,\q v_e^{(1+\f{1+k}{3})}\big|_{y=1}+v_p^{(1+\f{1+k}{3})}\big|_{\zeta=0}=0,\q 1\leq k\in\bN.\nn
\end{align}

{\noindent\bf Boundary layer expansions near the lower boundary $\{y=0\}$}

Near the lower boundary, we make the following formal expansions
\begin{align}
u^{\e}(x,y)=&u_e(y)+\e\lt(u_e^{(1)}(x,y)+\hat{u}_p^{(1)}(x,\eta)\rt)\nn\\
            &+\sum^{k_0}_{k=1}\e^{1+\f{1+k}{3}}\lt(u^{(1+\f{1+k}{3})}_e(x,y)+\hat{u}_p^{(1+\f{1+k}{3})}(x,\eta)\rt)+\text{h.o.t.},\nn\\[5pt]
v^{\e}(x,y)=&\e v_e^{(1)}(x,y)+\sum^{k_0}_{k=1}\e^{1+\f{1+k}{3}}\lt(v^{(1+\f{1+k}{3})}_e(x,y)+\hat{v}^{(1+\f{1+k}{3})}_p(x,\eta)\rt)+\text{h.o.t.}, \label{boundaryextensionl}\\[5pt]
p^{\e}(x,y)=&\e p_e^{(1)}(x,y)+\sum^{k_0}_{k=1}\e^{1+\f{1+k}{3}}\lt(p^{(1+\f{1+k}{3})}_e(x,y)+\hat{p}^{(1+\f{1+k}{3})}_p(x,\eta)\rt)+\text{h.o.t.},\nn
\end{align}
where we make the following assumptions
\bes
\hat{v}^{(1+\f{3}{3})}_p=p^{(1+\f{3}{3})}_p\equiv0.
\ees
The boundary condition is matched by
\begin{align}
&u_e^{(1)}\big|_{y=0}+\hat{u}_p^{(1)}\big|_{\eta=0}=0,\q u_e^{(1+\f{1+k}{3})}\big|_{y=0}+\hat{u}_p^{(1+\f{1+k}{3})}\big|_{\eta=0}=0,\q 1\leq k\in\bN, \nn\\
&v_e^{(1)}\big|_{y=0}+\hat{v}_p^{(1)}\big|_{\eta=0}=0,\q v_e^{(1+\f{1+k}{3})}\big|_{y=0}+\hat{v}_p^{(1+\f{1+k}{3})}\big|_{\eta=0}=0,\q 1\leq k\in\bN.\nn
\end{align}
Next we deduce the equations satisfied by these expansions.
\subsection{Equations for lower order expansions}\label{lowerexpansion}
\subsubsection{Equations for $(u_p^{(0)},v_p^{(1)},p_p^{(1)})$}
\indent

By substituting the upper boundary layer expansion \eqref{boundaryextensionu} into (\ref{ns}) and collecting the $\epsilon-0$th order terms, we obtain
 the following steady boundary layer equations for $(u_p^{(0)},v_p^{(1)},p_p^{(1)})$
\be\label{boundarylayeru0}
\left\{
\begin {array}{ll}
\big(A+u_p^{(0)}\big)\partial_x u_p^{(0)}+\big( v_p^{(1)}- v_p^{(1)}(x,0)\big)\partial_\zeta u_p^{(0)}-\partial^2_{\zeta}u_p^{(0)}=0,\\[5pt]
\p_\zeta p^{(1)}_p=0,\\[5pt]
\partial_x u_p^{(0)}+\partial_\zeta v_p^{(1)}=0,\\[5pt]
u_p^{(0)}(x,\zeta)=u_p^{(0)}(x+2\pi,\zeta),\ v_p^{(1)}(x,\zeta)=v_p^{(1)}(x+2\pi,\zeta),\\[5pt]
u_p^{(0)}\big|_{\zeta=0}=\alpha+\dl f(x)-A,\\[5pt]
\lim\limits_{\zeta\rightarrow -\infty}u_p^{(0)}=\lim\limits_{\zeta\rightarrow -\infty}v_p^{(1)}=\lim\limits_{\zeta\rightarrow -\infty}p_p^{(1)}=0.
\end{array}
\right.
\ee
From the \eqref{boundarylayeru0}$_{2,6}$ for the requirement of $p^{(1)}_p$, we have $p^{(1)}_p\equiv 0$.
\subsubsection{Equations for $(u_e^{(1)},v_e^{(1)},p_e^{(1)})$}
\indent

Inserting the Euler expansion \eqref{eulerextension} into \eqref{ns} and collecting the $\epsilon-1$th order terms, we deduce that $(u_e^{(1)},v_e^{(1)},p_e^{(1)})$ satisfies the following linearized Euler equations
\be\label{middleeuler1}
\left \{
\begin{array}{ll}
u_e(y) \partial_x u_e^{(1)}+v_e^{(1)}\p_y u_e(y)+\partial_x p_e^{(1)}=0,\\[5pt]
u_e(y) \partial_x v_e^{(1)}+\partial_y p_e^{(1)}=0,\\[5pt]
\partial_xu_e^{(1)}+\partial_y v_e^{(1)}=0,\\
 v_e^{(1)}|_{y=1}=-v_p^{(1)}|_{\zeta=0},\ v_e^{(1)}|_{y=0}=0,\ v_e^{(1)}(x,y)=v_e^{(1)}(x+2\pi,y),
\end{array}
\right.
\ee
where $v_p^{(1)}$ is obtained from \eqref{boundarylayeru0}.

\subsubsection{Equations for $(u_p^{(1)},v_p^{(2)},p_p^{(2)})$}

\indent

By substituting the the upper boundary layer expansion \eqref{boundaryextensionu} into (\ref{ns}) and collecting the $\epsilon-1$th order terms, we obtain
 the following steady boundary layer equations for $(u_p^{(1)},v_p^{(2)},p_p^{(2)})$
 \be\label{boundarylayeru1}
 \lt\{
 \begin{aligned}
 &(A+u^{(0)}_p)\p_xu^{(1)}_p+u^{(1)}_p\p_xu^{(0)}_p+(v^{(1)}_e(x,1)+v^{(1)}_p)\p_\zeta u^{(1)}_p+(v^{(2)}_p-v^{(2)}_p(x,0))\p_\zeta u^{(0)}_p\\
 &-\p^2_\zeta u^{(1)}_p=f_{\text{upper},1}(x,\zeta),\\
 &u^{(0)}_p\p_xv^{(1)}_e(x,1)+(A+u^{(0)}_p)\p_xv^{(1)}_p+(v^{(1)}_e(x,1)+v^{(1)}_p)\p_\zeta v^{(1)}_p+\p_\zeta p^{(2)}_p-\p^2_\zeta v^{(1)}_p=0,\\
 &\p_x u^{(1)}_p+ \p_\zeta v^{(2)}_p=0,
 \end{aligned}
 \rt.
 \ee
 where
\bes
 f_{\text{upper,1}}(x,\zeta)=(\zeta\p_\zeta u^{(0)}_p-u^{(0)}_p)\p_xu^{(1)}_e(x,1)-(u^{(1)}_e(x,1)+A\zeta)\p_xu^{(0)}_p-Av^{(1)}_p.
 \ees

\subsubsection{Equations for $(\hat{u}_p^{(1)},\hat{v}_p^{(1+\f{2}{3})},\hat{p}_p^{(1+\f{2}{3})})$}

\indent

By substituting the the lower boundary layer expansion \eqref{boundaryextensionl} into (\ref{ns}) and collecting the $\epsilon-(1+\f{2}{3})$th order terms, we obtain
 the following steady boundary layer equations for $(\hat{u}_p^{(1)},\hat{v}_p^{(1+\f{2}{3})},\hat{p}_p^{(1+\f{2}{3})})$
 \be\label{boundarylayerl1}
 \lt\{
 \begin{aligned}
 &A\eta \p_x \hat{u}^{(1)}_p+A\lt(\hat{v}^{(1+\f{2}{3})}_p-\hat{v}^{(1+\f{2}{3})}_p(x,0)\rt)-\p^2_{\eta}\hat{u}^{(1)}_p+\p_x\hat{p}^{(1+\f{2}{3})}_{p}=0,\\
 &\p_\eta \hat{p}_p^{(1+\f{2}{3})}=0,\\
 &\p_x \hat{u}^{(1)}_p+\p_\eta\hat{v}^{(1+\f{2}{3})}_p=0,\\
 &\hat{u}^{(1)}_p\big|_{\eta=0}=-u^{(1)}_e(x,0),\q \lim\limits_{\eta\rightarrow +\infty}\hat{v}^{(1+\f{2}{3})}_p=0.
 \end{aligned}
 \rt.
 \ee

\subsubsection{Equations for $(u_e^{(1+\f{2}{3})},v_e^{(1+\f{2}{3})},p_e^{(1+\f{2}{3})})$}
\indent

Inserting the Euler expansion \eqref{eulerextension} into \eqref{ns} and collecting the $\epsilon-1\f{2}{3}$th order terms, we deduce that $(u_e^{(1+\f{2}{3})},v_e^{(1+\f{2}{3})},$ $ p_e^{(1+\f{2}{3})})$ satisfies the following linearized Euler equations
\be\label{middleeuler2}
\left \{
\begin{array}{ll}
u_e(y) \partial_x u_e^{(1+\f{2}{3})}+v_e^{(1+\f{2}{3})}\p_y u_e(y)+\partial_x p_e^{(1+\f{2}{3})}=0,\\[5pt]
u_e(y) \partial_x v_e^{(1+\f{2}{3})}+\partial_y p_e^{(1+\f{2}{3})}=0,\\[5pt]
\partial_xu_e^{(1+\f{2}{3})}+\partial_y v_e^{(1+\f{2}{3})}=0,\\
 v_e^{(1+\f{2}{3})}|_{y=1}=0,\ v_e^{(1+\f{2}{3})}|_{y=0}=-\hat{v}_p^{(1+\f{2}{3})}|_{\eta=0},\ v_e^{(1+\f{2}{3})}(x,y)=v_e^{(1+\f{2}{3})}(x+2\pi,y),
\end{array}
\right.
\ee
where $\hat{v}_p^{(1+\f{2}{3})}$ is obtained from \eqref{boundarylayerl1}.

\subsection{Solvability of the lower order asymptotic expansions}
\indent

Now we give the solvability of the equations satisfied by the lower order asymptotic expansions in Section \ref{lowerexpansion}. The order in which we solve these equations are as follows
\begin{align*}
(u_e(y),0)\rightarrow (u_p^{(0)},v_p^{(1)})\rightarrow (u_e^{(1)},v_e^{(1)})\rightarrow(u_p^{(1)},v_p^{(2)})/(\hat{u}_p^{(1)},\hat{v}_p^{(1+\f{2}{3})})\rightarrow (u_e^{(1+\f{2}{3})},v_e^{(1+\f{2}{3})})\cdots.
\end{align*}

\subsubsection{The boundary layer system \eqref{boundarylayeru0} and its solvability}
\indent

We first derive some necessary condition for the solvability of the system \eqref{boundarylayeru0}, one can also refer to \cite{FGLT}.
\begin{lemma}\label{lembw}
If the system \eqref{boundarylayeru0} has a solution $(u_p^{(0)}, v_p^{(1)})$, which satisfies
\beno
&&A+u_p^{(0)}(x,\zeta)>0, \q \|u_p^{(0)}\|_\infty<+\i,\ \forall\ \zeta\leq 0,
\eeno
then there holds
\begin{align}
A^2=\alpha^2+\frac{\al\dl}{\pi}\int_0^{2\pi}f(x)dx+\frac{\dl^2}{2\pi}\int_0^{2\pi}f^2(x)dx.\nn
\end{align}
\end{lemma}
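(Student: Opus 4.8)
The plan is to recognize \eqref{boundarylayeru0} as a Prandtl-type boundary layer system, pass to von Mises coordinates, and then read off the Batchelor--Wood identity from the $x$-periodicity together with the behaviour of the profile as $\zeta\to-\infty$.

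First I would set $U:=A+u_p^{(0)}$ and $V:=v_p^{(1)}-v_p^{(1)}(x,0)$. Since $v_p^{(1)}(x,0)$ is independent of $\zeta$, equation \eqref{boundarylayeru0}$_3$ gives $\partial_xU+\partial_\zeta V=0$, and \eqref{boundarylayeru0}$_1$ becomes the classical Prandtl equation
\[
U\,\partial_xU+V\,\partial_\zeta U-\partial_\zeta^2U=0,\qquad (x,\zeta)\in\bT\times(-\infty,0],
\]
with $U|_{\zeta=0}=\alpha+\dl f(x)$ by \eqref{boundarylayeru0}$_5$, $V|_{\zeta=0}=0$, and $U\to A$ as $\zeta\to-\infty$ by \eqref{boundarylayeru0}$_6$; moreover $0<U<+\infty$ on the whole strip by hypothesis. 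Next I would introduce the stream function $\psi(x,\zeta):=\int_0^\zeta U(x,\zeta')\,d\zeta'$, so that $\partial_\zeta\psi=U>0$, $\partial_x\psi=-V$ and $\psi|_{\zeta=0}=0$. Since $U>0$ and $U\to A>0$, for each fixed $x$ the map $\zeta\mapsto\psi(x,\zeta)$ is a strictly decreasing bijection of $(-\infty,0]$ onto $(-\infty,0]$, and $(x,\zeta)\mapsto(x,\psi)$ is an admissible change of variables under which $U$, viewed as a function of $(x,\psi)$, is still $2\pi$-periodic in $x$, the level sets of $\psi$ being $2\pi$-periodic.

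The key step is the von Mises reduction. Using $\partial_\zeta g=U\,\partial_\psi g$ and $\partial_xg|_\zeta=\partial_xg|_\psi-V\,\partial_\psi g$, the convective terms in the Prandtl equation collapse to $U\,\partial_xU|_\psi$; dividing by $U$ gives
\[
\partial_xU|_\psi=\partial_\psi\big(U\,\partial_\psi U\big)=\tfrac12\,\partial_\psi^2\big(U^2\big),\qquad \psi\in(-\infty,0].
\]
Integrating in $x$ over one period and using periodicity kills the left-hand side, so $G(\psi):=\int_0^{2\pi}U^2(x,\psi)\,dx$ satisfies $G''\equiv0$ and hence is affine on $(-\infty,0]$; being bounded (as $0<U$ is bounded), $G$ must be constant. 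Evaluating at $\psi=0$, i.e.\ $\zeta=0$, gives $G(0)=\int_0^{2\pi}(\alpha+\dl f(x))^2\,dx$, while letting $\psi\to-\infty$, i.e.\ $\zeta\to-\infty$, gives $G\to2\pi A^2$ by dominated convergence. Equating the two and expanding the square,
\[
2\pi A^2=\int_0^{2\pi}\big(\alpha+\dl f(x)\big)^2\,dx=2\pi\alpha^2+2\alpha\dl\int_0^{2\pi}f(x)\,dx+\dl^2\int_0^{2\pi}f^2(x)\,dx,
\]
and dividing by $2\pi$ yields precisely the asserted formula.

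The step I expect to be the main obstacle is making the von Mises reduction rigorous — justifying the change of variables and the differentiation of $G$ under the integral sign — which, beyond the stated bounds $0<U<+\infty$, needs enough regularity of $(u_p^{(0)},v_p^{(1)})$ and fast enough, uniform-in-$x$ decay of $u_p^{(0)}$, $\partial_\zeta u_p^{(0)}$ and $v_p^{(1)}$ as $\zeta\to-\infty$; these are precisely the qualitative properties furnished by the well-posedness result (Proposition \ref{propdcu0}), and, if one prefers, the computation can be carried out on $\zeta\in[-M,0]$ with the boundary contributions at $\zeta=-M$ controlled and then letting $M\to\infty$. Everything else is the elementary algebra above.
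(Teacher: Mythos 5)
Your proposal is correct and follows essentially the same route as the paper: the von Mises change of variables reduces \eqref{boundarylayeru0} to $2\,\partial_x\mathcal{U}=\partial_\psi^2(\mathcal{U}^2)$, integration in $x$ over one period kills the left-hand side, and boundedness as $\psi\to-\infty$ forces $\int_0^{2\pi}\mathcal{U}^2\,dx$ to be constant in $\psi$, whence the Batchelor--Wood identity by comparing $\psi=0$ with $\psi\to-\infty$. Your added remarks on justifying the change of variables via the decay estimates of Proposition \ref{propdcu0} are a reasonable supplement to what the paper leaves implicit.
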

\begin{proof}
We introduce the von Mises variable
\begin{align*}
\psi=\int_0^\zeta\big(A+{u}_p^{(0)}(x,\bar{\zeta})\big)d\bar{\zeta},\  \mathcal{U}(x,\psi)=A+{u}_p^{(0)}(x,\zeta).
\end{align*}
Then from \eqref{boundarylayeru0}, we deduce that $\mathcal{U}$ satisfies
\begin{eqnarray}
\left \{
\begin {array}{ll}
2\mathcal{U}_x=\lt(\mathcal{U}^2\rt)_{\psi\psi},\\[5pt]
\mathcal{U}(x,\psi)=\mathcal{U}(x+2\pi,\psi),\\[5pt]
\mathcal{U}\big|_{\psi=0}=\alpha+\dl f(x),\ \ \lim_{\psi\rightarrow -\infty}\mathcal{U}=A.\label{modified prandtl equation}
\end{array}
\right.
\end{eqnarray}
Integrating the first equation in (\ref{modified prandtl equation}) from $0$ to $2\pi$ about $x$ leads to
\begin{align*}
\frac{\partial^2}{\partial\psi^2}\int_0^{2\pi}(\mathcal{U})^2(z,\psi)dx=0.
\end{align*}
Notice that $\mathcal{U}$ is bounded at $\psi\rightarrow-\infty$, we deduce that
\begin{align*}
\frac{\partial}{\partial\psi}\int_0^{2\pi}\mathcal{U}^2(x,\psi)dz=0.
\end{align*}
Therefore combining the boundary condition in (\ref{modified prandtl equation}), we deduce that
\begin{align*}
A^2=\frac{1}{2\pi}\int_0^{2\pi}\big(\al+\eta f(x)\big)^2dx&=\al^2+\frac{\al\dl}{\pi}\int_0^{2\pi}f(x)dx+\frac{\al^2}{2\pi}\int_0^{2\pi}f^2(x)dx.
\end{align*}
Thus, we complete the proof of this lemma.
\end{proof}

Next we need to solve the steady boundary layer equations (\ref{boundarylayeru0}), one can refer to \cite[Proposition 2.3 and Corollary 2.4]{FGLT}, which can be proven by using iteration argument and contraction mapping theorem for the modified system (\ref{modified prandtl equation}). Here we omit the details and only present the result.
\begin{proposition}\label{propdcu0}
 There exists $\dl_0>0$ such that for any $\dl\in (0,\dl_0)$ and any $j,k,\ell\in \mathbb{N}\cup \{0\}$, the system (\ref{boundarylayeru0}) has a unique solution $(u^{(0)}_p,v^{(1)}_p)$ which satisfies
\begin{align}\label{decayofblu0}
\begin{aligned}
&\int_{-\infty}^0\int_0^{2\pi}\Big|\partial_x^j\partial_\zeta^k ({u}_p^{(0)},{v}_p^{(1)})\Big|^2\big<\zeta\big>^{2\ell}dx d\zeta\leq C_{j,k,\ell}\dl^2, \\
&\int_0^{2\pi}{v}_p^{(1)}(x,\zeta)d x=0,\ \forall\ \zeta\leq 0.
\end{aligned}
\end{align}
\end{proposition}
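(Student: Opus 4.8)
The plan is to follow the route of \cite[Proposition 2.3 and Corollary 2.4]{FGLT}: pass to the von Mises variables already used in the proof of Lemma \ref{lembw}, solve the resulting quasilinear Prandtl-type problem by an iteration plus contraction-mapping argument exploiting the smallness of $\dl$, extract exponential (hence, a fortiori, polynomial to all orders) decay in the layer variable, and finally transfer the estimates back to the $\zeta$ variable and recover $v_p^{(1)}$ from incompressibility.

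\emph{Step 1: reduction.} Under $\psi=\int_0^\zeta\big(A+u_p^{(0)}(x,\bar\zeta)\big)d\bar\zeta$ and $\mathcal U(x,\psi)=A+u_p^{(0)}(x,\zeta)$ — legitimate as long as $A+u_p^{(0)}>0$ — the system \eqref{boundarylayeru0} becomes, exactly as computed in the proof of Lemma \ref{lembw}, the modified system \eqref{modified prandtl equation}, i.e. $2\mathcal U_x=(\mathcal U^2)_{\psi\psi}$ on $\bT\times(-\infty,0]$ with $\mathcal U|_{\psi=0}=\al+\dl f(x)$, $\mathcal U\to A$ as $\psi\to-\infty$, and $x$-periodicity. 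Writing $\mathcal U=A+w$ this reads $w_x=(A+w)w_{\psi\psi}+w_\psi^2$, a nonlinear heat equation in which the periodic variable $x$ plays the role of the time-like variable and $\psi\le 0$ that of the spatial variable; the boundary datum for $w$ at $\psi=0$ is $\al+\dl f-A$, of size $O(\dl)$, and the Batchelor--Wood identity of Lemma \ref{lembw} is precisely what makes the zero-frequency-in-$x$ part of the problem compatible with the decay requirement at $\psi\to-\infty$.

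\emph{Step 2: solving the modified system.} I would run a Picard iteration on $w$ (equivalently on $\mathcal U^2$), at each step solving the linear parabolic problem $w^{n+1}_x=(A+w^n)\,w^{n+1}_{\psi\psi}+\cdots$ with the prescribed $\psi=0$ datum, $x$-periodicity, and decay at $-\infty$. Since the leading coefficient $A+w^n$ is positive and bounded away from $0$ for $\dl$ small, this linear problem has a unique $x$-periodic solution; a Fourier expansion in $x$ shows that every nonzero mode decays exponentially in $|\psi|$ (with rate $\sim\sqrt{|k|/A}$), so weighted $L^2$ energy estimates with the weights $\langle\psi\rangle^{2\ell}$, after differentiating the equation in $x$ and $\psi$, yield for all $j,k,\ell$ a bound $\|\langle\psi\rangle^\ell\partial_x^j\partial_\psi^k w^{n+1}\|_{L^2(\bT\times(-\infty,0])}\lesssim_{j,k,\ell}\dl$. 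The quadratic structure of the corrections $(A+w)w_{\psi\psi}-Aw_{\psi\psi}=w\,w_{\psi\psi}$ and $w_\psi^2$ supplies a gain $O(\dl)$, so on a ball of radius $O(\dl)$ in this weighted space the iteration map is a contraction once $\dl\le\dl_0$; its fixed point is the unique solution $\mathcal U=A+w>0$ of \eqref{modified prandtl equation} with the stated weighted bounds, and the Batchelor--Wood relation is recovered by averaging $2\mathcal U_x=(\mathcal U^2)_{\psi\psi}$ in $x$.

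\emph{Step 3: back to $\zeta$, and $v_p^{(1)}$.} Since $\psi_\zeta=\mathcal U>0$ and $\mathcal U\to A$, the change of variables $\zeta\mapsto\psi$ is a smooth increasing bijection of $(-\infty,0]$ with $\psi\sim A\zeta$ at infinity, hence $\langle\psi\rangle\approx\langle\zeta\rangle$, and the chain rule carries all the weighted estimates over to $u_p^{(0)}(x,\zeta)=\mathcal U(x,\psi(x,\zeta))-A$, giving the first estimate in \eqref{decayofblu0}. Then $v_p^{(1)}$ is defined from \eqref{boundarylayeru0}$_3$ and the decay at $-\infty$ by $v_p^{(1)}(x,\zeta)=-\int_{-\infty}^{\zeta}\partial_xu_p^{(0)}(x,\bar\zeta)\,d\bar\zeta$; it inherits the same weighted $L^2$ bounds, and since $\int_0^{2\pi}u_p^{(0)}(x,\bar\zeta)\,dx$ does not depend on $x$ we get $\int_0^{2\pi}v_p^{(1)}(x,\zeta)\,dx=0$. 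The main obstacle is Step 2: choosing function spaces and linear estimates in which the $x$-periodic (rather than initial-value) parabolic problem is uniquely solvable \emph{and} the polynomial weights $\langle\psi\rangle^\ell$ propagate to all orders, and verifying that the von Mises transformation — itself built out of the unknown — does not degrade those weights when one returns to the $\zeta$ variable; everything else is routine once $\dl$ is taken small.
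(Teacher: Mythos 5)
Your proposal follows essentially the same route the paper indicates: pass to the von Mises variables to reach the modified Prandtl system \eqref{modified prandtl equation}, solve it by iteration and contraction mapping exploiting smallness of $\dl$, and transfer the weighted bounds back, which is exactly the strategy the paper attributes to \cite[Proposition 2.3 and Corollary 2.4]{FGLT} and omits. The only small slip is the stated reason for $\int_0^{2\pi}v_p^{(1)}\,dx=0$ — the correct justification is that $x$-periodicity of $u_p^{(0)}$ gives $\int_0^{2\pi}\partial_x u_p^{(0)}\,dx=0$, so integrating your formula $v_p^{(1)}=-\int_{-\infty}^{\zeta}\partial_x u_p^{(0)}\,d\bar\zeta$ over $x$ yields zero.
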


\subsubsection{The linearized Euler system for $(u_e^{(1)}, v_e^{(1)}, p_e^{(1)})$ and its solvability}
\begin{proposition}\label{propeulerorder1}
There exists $\dl_0>0$ such that for any $\dl\in(0,\dl_0)$, the linearized Euler equations (\ref{middleeuler1}) have a solution $(u_e^{(1)}, v_e^{(1)}, p_e^{(1)})$ which satisfies
\begin{align}\label{euler1esti}
\|\partial^j_x\partial^k_y(u_e^{(1)},v_e^{(1)})\|_2\leq C_{j,k}\dl, \ \forall\ j,k\geq 0.
\end{align}
\end{proposition}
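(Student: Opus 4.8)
The plan is to exploit that, for the Couette profile $u_e(y)=Ay$ (so $u_e''\equiv0$), the linearized steady Euler system \eqref{middleeuler1} collapses, after elimination of the pressure, to a Rayleigh--type equation with no Rayleigh term, which forces $v_e^{(1)}$ to be harmonic; one then builds the solution explicitly by harmonic extension and reads off \eqref{euler1esti} from the Fourier series, the size $\dl$ being inherited entirely from the boundary datum $v_p^{(1)}\big|_{\zeta=0}$ via Proposition \ref{propdcu0}.

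First I would record the properties of the datum. By \eqref{decayofblu0}, $v_p^{(1)}(\cdot,0)$ is mean--zero in $x$, and the one--dimensional trace inequality on $\zeta\in(-1,0)$ together with the weighted bounds in \eqref{decayofblu0} (with $\ell=0$, $k=0,1$) gives $\|\p_x^j v_p^{(1)}(\cdot,0)\|_{L^2(\bT)}\le C_j\dl$ for every $j\ge0$; so $v_p^{(1)}(\cdot,0)$ is a smooth, $2\pi$--periodic, mean--zero function of size $\dl$ in every $H^j_x$. Write its Fourier series as $v_p^{(1)}(x,0)=\sum_{k\ne0}c_k\,e^{ikx}$.

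Next I would construct the solution. Let $v_e^{(1)}$ be the harmonic extension of the Dirichlet data $v_e^{(1)}\big|_{y=1}=-v_p^{(1)}\big|_{\zeta=0}$, $v_e^{(1)}\big|_{y=0}=0$, i.e.
\[
v_e^{(1)}(x,y)=-\sum_{k\ne0}c_k\,\f{\sinh(ky)}{\sinh k}\,e^{ikx},
\]
whose zero Fourier mode vanishes, consistently with the mean--zero data. Let $u_e^{(1)}$ be determined by $\p_x u_e^{(1)}=-\p_y v_e^{(1)}$ on the nonzero modes and by $u_{e,0}^{(1)}\equiv0$ on the zero mode — note that \eqref{middleeuler1} imposes no condition on the $x$--average of the horizontal velocity (this is the gauge freedom of the linearized Euler operator about a shear flow), so this zero mode must be fixed by hand. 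Finally recover $p_e^{(1)}$ by integrating \eqref{middleeuler1}$_2$ in $y$ and using \eqref{middleeuler1}$_1$ at $y=0$ to fix $\p_x p_e^{(1)}$ there; the compatibility $\p_y\p_x p_e^{(1)}=\p_x\p_y p_e^{(1)}$ needed for a single--valued pressure is precisely $\p_x\Dl v_e^{(1)}=0$, which holds. A direct frequency--by--frequency substitution, using $u_e''=0$ and $\Dl v_e^{(1)}=0$, then verifies that $(u_e^{(1)},v_e^{(1)},p_e^{(1)})$ solves \eqref{middleeuler1}; conversely, subtracting $\p_x$ of \eqref{middleeuler1}$_2$ from $\p_y$ of \eqref{middleeuler1}$_1$ and using $u_e''=0$, incompressibility and continuity across $y=0$ shows that any solution has $v_e^{(1)}$ harmonic, so this construction captures all solutions up to the shear gauge.

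For the estimate: on $y\in[0,1]$ and $k\ge1$ one has $0\le\sinh(ky)/\sinh k\le1$ and $\cosh(ky)/\sinh k\le\coth1$, and each $y$--derivative multiplies such a quotient by at most $|k|\coth1$; hence the Fourier coefficients satisfy $|\p_x^j\p_y^m\widehat{v_e^{(1)}}_k(y)|+|\p_x^j\p_y^m\widehat{u_e^{(1)}}_k(y)|\le C_{j,m}|k|^{j+m}|c_k|$ uniformly in $y\in[0,1]$, and summing over $k$,
\[
\|\p_x^j\p_y^m(u_e^{(1)},v_e^{(1)})\|_{L^2(\bT\times[0,1])}^2\le C_{j,m}\sum_{k\ne0}|k|^{2(j+m)}|c_k|^2=C_{j,m}\,\|v_p^{(1)}(\cdot,0)\|_{H^{j+m}_x}^2\le C_{j,m}\dl^2,
\]
which is \eqref{euler1esti}; the rapid decay of $c_k$ also gives $C^\infty$ convergence of the series, so the constructed fields are smooth. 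I expect the only genuinely delicate points to be conceptual rather than computational — recognizing that the Couette profile degenerates the Rayleigh operator so that $v_e^{(1)}$ is harmonic, and correctly handling the indeterminacy of the $x$--mean of $u_e^{(1)}$, for which \eqref{middleeuler1} supplies no boundary condition and one must make an explicit gauge choice.
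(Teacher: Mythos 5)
Your proposal is correct and follows essentially the same route as the paper: eliminate the pressure to see that $u_e''\equiv 0$ reduces the Rayleigh operator so that $v_e^{(1)}$ is harmonic, solve the resulting Dirichlet problem with the mean-zero trace data of size $\dl$, recover $u_e^{(1)}$ from incompressibility (the paper fixes the gauge by $u_e^{(1)}=-\int_0^x\p_y v_e^{(1)}\,d\bar x$ rather than by killing the zero mode, but either choice is admissible since \eqref{middleeuler1} imposes no condition on the $x$-average of $u_e^{(1)}$), and reconstruct $p_e^{(1)}$ by integration, with the compatibility supplied by $\Dl v_e^{(1)}=0$. Your explicit $\sinh$-kernel Fourier representation just makes quantitative what the paper states abstractly, and the resulting estimate \eqref{euler1esti} is obtained the same way.
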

\begin{proof}
Subtracting $\p_x(\ref{middleeuler1})_2$ from $\p_y(\ref{middleeuler1})_1$ to eliminate the pressure $p_e^{(1)}$, we obtain that
\bes
\Dl v^{(1)}_e=0.
\ees
Then we can obtain $v_e^{(1)}$ by solving the following linear boundary value problem
\be\label{eulerfirstorder1}
\lt\{
\bali
&\Dl v^{(1)}_e=0,\\
&v_e^{(1)}\big|_{y=0}=0,\q v_e^{(1)}\big|_{y=1}=-{v}_p^{(1)}\big|_{\zeta=0}\\
&v_e^{(1)}(x,y)=v_e^{(1)}(x+2\pi,y).
\eali
\rt.
\ee

Noting (\ref{decayofblu0}), it is not hard to deduce that
\begin{align*}
\|\partial^j_x\partial^k_y v_e^{(1)}\|_2\leq C_{j,k}\dl, \ \forall\ k,j\geq 0.
\end{align*}
Also by noting that
\bes
\int^{2\pi}_0 v_e^{(1)} dx\big|_{y=0}=0,\q \int^{2\pi}_0 v_e^{(1)} dx\big|_{y=1}=-\int^{2\pi}_0{v}_p^{(1)}dx\big|_{\zeta=0}=0,
\ees
and integrating \eqref{eulerfirstorder1} with respect to $x$ variable we get that
\bes
\int^{2\pi}_0 v_e^{(1)}(x,y) dx=0 \q \forall\ y\in [0,1].
\ees
After $v^{(1)}_e$ is given, we define
\bes
u^{(1)}_e(x,y)=-\int^x_0 \p_yv^{(1)}_e(\bar{x},y)d\bar{x},
\ees
which satisfies
\begin{eqnarray}
\left \{
\begin {array}{ll}
\partial_x u_e^{(1)}+\partial_yv_e^{(1)}=0,\\[5pt]
u_e^{(1)}(x,y)=u_e^{(1)}(x+2\pi,y).\nonumber
\end{array}
\right.
\end{eqnarray}

After obtaining $(u_e^{(1)}, v_e^{(1)})$, we construct $p_e^{(1)}$ as following
\begin{align*}
p_e^{(1)}(x,y):=\phi(y)- Ayu_e^{(1)}(x,y)-A \int^x_0v_e^{(1)}(\bar{x},y)d\bar{x},
\end{align*}
where $\phi(y)$ is a function satisfying
\begin{align*}
\phi'(y)+Ay\partial_x v_e^{(1)}(0,y)=0.
\end{align*}
Combining the equations of $(u_e^{(1)}, v_e^{(1)})$, it's direct to obtain
\begin{align*}
u_e \partial_x v_e^{(1)}+\partial_yp_e^{(1)}=0.
\end{align*}
Hence, $(u_e^{(1)}, v_e^{(1)},p_e^{(1)})$ solves the equation (\ref{middleeuler1}) and satisfies (\ref{euler1esti}).

\end{proof}

\subsubsection{The linearized Prandtl system for  $(u_p^{(1)},v_p^{(2)})$ and its solvability}
\indent

In this subsection, we consider the solvability of \eqref{boundarylayerl1}.

\begin{proposition}\label{propdcu1}
There exists $\dl_0>0$ such that for any $\dl\in(0,\dl_0)$, the equations (\ref{boundarylayeru1}) have a unique solution $(u_p^{(1)},v_p^{(2)})$ which satisfies
\begin{align*}
\begin{aligned}
&\int_{-\infty}^0\int_0^{2\pi}\big|\partial_x^j\partial_\zeta^k \big({u}_p^{(1)}-A_{1},v_p^{(2)}\big)\big|^2\big<\zeta\big>^{2\ell}dx d\zeta\leq C_{j,k,\ell}\dl^2,\\
&\int_0^{2\pi}v_p^{(2)}(x,\zeta)dx=0, \ \forall\ \zeta\leq 0,
\end{aligned}
\end{align*}
where
$A_{1}:=\lim\limits_{\zeta\rightarrow -\infty}u_p^{(1)}(x,\zeta)$ is a constant which satisfies $|A_{1}|\leq C\dl.$
\end{proposition}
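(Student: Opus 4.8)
The plan is to follow the treatment of the leading-order boundary layer in \cite{FGLT}: recast \eqref{boundarylayeru1} through the von Mises change of variable inherited from the leading order, and solve the resulting linear parabolic problem by a contraction argument, using that $u_p^{(0)},v_p^{(1)}$ (Proposition \ref{propdcu0}) and $u_e^{(1)},v_e^{(1)}$ (Proposition \ref{propeulerorder1}) are all $O(\dl)$. First, the incompressibility condition in \eqref{boundarylayeru1} together with $v_p^{(2)}\to0$ as $\zeta\to-\infty$ gives $v_p^{(2)}(x,\zeta)=-\int_{-\infty}^{\zeta}\partial_x u_p^{(1)}(x,\bar\zeta)\,d\bar\zeta$, hence $v_p^{(2)}-v_p^{(2)}(x,0)=\int_{\zeta}^{0}\partial_x u_p^{(1)}\,d\bar\zeta$; substituting this into the first equation of \eqref{boundarylayeru1} reduces the system to a single nonlocal scalar equation for $w:=u_p^{(1)}$, with $w|_{\zeta=0}=-u_e^{(1)}(x,1)$ and $w$ bounded as $\zeta\to-\infty$ (the second equation serves only to recover $p_p^{(2)}$ afterwards by one integration in $\zeta$, all of its remaining terms being already known). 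Introducing $\psi=\int_0^\zeta(A+u_p^{(0)}(x,\bar\zeta))\,d\bar\zeta$ as for the leading order, one computes $\psi_x=v_p^{(1)}(x,0)-v_p^{(1)}(x,\zeta)$, and since the matching condition gives $v_e^{(1)}(x,1)=-v_p^{(1)}(x,0)$, the convective combination $(A+u_p^{(0)})\partial_x+(v_e^{(1)}(x,1)+v_p^{(1)})\partial_\zeta$ becomes simply $(A+u_p^{(0)})\partial_x$ in $(x,\psi)$-coordinates. Writing $\partial_\zeta=(A+u_p^{(0)})\partial_\psi$ and dividing by $A+u_p^{(0)}\approx A$, the reduced equation takes the form
\bes
\partial_x w-A\,\partial_\psi^2 w=\mathcal{M}_\dl[w]+\tilde f,
\ees
where $\mathcal{M}_\dl$ collects all variable-coefficient terms — $u_p^{(0)}\partial_\psi^2 w$, a first-order term $(\partial_\psi u_p^{(0)})\partial_\psi w$, a zeroth-order term built from $\partial_x u_p^{(0)}$, and the nonlocal term $\mathcal{L}_{\mathrm{nl}}[w]$ coming from $(v_p^{(2)}-v_p^{(2)}(x,0))\partial_\zeta u_p^{(0)}$ — each carrying a factor $O(\dl)$ and decaying faster than any polynomial in $\psi$, while $\|\tilde f\|\lesssim\dl$ in the weighted norms by Propositions \ref{propdcu0} and \ref{propeulerorder1} (the $O(\dl)$ pieces of $f_{\text{upper},1}$ being $-A\zeta\,\partial_x u_p^{(0)}$ and $-Av_p^{(1)}$, the remainder being $O(\dl^2)$).

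Next I would develop the linear theory of $L_0:=\partial_x-A\partial_\psi^2$ on $\mathbb{T}_x\times(-\infty,0]_\psi$, with trace prescribed at $\psi=0$ and boundedness imposed as $\psi\to-\infty$. Expanding in Fourier modes $e^{ikx}$, for $k\neq0$ the bounded solution of $ik\widehat w_k-A\widehat w_k''=\widehat g_k$ on $(-\infty,0]$ is the one generated by the characteristic root with positive real part, and it decays as $\psi\to-\infty$; for $k=0$, boundedness of $\widehat w_0$ forces $\widehat w_0'(0)=-A^{-1}\int_{-\infty}^0\widehat g_0$ and yields the finite limit
\bes
A_1:=\lim_{\psi\to-\infty}\widehat w_0=\frac1{2\pi}\int_0^{2\pi}w(x,0)\,dx+A^{-1}\int_{-\infty}^0\!\!\int_{-\infty}^{\bar\psi}\widehat g_0(\tilde\psi)\,d\tilde\psi\,d\bar\psi .
\ees
Hence the far-field limit exists, is automatically independent of $x$ (the nonzero modes decay), and $|A_1|$ is controlled by the weighted norms of the boundary data and of the right-hand side. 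Together with weighted $L^2$ estimates for $L_0$ — multiplying the equation by $\langle\psi\rangle^{2\ell}(w-A_1)$ and by $\langle\psi\rangle^{2\ell}\partial_\psi^{2k}(w-A_1)$, the commutators $[\partial_\psi^2,\langle\psi\rangle^{2\ell}]$ being of strictly lower order in the weight — this shows that $L_0^{-1}$ maps rapidly decaying data to rapidly decaying solutions on the scale of weighted Sobolev spaces used in Proposition \ref{propdcu0}.

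With this linear theory in hand I run a contraction mapping: let $\Phi(w)$ be the unique bounded solution of $L_0\Phi(w)=\mathcal{M}_\dl[w]+\tilde f$ with trace $-u_e^{(1)}(x,1)$ at $\psi=0$. Since every term of $\mathcal{M}_\dl$ carries a factor $O(\dl)$ and $\|\tilde f\|\lesssim\dl$ in the weighted norms, $\Phi$ maps a ball of radius $\sim\dl$ into itself and satisfies $\|\Phi(w_1)-\Phi(w_2)\|\le C\dl\|w_1-w_2\|$ once $\dl$ is small — the original system being linear, the fixed point is needed only to absorb the small variable coefficients, not any genuine nonlinearity, so this step is elementary. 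The fixed point, transported back to the $(x,\zeta)$ variables, is $u_p^{(1)}$; it is $O(\dl)$, its far-field value $A_1$ satisfies $|A_1|\le C\dl$ by the displayed formula, and $v_p^{(2)}=-\int_{-\infty}^\zeta\partial_x u_p^{(1)}\,d\bar\zeta$ is $O(\dl)$ with $\int_0^{2\pi}v_p^{(2)}(x,\zeta)\,dx=0$ by $x$-periodicity. Uniqueness follows because the difference of two bounded solutions solves the homogeneous problem, forcing $\|w\|\le C\dl\|w\|$, and the estimates for all $(j,k,\ell)$ are obtained by differentiating the equation in $x$ and $\zeta$ and propagating the weights exactly as for $L_0$, i.e.\ by a standard bootstrap.

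I expect the main difficulty to be the weighted energy analysis in the presence of the nonlocal term $\mathcal{L}_{\mathrm{nl}}[w]$: one must bound $\int_\zeta^0\partial_x w$ in $\langle\psi\rangle^{2\ell}$-weighted $L^2$ by $\|\langle\psi\rangle^\ell\partial_x w\|_{L^2}$ — legitimate precisely because $\partial_x w$ (unlike $w$ itself, which tends to the possibly nonzero constant $A_1$) decays — and one must check that the commutators produced when integrating by parts against $\langle\psi\rangle^{2\ell}$ are genuinely subordinate, so that they are absorbed into the good diffusion term $A\|\langle\psi\rangle^\ell\partial_\psi w\|_{L^2}^2$. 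A secondary point is that the von Mises map is a diffeomorphism with Jacobian $A+u_p^{(0)}\in[A/2,2A]$ and that the $\zeta$- and $\psi$-weighted norms are equivalent, both of which follow from $\|u_p^{(0)}\|_\infty\le C\dl$; apart from these, the argument reduces to the elementary half-line theory of $\partial_x-A\partial_\psi^2$ described above.
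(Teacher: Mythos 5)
Your proposal is correct, and it supplies the analytic content that the paper itself does not display: the paper's own ``proof'' consists of homogenizing the trace with the corrector $\kappa$ (setting $u=u_p^{(1)}+u_e^{(1)}(x,1)\kappa(\zeta)$, $v=v_p^{(2)}-v_p^{(2)}(x,0)-u_e^{(1)}(x,1)\int_0^\zeta\kappa$), observing that the resulting divergence-form system is exactly (2.47) of \cite{FGLT}, citing that reference, and then recovering $p_p^{(2)}$ by one $\zeta$-integration. You instead reduce to a single nonlocal scalar equation, pass to the von Mises coordinate built from the leading profile (your verification that $v_e^{(1)}(x,1)+v_p^{(1)}=v_p^{(1)}-v_p^{(1)}(x,0)$ is the stream-function of $(A+u_p^{(0)},\cdot)$, so the convective part collapses to $(A+u_p^{(0)})\partial_x|_\psi$, is exactly right), and perturb off the explicit half-line theory of $\partial_x-A\partial_\psi^2$ with periodic $x$. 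What your route buys is an explicit formula for the far-field constant $A_1$ from the zero Fourier mode, hence a transparent proof of $|A_1|\le C\dl$, which the paper only asserts; what it costs is that two technical points must be checked rather than inherited from \cite{FGLT}: (i) to absorb the second-order perturbation $u_p^{(0)}\partial_\psi^2 w$ in the contraction you need the mode-uniform maximal-regularity bound $\|\widehat w_k''\|\lesssim\|\widehat g_k\|$ (true, since $|k|\,\|\widehat w_k\|\le\|\widehat g_k\|$ from the imaginary part of the energy identity, and the $k=0$ mode is a pure double integration), and (ii) the zero-mode inversion loses two powers of the weight $\langle\psi\rangle$, which is harmless only because every coefficient in $\mathcal{M}_\dl$ decays super-polynomially by Proposition \ref{propdcu0}, so one should run the contraction at a fixed large weight and bootstrap in $(j,k,\ell)$ afterwards. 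You flag both issues, so I regard the argument as complete in outline and consistent with the statement.
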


Let $\kappa\in C_c^\infty ((-\infty,0])$ satisfy
\begin{align*}
\kappa(0)=1,\ \int^0_{-\infty}\kappa(\zeta)d\zeta=0.
\end{align*}
For simplicity, we set
\begin{align*}
\bar{u}:&=A+u_p^{(0)}, \ \bar{v}:=v_e^{(1)}(x,1)+v_p^{(1)},\\
u:&=u_p^{(1)}+u_e^{(1)}(x,1)\kappa(\zeta),\\
v:&=v_p^{(2)}-v_p^{(2)}(x,0)-u_e^{(1)}(x,1)\int_0^\zeta \kappa(\bar{\zeta})d\bar{\zeta}.
\end{align*}
Then, the equations (\ref{boundarylayeru1}) reduce to
\begin{eqnarray*}
\left \{
\begin {array}{ll}
\bar{u}\partial_x u+\bar{v}\partial_\zeta u+u\partial_x \bar{u}+v\partial_\zeta\bar{u}-\partial^2_{\zeta}u=\tilde{f},\\[7pt]
\partial_x u+\partial_\zeta v=0,\\[5pt]
u(x,\zeta)=u(x+2\pi,\zeta),\ v(x,\zeta)=v(x+2\pi,\zeta)\\[5pt]
u|_{\zeta=0}=v|_{\zeta=0}=0,\  \lim\limits_{\zeta\rightarrow -\infty}\partial_\zeta u=0,
\end{array}
\right.
\end{eqnarray*}
where $\tilde{f}(x,\zeta)$ is $2\pi$-periodic function and decays fast as $\zeta\rightarrow -\infty$. This system is exactly (2.47) in \cite{FGLT}. Reader can refer there for further detailed proof of Proposition \ref{propdcu1}. Here we omit the details.

Next, we construct the pressure $p_p^{(2)}(x,\zeta)$. Thanks to \eqref{boundarylayeru1}$_2$, we onsider the equation
\begin{align*}
\partial_\zeta p_p^{(2)}(x,\zeta)=g_1(x,\zeta), \quad \lim_{\zeta\rightarrow -\infty}p_p^{(2)}(x,\zeta)=0,
\end{align*}
where $g_1(x,\zeta)$  is defined by
\begin{align*}
g_1(x,\zeta)=-\lt[u^{(0)}_p\p_xv^{(1)}_e(x,1)+(A+u^{(0)}_p)\p_xv^{(1)}_p+(v^{(1)}_e(x,1)+v^{(1)}_p)\p_\zeta v^{(1)}_p-\p^2_\zeta v^{(1)}_p\rt].
\end{align*}
Actually, we take $ p_p^{(2)}(x, \zeta)=\int^x_{-\i}g_1(x,\bar{\zeta})d\bar{\zeta}$.
\subsubsection{The linearized boundary layer system for  $(\hat{u}_p^{(1)},\hat{v}_p^{(1+\f{2}{3})},\hat{p}^{(1+\f{2}{3})}_p)$ and its solvability}
\indent

In this subsection, we consider the solvability of \eqref{boundarylayerl1}.

\begin{proposition}\label{propdcl1}
There exists $\dl_0>0$ and a constant $\hat{A}_1$ such that for any $\dl\in(0,\dl_0)$, the equations (\ref{boundarylayerl1}) have a unique solution $(\hat{u}_p^{(1)},\hat{v}_p^{(1+\f{2}{3})},\hat{p}^{(1+\f{2}{3})}_p)$ which satisfies
\begin{align} \label{decayofbll1}
\begin{aligned}
&\int_{0}^{+\i}\int_0^{2\pi}\big|\partial_x^j\p^k_{\eta}\big(\hat{u}_p^{(1)}-\hat{A}_{1},\hat{v}_p^{(1+\f{2}{3})}\big)\big|^2\langle\eta\rangle^{2\ell} dx d\eta\leq C_{j,k,\ell}\dl^2, \ \forall\ \ell\in \mathbb{N},\\
&\int_0^{2\pi}\hat{v}_p^{(1+\f{2}{3})}(x,\eta)dx=0, \ \forall\ \eta\geq 0,
\end{aligned}
\end{align}
where $\hat{A}_{1}=\lim\limits_{\eta\rightarrow+\i}\hat{u}_p^{(1)}$.
\end{proposition}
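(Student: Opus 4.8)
The plan is to decouple the linear system \eqref{boundarylayerl1} by a Fourier expansion in the periodic variable $x$, solve the resulting family of ordinary differential equations in $\eta$ essentially explicitly, and then sum the series. The linearity of \eqref{boundarylayerl1} (in contrast with the nonlinear upper Prandtl system \eqref{boundarylayeru0}) is what makes this feasible; the degeneracy caused by the vanishing of the leading Euler flow $Ay$ at $\eta=0$ shows up through the $\p_x$--transport coefficient being exactly $A\eta$, and after one $\eta$--differentiation the horizontal momentum balance becomes an Airy equation. First, \eqref{boundarylayerl1}$_2$ forces $\hat p^{(1+\f23)}_p=\hat p^{(1+\f23)}_p(x)$, independent of $\eta$. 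Writing $\hat u_p^{(1)}=\sum_n\hat u_n(\eta)e^{inx}$, $\hat v_p^{(1+\f23)}=\sum_n\hat v_n(\eta)e^{inx}$ and letting $g_n$ be the Fourier coefficients of $-u_e^{(1)}(x,0)$, Proposition \ref{propeulerorder1} together with the trace inequality gives $u_e^{(1)}(\cdot,0)\in H^\infty(\mathbb{T})$ with size $O(\dl)$, hence $|g_n|\le C_N\langle n\rangle^{-N}\dl$ for every $N$. For $n=0$, \eqref{boundarylayerl1}$_3$ and $\lim_{\eta\to+\i}\hat v_p^{(1+\f23)}=0$ force $\hat v_0\equiv0$, so \eqref{boundarylayerl1}$_1$ gives $\hat u_0''=0$; since $\hat u_0-\hat A_1$ must be square integrable against $\langle\eta\rangle^{2\ell}$, this pins $\hat u_0\equiv\hat A_1$ with $\hat A_1:=-\f1{2\pi}\int_0^{2\pi}u_e^{(1)}(x,0)\,dx$, so $|\hat A_1|\le C\dl$, and it also yields $\int_0^{2\pi}\hat v_p^{(1+\f23)}\,dx=0$ for every $\eta\ge0$.

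For $n\ne0$, I would use \eqref{boundarylayerl1}$_3$ to write $\hat v_n-\hat v_n(0)=-in\int_0^\eta\hat u_n$, turning \eqref{boundarylayerl1}$_1$ into a closed nonlocal equation for $\hat u_n$; differentiating it in $\eta$ and setting $\hat w_n:=\hat u_n'$ removes both the nonlocal term and the pressure and leaves the Airy equation $\hat w_n''=inA\,\eta\,\hat w_n$. Since $\arg\big((inA)^{1/3}\big)=\pm\f\pi6$ lies strictly inside the sector $|\arg z|<\f\pi3$ on which $\mathrm{Ai}(z)\sim e^{-\frac23 z^{3/2}}$ decays, the only solution with subexponential growth as $\eta\to+\i$ is $\hat w_n(\eta)=c_n\,\mathrm{Ai}\big((inA)^{1/3}\eta\big)$. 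The constraint $\hat u_n(\infty)=0$ (a nonzero limit would violate the weighted integrability demanded by \eqref{decayofbll1} when $n\ne0$), combined with the identity $\int_0^\infty\mathrm{Ai}\big((inA)^{1/3}s\big)\,ds=\f13(inA)^{-1/3}\ne0$ (rotate the integration ray back to the positive real axis, which is legitimate inside the decay sector), determines $c_n=-3(inA)^{1/3}g_n$; then $\hat u_n(\eta)\propto\int_\eta^\infty\mathrm{Ai}\big((inA)^{1/3}s\big)\,ds$, $\hat v_n(\eta)=in\int_\eta^\infty\hat u_n$ are fixed uniquely, and $\hat p^{(1+\f23)}_p$ is recovered by integrating in $x$ the $\eta$--independent quantity $\p_\eta^2\hat u_p^{(1)}\big|_{\eta=0}$ (consistent because its zero mode vanishes, $\hat u_0''=0$). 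Uniqueness is then immediate: any solution of \eqref{boundarylayerl1} obeying \eqref{decayofbll1} has Fourier modes satisfying exactly these ODEs and decay requirements, which we have just shown to have a unique solution (the pressure up to an irrelevant additive constant).

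It remains to establish \eqref{decayofbll1}. By Parseval the left side equals $2\pi\sum_{n\ne0}|n|^{2j}\int_0^\infty|\p_\eta^k\hat u_n|^2\langle\eta\rangle^{2\ell}\,d\eta$ plus the analogous sum for $\hat v$ (the $n=0$ term drops since $\hat u_0-\hat A_1\equiv0$). Rescaling $\sigma=(|n|A)^{1/3}\eta$ and using the super-exponential decay of $\mathrm{Ai}$ and all its derivatives throughout the sector, each summand is bounded by $C_{k,\ell}\langle n\rangle^{P}|g_n|^2$ for a fixed power $P=P(j,k,\ell)$; summing against $|g_n|^2\le C_N\langle n\rangle^{-2N}\dl^2$ with $N$ large yields the claimed bound $C_{j,k,\ell}\dl^2$ and, at the same time, shows that the series and all its derivatives converge uniformly, so they define a genuine smooth solution. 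The main obstacle — and essentially the only delicate point — is this last bookkeeping: one must verify that the Airy rescaling converts the frequency $n$ into nothing worse than polynomial prefactors, so that they are absorbed by the rapid decay of the Euler trace's Fourier coefficients, and, upstream of it, one must justify rigorously the selection of the decaying Airy branch and the non-vanishing of $\int_0^\infty\mathrm{Ai}\big((inA)^{1/3}s\big)\,ds$. An alternative route, closer to the linearized Prandtl analysis carried out above, would replace the explicit solution by weighted $\langle\eta\rangle$--energy estimates applied directly to the degenerate transport--diffusion equation $-\p_\eta^2 w+A\eta\,\p_x w=0$ for $w=\p_\eta\hat u_p^{(1)}$, together with a continuity/approximation argument for existence; there the difficulty migrates to controlling the boundary contributions at $\eta=0$ produced by the degenerate weight.
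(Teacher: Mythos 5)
Your Fourier--Airy route is correct, but it is genuinely different from the paper's argument, so let me compare the two.

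\textbf{What the paper does.} The paper first normalizes the pressure by the explicit choice $\hat{p}_p^{(1+2/3)}=A\int_0^x\hat{v}_p^{(1+2/3)}(\bar x,0)\,d\bar x$, lifts the boundary data by subtracting $\kappa(\eta)u_e^{(1)}(x,0)$ and its divergence-free companion, and thereby reduces \eqref{boundarylayerl1} to the forced degenerate system $A\eta\,\p_x\hat u+A\hat v-\p_\eta^2\hat u=f_{\mathrm{b},1}$, $\p_x\hat u+\p_\eta\hat v=0$ with homogeneous boundary conditions and a compactly supported source. Existence is obtained by adding a vanishing regularization $\nu^2\p_x^2$ and passing to the limit, and the heart of the proof is a weighted $L^2$ hierarchy: the zero mode is integrated explicitly, while the nonzero-frequency part is controlled by multiplying the equation by $\eta\hat u_{\neq0}$, $\p_x\hat u_{\neq0}$ and $\eta^2\p_x\hat u_{\neq0}$, combining the three resulting identities, and then inducting on the power of the weight $\langle\eta\rangle$ to reach arbitrary $\ell$; $k$-derivatives in $\eta$ are recovered directly from the equation. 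Your closing remark about an ``alternative route'' via weighted energy estimates is, in spirit, what the paper actually does --- except that it works with $(\hat u,\hat v)$ rather than with $w=\p_\eta\hat u$, which neatly sidesteps the degenerate-weight boundary terms at $\eta=0$ you worried about.

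\textbf{What you do instead.} You diagonalize in the periodic $x$-variable, treat each Fourier mode as a scalar ODE problem in $\eta$, observe that differentiating in $\eta$ eliminates both the nonlocal $\hat v_n$ term and the pressure so that $\hat w_n=\hat u_n'$ solves the Airy equation $\hat w_n''=inA\eta\,\hat w_n$, and then pick out the decaying branch $\mathrm{Ai}\bigl((inA)^{1/3}\eta\bigr)$. The boundary normalization $\int_0^\infty\mathrm{Ai}(s)\,ds=\tfrac13$, together with the contour rotation inside the sector $|\arg z|<\pi/3$, gives the explicit constant $c_n=-3(inA)^{1/3}g_n$; the Airy rescaling $\sigma=(|n|A)^{1/3}\eta$ converts each weighted $L^2$ integral into a universal integral of $\Psi^{(k)}$ times a power of $|n|$, which the rapid decay of the Fourier coefficients of the smooth trace $u_e^{(1)}(\cdot,0)$ then absorbs. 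Your handling of the $n=0$ mode and the identification $\hat A_1=-\tfrac1{2\pi}\int_0^{2\pi}u_e^{(1)}(x,0)\,dx$ agree with what the paper's computation of $f_{0,\text{b},1}$ yields after undoing the boundary lift, and your recovery of $\hat p^{(1+2/3)}_p$ by evaluating \eqref{boundarylayerl1}$_1$ at $\eta=0$ is consistent with the paper's normalization up to an additive constant.

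\textbf{Trade-offs.} Your explicit representation is cleaner and actually delivers super-exponential decay in $\eta$ mode by mode (stronger than the polynomial-weight statement required); it hinges on the transport coefficient being exactly $A\eta$, i.e.\ on the leading Euler flow being Couette, which turns the problem into Airy's equation. The paper's energy-estimate route is ``softer'' and matches the style used for the upper boundary layer and for the higher-order corrections \eqref{boundarylayerlk}, where the forcing $\t f_{\text{lower},k}$ is complicated but the linear operator is the same; with explicit Airy solutions you would solve those by Duhamel against the decaying branch, which also works but adds bookkeeping. Either way the key structural facts are identical: linearity, the $A\eta\p_x$ coefficient vanishing at $\eta=0$, and the rapid $x$-Fourier decay of the Euler trace.

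\textbf{One small point to nail down.} When you write $\hat w_n(\eta)=c_n\mathrm{Ai}\bigl((inA)^{1/3}\eta\bigr)$ you should fix the cube root so that $\arg\bigl((inA)^{1/3}\bigr)=\operatorname{sgn}(n)\,\pi/6$; the other two cube roots map $\eta\ge0$ out of the decay sector of $\mathrm{Ai}$. With that choice, $\mathrm{Bi}$ along the same ray grows super-exponentially, so imposing integrability (forced by $\hat v_n(\infty)=0$, as you correctly argue) does pin the solution to a one-dimensional family and then the boundary condition $\hat u_n(0)=g_n$ singles it out uniquely, matching the paper's uniqueness claim.
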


\pf

We set
\bes
 \hat{p}_p^{(1+\f{2}{3})}=A\int^x_0\hat{v}^{(1+\f{2}{3})}_p(\bar{x},0)d\bar{x},
\ees
and define
\bes
\hat{u}=\hat{u}^{(1)}_p+\kappa(\eta)u^{(1)}_e(x,0),\q \hat{v}=\hat{v}^{(1+\f{2}{3})}_p-\int^\eta_0\kappa(\bar{\eta})d\bar{\eta}\p_xu^{(1)}_e(x,0).
\ees
Then the system \eqref{boundarylayerl1} can be reformulated into
 \be\label{boundarylayerl1f}
 \lt\{
 \begin{aligned}
 &A\eta \p_x \hat{u}+A\hat{v}-\p^2_{\eta}\hat{u}=f_{\text{b},1},\\
 &\p_x \hat{u}+\p_\eta\hat{v}=0,\\
 &\hat{u}\big|_{\eta=0}=0,\q \p_\eta\hat{u}\big|_{\eta=+\i}=\hat{v}\big|_{\eta=+\i}=0,
 \end{aligned}
 \rt.
 \ee
where
\bes
f_{\text{b},1}:=A\p_xu^{(1)}_e(x,0)\lt(\eta\kappa(\eta)-\int^\eta_0\kappa(\bar{\eta})d\bar{\eta}\rt)-\kappa^{''}(\eta)u^{(1)}_e(x,0),
\ees
has compact support for $\eta$.

The system \eqref{boundarylayerl1f} can be solved by considering the following approximate elliptic system
 \bes
 \lt\{
 \begin{aligned}
 &A\eta \p_x \hat{u}^\nu+A\hat{v}^\nu-\lt(\nu^2\p^2_x+\p^2_{\eta}\rt)\hat{u}^\nu=f_{\text{b},1},\\
 &\p_x \hat{u}^\nu+\p_\eta\hat{v}^\nu=0,\\
 &\hat{u}^\nu\big|_{\eta=0}=0,\q \p_\eta\hat{u}^\nu\big|_{\eta=+\i}=0,
 \end{aligned}
 \rt.
 \ees
and  establishing a uniform a priori estimate of \eqref{decayofbll1} for $(\hat{u}^\nu, \hat{v}^\nu)$. Then letting $\nu\rightarrow0$ indicates the solvability of system \eqref{boundarylayerl1f}. Here we mainly focus on the decay estimates in \eqref{decayofbll1} and omit the standard limit process.

{\noindent\bf Estimates of the average  $\hat{u}_0$}

Denote
\bes
\hat{u}_0(\eta)=\f{1}{2\pi}\int^{2\pi}_0 \hat{u}(x,\eta)dx.
\ees
Integrating \eqref{boundarylayerl1f}$_1$ on $[0,2\pi]$ with respect to $x$ variable to obtain
\be\label{bldecayl1fi}
\lt\{
\bali
&-\p^2_\eta \hat{u}_0(\eta)=f_{0,\text{b},1}(\eta),\\
&\hat{u}_0\big|_{\eta=0}=0,\ \p_\eta\hat{u}_0\big|_{\eta=+\i},
\eali
\rt.
\ee
where
\bes
f_{0,\text{b},1}(\eta)=\f{1}{2\pi}\int^{2\pi}_0f_{\text{b},1}dx.
\ees
Then the solution of \eqref{bldecayl1fi} is obtained by
\bes
\hat{u}_0(\eta):=\int^\eta_0\int^{+\i}_{\bar{\eta}}f_{0,\text{b},1}(\t{\eta})d\t{\eta}d\bar{\eta}.
\ees

Define
\bes
\hat{A}_1:=\hat{u}_0(+\i)=\int^{+\i}_0\int^{+\i}_{\bar{\eta}}f_{0,\text{b},1}(\t{\eta})d\t{\eta}d\bar{\eta}.
\ees
It is easy to see that
\be\label{bldecayl1se}
\int_{0}^{+\i}\int_0^{2\pi}\big|\partial_x^j\partial_\eta^k \big(\hat{u}_0-\hat{A}_{1}\big)\big|^2\big<\eta\big>^{2\ell}dx d\eta\leq C_{j,k,\ell}\dl^2.
\ee

{\noindent\bf Estimates of $\hat{u}-\hat{u}_0$}

Denote $\hat{u}_{\neq 0}=\hat{u}-\hat{u}_0$ and $\hat{f}_{\neq 0}=f_{\text{b},1}-f_{0,\text{b},1}(\eta)$, then we have
 \be\label{boundarylayerl1th}
 \lt\{
 \begin{aligned}
 &A\eta \p_x \hat{u}_{\neq0}+A\hat{v}-\p^2_{\eta}\hat{u}_{\neq0}=f_{\neq0},\\
 &\p_x \hat{u}_{\neq0}+\p_\eta\hat{v}=0,\\
 &\hat{u}_{\neq0}\big|_{\eta=0}=0,\q \p_\eta\hat{u}_{\neq0}\big|_{\eta=+\i}=\hat{v}\big|_{\eta=+\i}=0,\\
 &\int^{2\pi}_0 \hat{u}_{\neq0}dx=\int^{2\pi}_0 f_{\neq0}dx=0.
 \end{aligned}
 \rt.
 \ee

Multiplying \eqref{boundarylayerl1th}$_1$ by $ \eta\hat{u}_{\neq0}$, $\p_x \hat{u}_{\neq0}$ and $ \eta^2 \p_x\hat{u}_{\neq0}$ respectively, then integrating the resulted equations over $\bT\times[0,1]$, and using integration by parts, the Cauchy inequality, Hardy inequality and Poincar\'{e} inequality, we can obtain that
\begin{equation}
\begin{split}
\label{bllone}
\|\s{\eta}		\p_\eta\hat{u}_{\neq0}\|^2_{L^2}=&\int \eta f_{\neq0} \hat{u}_{\neq0}-A\int \eta \hat{v}\hat{u}_{\neq0}\\
 \leq & C_\nu\| \s{\eta}\hat{u}_{\neq0}\|^2+\nu\| \s{\eta}\hat{v}\|^2+\int \eta f_{\neq0} \hat{u}_{\neq0}\\
\leq & C_\nu\| \s{\eta}\hat{u}_{\neq0}\|^2+\nu\| \s{\eta}\hat{v}\|^2+C_\nu\int \eta^2 f^2_{\neq0} +\nu \int \hat{u}^2_{\neq0}\\
= & C_\nu\| \s{\eta}\hat{u}_{\neq0}\|^2+\nu\| \s{\eta}\hat{v}\|^2+C_\nu\int \eta^2 f^2_{\neq0} +\nu \int \chi^2\hat{u}^2_{\neq0}+\nu \int (1-\chi^2)\hat{u}^2_{\neq0}\\
\leq & C_\nu\| \s{\eta}\hat{u}_{\neq0}\|^2+\nu\| \s{\eta}\hat{v}\|^2+C_\nu\int \eta^2 f^2_{\neq0} +\nu \int  \eta^2(\chi'\hat{u}_{\neq0})^2\\
 &+\nu \int  \eta^2(\chi\p_\eta\hat{u}_{\neq0})^2    +\nu \int \eta \hat{u}^2_{\neq0}\\
\leq & 2C_\nu\| \s{\eta}\p_x\hat{u}_{\neq0}\|^2+\nu\| \s{\eta}\hat{v}\|^2+C_\nu\int \eta^2 f^2_{\neq0} +\nu C\|\s{\eta}\p_\eta\hat{u}_{\neq0}\|^2.
\end{split}\end{equation}
Here $\chi=\chi(\eta)$ is a cut-off function, and we obtain the last inequality by using the compact support of $\chi$ and hence we have $\eta^2\leq C\eta$ in a bounded interval.

Also the following estimate stands by multiplying $\p_x \hat{u}_{\neq0}$ to \eqref{boundarylayerl1th}$_1$.
\begin{equation}
\begin{split}
\label{bll1two}
&A\|\s{\eta}\p_x\hat{u}_{\neq0}\|^2_{L^2}+\f{1}{2}A\int^{2\pi}_0|\hat{v}(x,0)|^2dx\\
=&-\int \p_xf_{\neq0} \hat{u}_{\neq0}\\
\leq&\nu\|\hat{u}_{\neq0}\|^2_{L^2}+C_\nu\|\p_xf_{\neq0}\|^2_{L^2}\\
\leq&\nu\| \s{\eta}\p_x\hat{u}_{\neq0}\|^2+ \nu C\|\s{\eta}\p_\eta\hat{u}_{\neq0}\|^2+C_\nu\|\p_xf_{\neq0}\|^2_{L^2}.
\end{split}
\end{equation}
Then using the incompressibility, and similar as \eqref{bllone}, we deduce that
\begin{equation}
\begin{split}\label{bllthree}
&A\|{\eta}^{\f{3}{2}}\p_x\hat{u}_{\neq0}\|^2_{L^2}+2A\| \s{\eta}\hat{v}\|^2\\
&=A\|{\eta}^{\f{3}{2}}\p_x\hat{u}_{\neq0}\|^2_{L^2}+A\int \hat{v}\eta^2\p_x\hat{u}_{\neq0}\\
&=\int f_{\neq0} \eta^3\p_x\hat{u}_{\neq0}.+\int \eta^2\p_x\hat{u}_{\neq0}\p^2_\eta\hat{u}_{\neq0}\\
&=\int f_{\neq0} \eta^3\p_x\hat{u}_{\neq0}.-2\int \eta\p_x\hat{u}_{\neq0}\p_\eta\hat{u}_{\neq0}\\
&\leq C_\nu\int f^2_{\neq0} \eta^3+ \nu \|{\eta}^{\f{3}{2}}\p_x\hat{u}_{\neq0}\|^2_{L^2}.+C_\nu \|\s{\eta}\p_x\hat{u}_{\neq0}\|^2_{L^2}+\nu\|\s{\eta}\p_\eta\hat{u}_{\neq0}\|^2_{L^2}.
\end{split}\end{equation}

Adding \eqref{bllone}, \eqref{bll1two} and \eqref{bllthree} together, and choosing $\nu$ small enough, we can obtain that
\begin{align*}
&\|\s{\eta}\p_x\hat{u}_{\neq0}\|^2_{L^2}+\|{\eta}^{\f{3}{2}}\p_x\hat{u}_{\neq0}\|^2_{L^2}+\| \s{\eta}\hat{v}\|^2_{L^2}+ \|\s\eta \p_\eta\hat{u}_{\neq0}\|^2_{L^2}\\
&\leq C_A\lt(\| \eta f_{\neq0}\|^2_{L^2}+\|\p_xf_{\neq0}\|^2_{L^2}+\| \eta^\f{3}{2} f_{\neq0}\|^2_{L^2}\rt).
\end{align*}

{\noindent\bf Induction on the weight}

Now for $\ell\geq 1$, if we have

\begin{align}
&\|\eta^{\ell-\f{1}{2}}\p_x\hat{u}_{\neq0}\|^2_{L^2}+\|\eta^{\ell+\f{1}{2}}\p_x\hat{u}_{\neq0}\|^2_{L^2}+\|\eta^{\ell-\f{1}{2}}\hat{v}\|^2_{L^2}+ \|\eta^{\ell-\f{1}{2}}\p_\eta\hat{u}_{\neq0}\|^2_{L^2}\leq C_\ell\dl^2. \label{bll1fourteen}
\end{align}
Now we go to prove that \eqref{bll1fourteen} is still valid for $\ell+1$.

Multiplying \eqref{boundarylayerl1th}$_1$ by $ \eta^{2\ell+1}\hat{u}_{\neq0}$ and $\eta^{2\ell+2}\p_x \hat{u}_{\neq0}$ respectively, then integrating the resulted equations over $\bT\times[0,1]$, we can obtain that
\begin{align}
&-\int \eta^{2\ell+1}\hat{u}_{\neq0} \p^2_\eta\hat{u}_{\neq0}=\int f_{\neq0}\eta^{2\ell+1} \hat{u}_{\neq0}-A\int\eta^{2\ell+1}\hat{u}_{\neq0}\hat{v},\nn
\end{align}
and
\begin{align}
&A\|\eta^{\ell+\f{3}{2}}\p_x\hat{u}_{\neq0}\|^2_{L^2}-\int \eta^{2\ell+2}\p_x\hat{u}_{\neq0}\p^2_\eta\hat{u}_{\neq0}+A\int \hat{v}\eta^{2\ell+2}\p_x\hat{u}_{\neq0}=\int f_{\neq0}\eta^{2\ell+2}\p_x\hat{u}_{\neq0}.\nn
\end{align}
The the incompressibility, integration by parts and the H\"{o}lder inequality indicate that
\begin{equation}
\begin{split}\label{bll1fifteen}
&\|\eta^{\ell+\f{1}{2}} \p_\eta\hat{u}_{\neq0}\|^2_{L^2}\\
&\leq (2\ell+1)\ell\|\eta^{\ell-\f{1}{2}}\hat{u}_{\neq0}\|^2_{L^2}+\|\eta^{\ell+\f{1}{2}} f_{\neq0}\|_{L^2}\|\eta^{\ell+\f{1}{2}} \hat{u}_{\neq0}\|_{L^2}+A\|\eta^{\ell+\f{3}{2}} \hat{u}_{\neq0}\|_{L^2}\|\eta^{\ell-\f{1}{2}} \hat{v}\|_{L^2}\\
&\leq(2\ell+1)\ell\|\eta^{\ell-\f{1}{2}}\p_x\hat{u}_{\neq0}\|^2_{L^2}+\|\eta^{\ell+\f{1}{2}} f_{\neq0}\|_{L^2}\|\eta^{\ell+\f{1}{2}} \hat{u}_{\neq0}\|_{L^2}+\nu\|\eta^{\ell+\f{3}{2}} \hat{u}_{\neq0}\|_{L^2}^2+C_\nu\|\eta^{\ell-\f{1}{2}} \hat{v}\|_{L^2}^2,
\end{split}\end{equation}
and
\begin{align}
&A\|{\eta}^{\ell+\f{3}{2}}\p_x\hat{u}_{\neq0}\|^2_{L^2}+(2\ell+2)\int \eta^{2\ell+1}\p_x\hat{u}_{\neq0}\p_\eta\hat{u}_{\neq0}+\f{2\ell+2}{2}A\int \hat{v}^2\eta^{2\ell+1}\nn\\
&\leq\| \eta^{\ell+\f{3}{2}} f_{\neq0}\|_{L^2} \|\eta^{\ell+\f{1}{2}}\p_x\hat{u}_{\neq0}\|_{L^2}.\label{bll1sixteen}
\end{align}

Adding \eqref{bll1fifteen} to \eqref{bll1sixteen} together and then using the Cauchy inequality and the induction assumption \eqref{bll1fourteen}, we can get that
\begin{align*}
&\|{\eta}^{\ell+\f{3}{2}}\p_x\hat{u}_{\neq0}\|^2_{L^2}+\|\eta^{\ell+\f{1}{2}} \hat{v}\|^2_{L^2}+ \|\eta^{\ell+\f{1}{2}} \p_\eta\hat{u}_{\neq0}\|^2_{L^2}\leq C_{\ell+1}\dl^2.
\end{align*}
If we apply $\p^j_x$ ($j\geq 0$) to \eqref{boundarylayerl1th} and we can also obtain that
\begin{align}
&\|\eta^{\ell+\f{3}{2}}\p^j_x\hat{u}_{\neq0}\|^2_{L^2}+\|\eta^{\ell+\f{1}{2}}\p^j_x\hat{v}\|^2_{L^2}+\|\eta^{\ell+\f{1}{2}}\p^j_x\p_\eta\hat{u}_{\neq0}\|^2_{L^2}\leq C_{j,\ell}\dl^2. \label{bll1thirteen}
\end{align}

For higher derivatives' estimate on $\eta$, we use the equation to obtain
\bes
\p^j_x\p^2_{\eta}\hat{u}_{\neq0}=\p^j_x\lt(A\eta \p_x \hat{u}_{\neq0}+A\hat{v}-f_{\neq0}\rt),
\ees
which can be estimated by using \eqref{bll1thirteen}.

For $k\geq 1$, we have
\begin{align}
\p^j_x\p^{k+2}_{\eta}\hat{u}_{\neq0}=&\p^j_x\p^{k}_{\eta}\lt(A\eta \p_x \hat{u}_{\neq0}+A\hat{v}-f_{\neq0}\rt)\nn\\
      =&Ak\p_x\p^{k-1}_\eta\hat{u}_{\neq0}+A\p^{k}_\eta\hat{v}-\p^{k}_\eta f_{\neq0}\nn\\
      =&A(k-1)\p_x\p^{k-1}_\eta\hat{u}_{\neq0}-\p^{k}_\eta f_{\neq0},\nn
\end{align}
which can be estimated inductively.

So, at last, we obtain that
\bes
\int_{0}^{+\i}\int_0^{2\pi}\big|\partial_x^j\partial_\eta^k \big(\hat{u}_{\neq0}, \hat{v}\big)\big|^2\eta^{2\ell+\f{1}{2}}dx d\eta\leq C_{j,k,\ell}\dl^2.
\ees
Combining the above inequality and \eqref{bldecayl1se}, we can arrive at that
\bes
\int_{0}^{+\i}\int_0^{2\pi}\big|\partial_x^j\partial_\eta^k \big(\hat{u}-\hat{A}_1, \hat{v}\big)\big|^2\eta^{2\ell+\f{1}{2}}dx d\eta\leq C_{j,k,\ell}\dl^2.
\ees
By using the Hardy inequality, we immediately deduce that
$$\|\partial_x^j\partial_\eta^k\hat{u}\|^2\leq C  \|\eta^2\partial_x^j\partial_\eta^{k+1}\hat{u}\|^2\leq   C_{j,k,2}\dl^2,$$
and then \eqref{decayofbll1}.

\subsubsection{The linearized Euler system for $(u_e^{(1+\f{2}{3})},\ v_e^{(1+\f{2}{3})},\ p_e^{(1+\f{2}{3})})$ and its solvability}
\indent

Since the system \eqref{middleeuler2} is completely the same as the system \eqref{middleeuler1}, we omit the details to prove the solvability.

\subsubsection{Corrections of the expansions to the boundary layer equations and the Euler equations}
\indent

Since we want the solutions of the boundary layer equations decaying fast at infinity, we define
\bes
\t{u}^{(1)}_p:=u^{(1)}_p-A_1,\q \t{\hat{u}}^{(1)}_p:=\hat{u}^{(1)}_p-\hat{A}_1,
\ees
and use $\t{u}^{(1)}_p$ and $\t{\hat{u}}^{(1)}_p$ to replace the original ${u}^{(1)}_p$ and ${\hat{u}}^{(1)}_p$ respectively. Then we need to correct $u^{(1)}_e$ to $\t{u}^{(1)}_e$ such that it satisfis
\begin{align}
&\t{u}^{(1)}_e\big|_{y=1}=-u^{(1)}_p\big|_{\zeta=0}+A_1={u}^{(1)}_e\big|_{y=1}+A_1\nn\\
&\t{u}^{(1)}_e\big|_{y=0}=-\hat{u}^{(1)}_p\big|_{\eta=0}+\hat{A}_1={u}^{(1)}_e\big|_{y=0}+\hat{A}_1,\nn
\end{align}
and $(\t{u}^{(1)}_e,v^{(1)}_e)$ is still a solution of the system \eqref{middleeuler1}.

Let
\bes
\t{u}^{(1)}_e={u}^{(1)}_e+\phi^{(1)}(y).
\ees
Obviously, $(\t{u}^{(1)}_e, v^{(1)}_e)$ is still a solution of the system \eqref{middleeuler1}. By using the incompressibility and harmonic property of $v^{(1)}_e$, we can see that
\bes
\p_x\Dl {u}^{(1)}_e=-\p_y\Dl {v}^{(1)}_e=0.
\ees
So we choose $\phi^{(1)}$ to satisfy
\bes
\lt\{
\bali
& \Dl \t{u}^{(1)}_e=\Dl {u}^{(1)}_e+\phi^{(1)}_{yy}=0,\\
& \phi^{(1)}(1)=A_1,\q \phi^{(1)}(0)=\hat{A}_1.
\eali
\rt.
\ees

Then we have
\begin{itemize}
\item $(\t{u}^{(1)}_e,v^{(1)}_e)$ is still a solution of system \eqref{middleeuler1};
\item $\t{u}^{(1)}_e\big|_{y=1}+\t{u}^{(1)}_p\big|_{\zeta=0}=0 ,\q \t{u}^{(1)}_e\big|_{y=0}+\t{\hat{u}}^{(1)}_p\big|_{\eta=0}=0$;
\item  $\Dl \t{u}^{(1)}_e=0$.
\end{itemize}

\subsection{Equations for higher order expansions and its solvability}
\indent

After performing the above lower order extensions, we can perform the higher order expansions inductively as follows.

\subsubsection{Equations for $(u_e^{(1+\f{1+k}{3})},v_e^{(1+\f{1+k}{3})},p_e^{(1+\f{1+k}{3})})$}\label{sec431}
\indent

For $k\geq 2$, by collecting the $\epsilon-1\f{1+k}{3}$th order terms of the Euler expansions in \eqref{eulerextension}, we deduce that $(u_e^{(1+\f{1+k}{3})},v_e^{(1+\f{1+k}{3})},$ $ p_e^{(1+\f{1+k}{3})})$ satisfies the following linearized Euler equations
\be\label{middleeulerk}
\left \{
\begin{array}{ll}
u_e(y) \partial_x u_e^{(1+\f{1+k}{3})}+v_e^{(1+\f{1+k}{3})}\p_y u_e(y)+\partial_x p_e^{(1+\f{1+k}{3})}=f^{(1+\f{1+k}{3})}_e,\\[5pt]
u_e(y) \partial_x v_e^{(1+\f{1+k}{3})}+\partial_y p_e^{(1+\f{1+k}{3})}=g^{(1+\f{1+k}{3})}_e,\\[5pt]
\partial_x u_e^{(1+\f{1+k}{3})}+\partial_y v_e^{(1+\f{1+k}{3})}=0,\\
 v_e^{(1+\f{1+k}{3})}|_{y=1}=-{v}_p^{(1+\f{1+k}{3})}|_{\zeta=0},\ v_e^{(1+\f{1+k}{3})}|_{y=0}=-\hat{v}_p^{(1+\f{1+k}{3})}|_{\eta=0},\\
 v_e^{(1+\f{1+k}{3})}(x,y)=v_e^{(1+\f{1+k}{3})}(x+2\pi,y),
\end{array}
\right.
\ee
where the forced term $f^{(1+\f{1+k}{3})}_e$ is defined as
\begin{align}
f^{(1+\f{1+k}{3})}_e:=-\sum^{k}_{i=0} u_e^{(1+\f{i}{3})}\partial_x u_e^{(\f{k+1-i}{3})}-\sum^{k}_{i=0} v_e^{(1+\f{i}{3})}\partial_y u_e^{(\f{k+1-i}{3})}+\Dl u_e^{(1+\f{k+1}{3}-2)},\nn
\end{align}
and $g^{(1+\f{1+k}{3})}_e$ is defined as
\begin{align}
g^{(1+\f{1+k}{3})}_e:=-\sum^{k}_{i=0} u_e^{(1+\f{i}{3})}\partial_x v_e^{(\f{k+1-i}{3})}-\sum^{k}_{i=0} v_e^{(1+\f{i}{3})}\partial_y v_e^{(\f{k+1-i}{3})}+\Dl v_e^{(1+\f{k+1}{3}-2)}.\nn
\end{align}
Here for notation simplification, we have set
\begin{align}
u^{(\f{1}{3})}_e=u^{(\f{2}{3})}_e=u^{(\f{4}{3})}_e\equiv 0,\q
v^{(0)}_e=v^{(\f{1}{3})}_e=v^{(\f{2}{3})}_e=v^{(\f{4}{3})}_e\equiv 0. \nn
\end{align}

{\noindent\bf Here we give an explanation why we choose the Euler shear flow satisfying $u^{''}_e(y)=0$. }

When $k=2$, the system \eqref{middleeulerk} is simplified to
\be\label{middleeuler3}
\left \{
\begin{array}{ll}
u_e(y) \partial_x u_e^{(2)}+v_e^{(2)}\p_y u_e(y)+\partial_x p_e^{(2)}=f^{(2)}_e,\\[5pt]
u_e(y) \partial_x v_e^{(2)}+\partial_y p_e^{(2)}=g^{(2)}_e,\\[5pt]
\partial_x u_e^{(2)}+\partial_y v_e^{(2)}=0,\\
 v_e^{(2)}|_{y=1}=-{v}_p^{(2)}|_{\zeta=0},\ v_e^{(2)}|_{y=0}=-\hat{v}_p^{(2)}|_{\eta=0},\\
 v_e^{(2)}(x,y)=v_e^{(2)}(x+2\pi,y),
\end{array}
\right.
\ee
where
\begin{align}
f^{(2)}_e:&=- u_e^{(1)}\partial_x u_e^{(1)}-v_e^{(1)}\partial_y u_e^{(1)}+\p^2_y u_e(y),\label{bwx}\\
g^{(2)}_e:&=- u_e^{(1)}\partial_x v_e^{(1)}- v_e^{(1)}\partial_y v_e^{(1)}.\nn
\end{align}

If $(u_e(y), 0)$ is the shear flow solution of the Euler equation, by taking $x$ average of \eqref{middleeuler3}, we can see that it satisfies
\bes
\int^{2\pi}_0 f^{(2)}_e(x,y)dx\equiv 0.
\ees
Then from \eqref{bwx}, we see that
\begin{equation*}
\begin{split}
2\pi u^{''}_e(y)=&\int_0^{2\pi}v_e^{(1)}\partial_yu_e^{(1)}dx\\
=&\int_0^{2\pi}\partial_yu_e^{(1)}d(\int_0^xv_e^{(1)}(x',y)dx')\\
=&-\int_0^{2\pi}\partial_{xy}u_e^{(1)}\int_0^xv_e^{(1)}(x',y)dx'\\
=&\int_0^{2\pi}\partial_{yy}v_e^{(1)}\int_0^xv_e^{(1)}(x',y)dx'\\
=&\int_0^{2\pi}\partial_{xx}v_e^{(1)}\int_0^xv_e^{(1)}(x',y)dx'\\
=&\int_0^{2\pi}\partial_{x}v_e^{(1)}v_e^{(1)}dx=0,
\end{split}
\end{equation*}
which gives the explanation of the choice of the leading Euler equation in \eqref{equationofleadingeuler}. \qed

Now we show the solvability of the system \eqref{middleeulerk}.
\begin{proposition}\label{propeulerorderk}
There exists $\dl_0>0$ such that for any $\dl\in(0,\dl_0)$, the linearized Euler equations (\ref{middleeulerk}) has a solution $(u_e^{(1+\f{1+k}{3})}, v_e^{(1+\f{1+k}{3})}, p_e^{(1+\f{1+k}{3})})$ which satisfies
\begin{align}\label{eulerkesti}
\|\partial^j_x\partial^k_y(u_e^{(1+\f{k+1}{3})},v_e^{(1+\f{k+1}{3})})\|_2\leq C_{j,k}\dl, \ \forall\ j,k\geq 0.
\end{align}
\end{proposition}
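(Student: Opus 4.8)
The plan is to argue by induction on $k$, running in the order indicated in Section~\ref{lowerexpansion}: when we come to solve \eqref{middleeulerk} all the Euler profiles $u_e^{(1+\frac{i}{3})},v_e^{(1+\frac{i}{3})}$ with $i\le k$, the upper layer profiles $u_p^{(1+\frac{1+k}{3})}$, and the lower layer profiles $\hat u_p^{(1+\frac{1+k}{3})},\hat v_p^{(1+\frac{1+k}{3})}$ have already been constructed and satisfy the bounds of Propositions~\ref{propeulerorder1}, \ref{propdcu1}, \ref{propdcl1} and the decay estimates \eqref{decayofblu0}, \eqref{decayofbll1} (after the corrections of Section~\ref{lowerexpansion}, so that $\Delta u_e^{(\cdot)}=0$ for the Euler profiles of lower order). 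In particular the forcing terms are smooth, $2\pi$-periodic in $x$, and satisfy $\|\partial_x^j\partial_y^m(f_e^{(1+\frac{1+k}{3})},g_e^{(1+\frac{1+k}{3})})\|_2\le C_{j,m}\dl$ for all $j,m\ge0$, with $\|\cdot\|$ suitably weighted near $y=0$; these are the only facts about the forcing we shall use.

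First I would eliminate the pressure exactly as in the proof of Proposition~\ref{propeulerorder1}: applying $\partial_y$ to \eqref{middleeulerk}$_1$, $\partial_x$ to \eqref{middleeulerk}$_2$, subtracting, and using $u_e(y)=Ay$ (hence $u_e''=0$) together with incompressibility $\partial_x u_e^{(1+\frac{1+k}{3})}+\partial_y v_e^{(1+\frac{1+k}{3})}=0$, one obtains the (degenerate) elliptic equation
\begin{equation*}
-Ay\,\Delta v_e^{(1+\frac{1+k}{3})}=\partial_y f_e^{(1+\frac{1+k}{3})}-\partial_x g_e^{(1+\frac{1+k}{3})}.
\end{equation*}
Taking the $x$-average annihilates the left side and $\partial_x g_e^{(1+\frac{1+k}{3})}$, so the equation is consistent only if $\partial_y\int_0^{2\pi}f_e^{(1+\frac{1+k}{3})}\,dx=0$; combined with the $x$-mean of \eqref{middleeulerk}$_1$ (whose $v_e$-term carries no zero $x$-frequency, since the Dirichlet data $-v_p^{(1+\frac{1+k}{3})}|_{\zeta=0}$, $-\hat v_p^{(1+\frac{1+k}{3})}|_{\eta=0}$ have vanishing $x$-mean by Propositions~\ref{propdcu1}, \ref{propdcl1}), this forces $\int_0^{2\pi}f_e^{(1+\frac{1+k}{3})}\,dx\equiv0$. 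I would verify this compatibility relation by the same stream-function computation that precedes \eqref{bwx}: write each lower-order $v_e^{(\cdot)}=-\partial_x\Phi^{(\cdot)}$, integrate the quadratic $x$-averages by parts, and use $\Delta u_e^{(\cdot)}=0$ (equivalently $\partial_{xx}v_e^{(\cdot)}=-\partial_{yy}v_e^{(\cdot)}$) to see that all the $x$-averaged forcing terms cancel, just as $2\pi u_e''=\int_0^{2\pi}v_e^{(1)}\partial_y u_e^{(1)}\,dx=0$ at level $k=2$.

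Granting the compatibility, $\bar v:=v_e^{(1+\frac{1+k}{3})}$ (which has no zero $x$-mode) solves a standard Dirichlet problem on $\bT\times(0,1)$: near $y=1$ it is uniformly elliptic, and near $y=0$ the coefficient $Ay$ degenerates, but the right-hand side vanishes there to matching order (each factor of $f_e,g_e$ that does not vanish at $y=0$ is paired with one that does, by incompressibility and the layer matching), so division by $Ay$ lands in every $L^2$-Sobolev space; elliptic regularity then yields $\|\partial_x^j\partial_y^m v_e^{(1+\frac{1+k}{3})}\|_2\le C_{j,m}\dl$. Having $v_e^{(1+\frac{1+k}{3})}$, define $u_e^{(1+\frac{1+k}{3})}(x,y):=-\int_0^x\partial_y v_e^{(1+\frac{1+k}{3})}(\bar x,y)\,d\bar x$, which is well defined, $2\pi$-periodic, divergence free with $v_e^{(1+\frac{1+k}{3})}$, and inherits the same bounds, so \eqref{eulerkesti} holds; finally recover $p_e^{(1+\frac{1+k}{3})}$ by the explicit antiderivative construction of Proposition~\ref{propeulerorder1} (now also integrating the $x$-independent remainder of $f_e^{(1+\frac{1+k}{3})}$ into the purely $y$-dependent part $\phi(y)$), and check by substitution that \eqref{middleeulerk}$_{1,2}$ are satisfied. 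I expect the genuine obstacle to be the compatibility step $\int_0^{2\pi}f_e^{(1+\frac{1+k}{3})}\,dx\equiv0$ for \emph{every} admissible $k$ — the general analogue of the Prandtl--Batchelor constraint $u_e''=0$ extracted at $k=2$, which is precisely what makes the whole asymptotic hierarchy closable — together with the bookkeeping needed to certify that the degeneracy of $Ay$ at the lower boundary is harmless.
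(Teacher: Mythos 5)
Your overall architecture (eliminate the pressure, solve for $v_e^{(1+\f{1+k}{3})}$ first, recover $u_e^{(1+\f{1+k}{3})}$ by integrating $-\p_y v_e^{(1+\f{1+k}{3})}$ in $x$, then build $p_e^{(1+\f{1+k}{3})}$ by antiderivatives, with the zero $x$-mean compatibility $\int_0^{2\pi}f_e^{(1+\f{1+k}{3})}dx\equiv 0$ checked by relabeling indices and harmonicity) matches the paper. But there is a genuine gap at the central step. After eliminating the pressure you arrive at $-Ay\,\Dl v_e^{(1+\f{1+k}{3})}=\p_yf_e^{(1+\f{1+k}{3})}-\p_xg_e^{(1+\f{1+k}{3})}$ and then treat this as a \emph{degenerate elliptic} problem, claiming that "the right-hand side vanishes at $y=0$ to matching order" so that division by $Ay$ is harmless. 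This claim is asserted, not proved, and the mechanism you offer for it ("each factor of $f_e,g_e$ that does not vanish at $y=0$ is paired with one that does") is not correct: the lower-order Euler profiles $u_e^{(1+\f{i}{3})}$, $\p_y v_e^{(1+\f{i}{3})}$, etc.\ do not vanish at $y=0$ in general (only the leading profile $u_e=Ay$ does), so there is no a priori reason the right-hand side should vanish on the lower boundary, let alone in every Sobolev norm after division by $Ay$. As written, your construction of $v_e^{(1+\f{1+k}{3})}$ and the derivation of the full family of estimates \eqref{eulerkesti} do not go through.

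What actually closes the argument in the paper is the much stronger \emph{pointwise identity} $\p_yf_e^{(1+\f{1+k}{3})}-\p_xg_e^{(1+\f{1+k}{3})}\equiv 0$ on all of $\O$: writing out the eight terms produced by $\p_y f_e-\p_x g_e$, they cancel in pairs ($I_1+I_3=I_5+I_7=0$ by incompressibility of each factor, $I_2+I_6=I_4+I_8=0$ by harmonicity of the lower-order profiles, which has been arranged inductively via the corrections $\phi^{(1+\f{1+k}{3})}(y)$). Hence $v_e^{(1+\f{1+k}{3})}$ is simply harmonic, the problem is a plain Dirichlet problem for the Laplacian with small, smooth, mean-zero boundary data supplied by Propositions \ref{propdcuk} and \ref{propdclk}, and no degenerate elliptic theory is needed anywhere. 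You partially sense this identity when you invoke $\Dl u_e^{(\cdot)}=0$ to verify the $x$-averaged compatibility, but you only use it for the zero mode; the same cancellation holds for all modes and is precisely what replaces your division by $Ay$. Separately, note that your statement "taking the $x$-average annihilates the left side" is circular at that point in the argument: $\int_0^{2\pi}\Dl v_e^{(1+\f{1+k}{3})}dx=\p_{yy}\int_0^{2\pi}v_e^{(1+\f{1+k}{3})}dx$ is only seen to vanish \emph{after} one knows $v_e^{(1+\f{1+k}{3})}$ is harmonic with mean-zero boundary data; the compatibility $\int_0^{2\pi}f_e^{(1+\f{1+k}{3})}dx\equiv0$ is needed for the pressure construction, not for solving the $v_e$-equation.
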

\indent

The idea of proof is the same as that of the system \eqref{middleeuler1}. First we cancel the pressure from \eqref{middleeulerk}$_1$ and \eqref{middleeulerk}$_2$ to obtain that
\be\label{middleeulerk1}
\lt\{
\bali
&-u_e(y) \Dl v_e^{(1+\f{1+k}{3})} =\p_yf^{(1+\f{1+k}{3})}_e-\p_xg^{(1+\f{1+k}{3})}_e,\\
&v_e^{(1+\f{1+k}{3})}|_{y=1}=-{v}_p^{(1+\f{1+k}{3})}|_{\zeta=0},\ v_e^{(1+\f{1+k}{3})}|_{y=0}=-\hat{v}_p^{(1+\f{1+k}{3})}|_{\eta=0},\\
&v_e^{(1+\f{1+k}{3})}(x,y)=v_e^{(1+\f{1+k}{3})}(x+2\pi,y).
\eali
\rt.
\ee

Noting that $v_e^{(1+\f{1+k}{3})}$ is harmonic, i.e.
\bes
\p_yf^{(1+\f{1+k}{3})}_e-\p_xg^{(1+\f{1+k}{3})}_e=0.
\ees
In fact, by the procedure of the Euler expansions, we find $\Dl u_e^{(1+\f{k+1}{3}-2)}=\Dl v_e^{(1+\f{k+1}{3}-2)}=0$, therefore there hold
\begin{align}
f^{(1+\f{1+k}{3})}_e:=-\sum^{k}_{i=0} u_e^{(1+\f{i}{3})}\partial_x u_e^{(\f{k+1-i}{3})}-\sum^{k}_{i=0} v_e^{(1+\f{i}{3})}\partial_y u_e^{(\f{k+1-i}{3})},\nn\\
g^{(1+\f{1+k}{3})}_e:=-\sum^{k}_{i=0} u_e^{(1+\f{i}{3})}\partial_x v_e^{(\f{k+1-i}{3})}-\sum^{k}_{i=0} v_e^{(1+\f{i}{3})}\partial_y v_e^{(\f{k+1-i}{3})}.\nn
\end{align}
Then a direct computation indicates that
\begin{align}
&\p_yf^{(1+\f{1+k}{3})}_e-\p_xg^{(1+\f{1+k}{3})}_e\nn\\
& =\underbrace{-\sum^{k}_{i=0} \p_yu_e^{(1+\f{i}{3})}\partial_x u_e^{(\f{k+1-i}{3})}}_{I_1}\underbrace{-\sum^{k}_{i=0} u_e^{(1+\f{i}{3})}\p_y\partial_x u_e^{(\f{k+1-i}{3})}}_{I_2}\nn\\
   &\quad\underbrace{-\sum^{k}_{i=0} \p_yv_e^{(1+\f{i}{3})}\partial_y u_e^{(\f{k+1-i}{3})}}_{I_3}\underbrace{-\sum^{k}_{i=0} v_e^{(1+\f{i}{3})}\p^2_y u_e^{(\f{k+1-i}{3})}}_{I_4}\nn\\
   &\quad\underbrace{+\sum^{k}_{i=0} \p_xu_e^{(1+\f{i}{3})}\partial_x v_e^{(\f{k+1-i}{3})}}_{I_5}+\underbrace{\sum^{k}_{i=0} u_e^{(1+\f{i}{3})}\partial^2_x v_e^{(\f{k+1-i}{3})}}_{I_6}\nn\\
   &\quad+\underbrace{\sum^{k}_{i=0} \p_xv_e^{(1+\f{i}{3})}\partial_y v_e^{(\f{k+1-i}{3})}}_{I_7}+\underbrace{\sum^{k}_{i=0} v_e^{(1+\f{i}{3})}\p_x\partial_y v_e^{(\f{k+1-i}{3})}}_{I_8}.\nn
\end{align}
By using the incompressibility and harmonic property of $(u_e^{(1+\f{i}{3})},v_e^{(1+\f{i}{3})})$ and $(u_e^{(\f{k+1-i}{3})},v_e^{(\f{k+1-i}{3})})$ , we can obtain that
\bes
I_1+I_3=I_5+I_7=0,\q I_2+I_6=I_4+I_8=0,
\ees
which indicates that $  \Dl v_e^{(1+\f{1+k}{3})}=0$. Then the system \eqref{middleeulerk1} is solved by solving the following linear boundary value problem
\be\label{middleeulerk2}
\lt\{
\bali
&- \Dl v_e^{(1+\f{1+k}{3})} =0,\\
&v_e^{(1+\f{1+k}{3})}|_{y=1}=-{v}_p^{(1+\f{1+k}{3})}|_{\zeta=0},\ v_e^{(1+\f{1+k}{3})}|_{y=0}=-\hat{v}_p^{(1+\f{1+k}{3})}|_{\eta=0},\\
&v_e^{(1+\f{1+k}{3})}(x,y)=v_e^{(1+\f{1+k}{3})}(x+2\pi,y).
\eali
\rt.
\ee

Since
\begin{align}
&\int^{2\pi}_0 v_e^{(1+\f{1+k}{3})}(x,1)dx=-\int^{2\pi}_0 {v}_p^{(1+\f{1+k}{3})}(x,0)dx=0,\nn\\
&\int^{2\pi}_0 v_e^{(1+\f{1+k}{3})}(x,0)dx=-\int^{2\pi}_0 \hat{v}_p^{(1+\f{1+k}{3})}(x,0)dx=0,\nn
\end{align}
by integrating \eqref{middleeulerk2}$_1$ in $x\in [0,2\pi]$, one gets that for any $y\in[0,1]$,
\bes
\int^{2\pi}_0 v_e^{(1+\f{1+k}{3})}(x,y)dx=0.
\ees
This implies , after integrating \eqref{middleeulerk1}$_1$ in $x\in [0,2\pi]$, the compatibility condition: for any $y\in[0,1]$,
\be\label{middleeulerk3}
\int^{2\pi}_0 f^{(1+\f{1+k}{3})}_e(x,y)dx=0.
\ee

Actually by integration by parts, we have
\begin{align}
 &\sum^{k}_{i=0}\int^{2\pi}_0 u_e^{(1+\f{i}{3})}\partial_x u_e^{(\f{k+1-i}{3})}dx\nn\\
& =-\sum^{k}_{i=0}\int^{2\pi}_0 \p_xu_e^{(1+\f{i}{3})} u_e^{(\f{k+1-i}{3})}dx\nn \text{ after relabeling the index} \label{middleeulerk4}\\
 & =-\sum^{k}_{i=0}\int^{2\pi}_0u_e^{(1+\f{i}{3})}\partial_x u_e^{(\f{k+1-i}{3})}dx,\nn
\end{align}
which implies that
\begin{align}
 &\sum^{k}_{i=0}\int^{2\pi}_0 u_e^{(1+\f{i}{3})}\partial_x u_e^{(\f{k+1-i}{3})}dx=0.
\end{align}

Using the incompressibility and harmonic property, we have
\begin{align}
 &\sum^{k}_{i=0}\int^{2\pi}_0 v_e^{(1+\f{i}{3})}\partial_y u_e^{(\f{k+1-i}{3})}dx\nn\\
& =-\sum^{k}_{i=0}\int^{2\pi}_0 v_e^{(1+\f{i}{3})}\int^x_0\partial^2_y v_e^{(\f{k+1-i}{3})}(\bar{x},y)d\bar{x}dx\nn\\
 & =\sum^{k}_{i=0}\int^{2\pi}_0 v_e^{(1+\f{i}{3})}\int^x_0\partial^2_x v_e^{(\f{k+1-i}{3})}(\bar{x},y)d\bar{x}dx \label{middleeulerk5}\\
 & =\sum^{k}_{i=0}\int^{2\pi}_0 v_e^{(1+\f{i}{3})}\lt(\partial_x v_e^{(\f{k+1-i}{3})}(x,y)-\partial_x v_e^{(\f{k+1-i}{3})}(0,y)\rt)dx\nn \\
 & =0.\nn
\end{align}
In the last but second line of \eqref{middleeulerk5},  integration by parts (the same as \eqref{middleeulerk4}) implies that the first term is zero, while the fact that the second term is zero is due to
\bes
\int^{2\pi}_0 v_e^{(1+\f{i}{3})}dx=0, \text{ for } \forall\ y\in[0,1].
\ees
Combining \eqref{middleeulerk4} and \eqref{middleeulerk5}, we obtain \eqref{middleeulerk3}.

With $v^{(1+\f{1+k}{3})}_e$ from \eqref{middleeulerk2}, we define
\bes
u^{(1+\f{1+k}{3})}_e(x,y)=-\int^x_0 \p_yv^{(1+\f{1+k}{3})}_e(\bar{x},y)d\bar{x},
\ees
which satisfies
\begin{eqnarray}
\left \{
\begin {array}{ll}
\partial_x u_e^{(1+\f{1+k}{3})}+\partial_yv_e^{(1+\f{1+k}{3})}=0,\\[5pt]
u_e^{(1+\f{1+k}{3})}(x,y)=u_e^{(1+\f{1+k}{3})}(x+2\pi,y).\nonumber
\end{array}
\right.
\end{eqnarray}

After obtaining $(u_e^{(1+\f{1+k}{3})}, v_e^{(1+\f{1+k}{3})})$, we construct $p_e^{(1+\f{1+k}{3})}$ as following
\begin{align*}
p_e^{(1+\f{1+k}{3})}(x,y):=\phi(y)- Ayu_e^{(1+\f{1+k}{3})}(x,y)-A \int^x_0v_e^{(1+\f{1+k}{3})}(\bar{x},y)d\bar{x}+\int^x_0f_e^{(1+\f{1+k}{3})}(\bar{x},y)d\bar{x},
\end{align*}
where $\phi(y)$ is a function which satisfies
\begin{align*}
\phi'(y)+Ay\partial_x v_e^{(1+\f{1+k}{3})}(0,y)=g^{(1+\f{1+k}{3})}_e(0,y).
\end{align*}
Combining the equations of $(u_e^{(1+\f{1+k}{3})}, v_e^{(1+\f{1+k}{3})})$, it's direct to obtain
\begin{align*}
u_e(y) \partial_x v_e^{(1+\f{1+k}{3})}+\partial_yp_e^{(1+\f{1+k}{3})}=g^{(1+\f{1+k}{3})}_e(x,y).
\end{align*}
Hence, $(u_e^{(1+\f{1+k}{3})}, v_e^{(1+\f{1+k}{3})},p_e^{(1+\f{1+k}{3})})$ solves the equation (\ref{middleeulerk}) and satisfies (\ref{eulerkesti}).

\subsubsection{Higher order boundary expansions near the upper boundary $\{y=1\}$}

\indent

By substituting the the upper boundary layer expansion \eqref{boundaryextensionu} into (\ref{ns}) and collecting the $\epsilon-1\f{1+k}{3}$th order terms, we obtain
 the following steady boundary equations for $(u_p^{(1+\f{1+k}{3})},v_p^{(2+\f{1+k}{3})},p_p^{(2+\f{1+k}{3})})$
 \be\label{boundarylayeruk}
 \lt\{
 \begin{aligned}
 &(A+u^{(0)}_p)\p_xu^{(1+\f{1+k}{3})}_p+u^{(1+\f{1+k}{3})}_p\p_xu^{(0)}_p+(v^{(1)}_e(x,1)+v^{(1)}_p)\p_\zeta u^{(1+\f{1+k}{3})}_p-\p^2_\zeta u^{(1+\f{1+k}{3})}_p\\
 &+(v^{(2+\f{1+k}{3})}_p-v^{(2+\f{1+k}{3})}_p(x,0))\p_\zeta u^{(0)}_p=f_{\text{upper},k}(x,\zeta),\\
 &\p_\zeta p^{(2+\f{2+k}{3})}_p=g_{\text{upper},k}(x,\zeta),\\
 &\p_x u^{(1+\f{1+k}{3})}_p+ \p_\zeta v^{(2+\f{1+k}{3})}_p=0,
 \end{aligned}
 \rt.
 \ee
 where the force term $f_{\text{upper},k}(x,\zeta)$ is defined as
 \begin{align}
 f_{\text{upper},k}(x,\zeta)=&-A\zeta \p_xu^{(\f{1+k}{3})}_p-\p_xp^{(1+\f{1+k}{3})}_p+\p^2_xu^{1+(\f{k+1}{3})-2}_p,\nn\\
                             &-\sum_{i\geq 1}\lt(\f{\p^i_yv^{(1)}_e(x,1)}{i!}\zeta^i\p_\zeta u^{(1+\f{1+k}{3}-i)}_p+\f{\p^i_yv^{(2+\f{1+k}{3}-i)}_e(x,1)}{i!}\zeta^i\p_\zeta u^{(0)}_p\rt)\nn\\
                             &-\sum_{i\geq 0}\lt(\f{\p_x\p^i_yu^{(1+\f{1+k}{3}-i)}_e(x,1)}{i!}\zeta^iu^{(0)}_p-\f{\p^i_yu^{(1+\f{1+k}{3}-i)}_e(x,1)}{i!}\zeta^i\p_xu^{(0)}_p\rt)\nn\\
                             &-\sum^k_{i=0}\lt(\sum_{j\geq 0}\f{\p^j_yu^{(1+\f{i}{3}-j)}_e(x,1)}{j!}\zeta^j+u^{(1+\f{i}{3})}_p\rt)\p_x u^{(\f{k+1-i}{3})}_p\nn\\
                             &-\sum^k_{i=0}u^{(1+\f{i}{3})}_p\sum_{j\geq 0}\f{\p_x \p^j_yu^{(\f{k+1-i-j}{3})}_e}{j!}\zeta^j\nn\\
                               &-\sum^{k+1}_{i=1}\lt(\sum_{j\geq 0}\f{\p^j_yv^{(1+\f{i}{3}-j)}_e(x,1)}{j!}\zeta^j+v^{(1+\f{i}{3})}_p\rt)\p_\zeta u^{1+(\f{k+1-i}{3})}_p\nn\\
                               &-\sum^{k+1}_{i=0}v^{(1+\f{i}{3})}_p\sum_{j\geq 0}\f{\p^{j+1}_y u^{(\f{k+1-i}{3}-j)}_e(x,1)}{j!}\zeta^j,\nn
 \end{align}
 and $g_{\text{upper},k}(x,\zeta)$ is defined as
 \begin{align}
 g_{\text{upper},k}(x,\zeta)=&\p^2_xv^{1+(\f{k+1}{3})-2}_p\nn\\
                              &-\sum^{k+1}_{i=0}\lt(\sum_{j\geq 0}\f{\p^j_yu^{(\f{k+1-i}{3}-j)}_e(x,1)}{j!}\zeta^j+u^{(\f{k+1-i}{3})}_p\rt)\p_x v^{(1+\f{i}{3})}_p\nn\\
                              &-\sum^k_{i=0}u^{(\f{k+1-i}{3})}_p\p_x v^{(1+\f{i}{3})}_p\nn\\
                               &-\sum^{k+1}_{i=0}\lt(\sum_{j\geq 0}\f{\p^j_yv^{(1+\f{i}{3}-j)}_e(x,1)}{j!}\zeta^j+v^{(1+\f{i}{3})}_p\rt)\p_\zeta v^{1+(\f{k+1-i}{3})}_p\nn\\
                               &-\sum^{k+1}_{i=0}v^{(1+\f{i}{3})}_p\sum_{j\geq 0}\f{\p^{j+1}_y v^{(\f{k+1-i}{3}-j)}_e(x,1)}{j!}\zeta^j.\nn
 \end{align}
Here we understand that
\bes
u^{(-\f{1}{3})}_p=u^{(\f{1}{3})}_p=u^{(\f{2}{3})}_p=u^{(\f{4}{3})}_p\equiv 0,
\ees
and
\bes
v^{(-\f{1}{3})}_p=v^{(\f{1}{3})}_p=v^{(\f{2}{3})}_p=v^{(\f{4}{3})}_p\equiv 0.
\ees

For the system \eqref{boundarylayeruk}, we have the following proposition.

\begin{proposition}\label{propdcuk}
There exists $\dl_0>0$ such that for any $\dl\in(0,\dl_0)$, the equations (\ref{boundarylayeruk}) has a unique solution $(u_p^{(1+\f{1+k}{3})},v_p^{(2+\f{1+k}{3})})$ which satisfies
\begin{align*}
\begin{aligned}
&\int_{-\infty}^0\int_0^{2\pi}\big|\partial_x^j\partial_\zeta^k \big({u}_p^{(1+\f{1+k}{3})}-A_{1+\f{1+k}{3}},v_p^{(2+\f{1+k}{3})}\big)\big|^2\big<\zeta\big>^{2\ell}dx d\zeta\leq C_{j,k,\ell}\dl^2,\\
&\int_0^{2\pi}v_p^{(2+\f{1+k}{3})}(x,\zeta)dx=0, \ \forall\ \zeta\leq 0,
\end{aligned}
\end{align*}
where
$A_{1+\f{1+k}{3}}:=\lim\limits_{\zeta\rightarrow -\infty}u_p^{(1+\f{1+k}{3})}(x,\zeta)$ is a constant which satisfies $|A_{1+\f{1+k}{3}}|\leq C\dl.$
\end{proposition}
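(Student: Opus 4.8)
The plan is to prove Proposition \ref{propdcuk} by induction on $k$, following verbatim the scheme already used for the system \eqref{boundarylayeru1} in Proposition \ref{propdcu1}. By the induction hypothesis, all Euler profiles $(u_e^{(1+\f{i}{3})},v_e^{(1+\f{i}{3})},p_e^{(1+\f{i}{3})})$ and all upper boundary layer profiles $(u_p^{(1+\f{i}{3})},v_p^{(1+\f{i}{3})})$ of matching order strictly below $1+\f{1+k}{3}$ have already been constructed, the former satisfying \eqref{eulerkesti} (equivalently \eqref{euler1esti}) and the latter the weighted decay bounds of Propositions \ref{propdcu0}, \ref{propdcu1} and \ref{propdcuk} at lower order. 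Granting this, the core of the proof reduces to two steps: first, showing that the forcing terms $f_{\text{upper},k}$ and $g_{\text{upper},k}$ are $2\pi$-periodic in $x$ and decay rapidly as $\zeta\to-\i$, namely
\bes
\int_{-\i}^0\int_0^{2\pi}\lt|\partial_x^j\partial_\zeta^\ell\lt(f_{\text{upper},k},g_{\text{upper},k}\rt)\rt|^2\langle\zeta\rangle^{2m}\,dx\,d\zeta\leq C_{j,\ell,m}\,\dl^2,\qquad\forall\,j,\ell,m\geq0;
\ees
and second, reducing the resulting system to the model Prandtl-type equation already solved in \cite{FGLT}.

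For the second step I would argue exactly as in the discussion preceding Proposition \ref{propdcu1}: set $\bar u:=A+u_p^{(0)}$ and $\bar v:=v_e^{(1)}(x,1)+v_p^{(1)}$, subtract from $u_p^{(1+\f{1+k}{3})}$ the anticipated limit constant $A_{1+\f{1+k}{3}}$, and use the fixed cutoff $\kappa\in C_c^\infty((-\i,0])$ with $\kappa(0)=1$ and $\int_{-\i}^0\kappa\,d\zeta=0$ to remove the inhomogeneous traces at $\zeta=0$ created by the matching conditions and the already-known Euler traces. After this change of unknowns, the first and third equations of \eqref{boundarylayeruk} take the form of system (2.47) in \cite{FGLT} with a new $2\pi$-periodic, rapidly decaying right-hand side, whose unique solvability, weighted estimates and zero-$x$-mean property were established there; undoing the change of unknowns gives $(u_p^{(1+\f{1+k}{3})},v_p^{(2+\f{1+k}{3})})$ with the claimed bounds. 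The constant $A_{1+\f{1+k}{3}}=\lim_{\zeta\to-\i}u_p^{(1+\f{1+k}{3})}$ is read off from the $x$-averaged equation for $u_p^{(1+\f{1+k}{3})}$ and is bounded by the weighted norm of $f_{\text{upper},k}$, hence $|A_{1+\f{1+k}{3}}|\leq C\dl$. The pressure is then recovered from the second equation of \eqref{boundarylayeruk} via $p_p^{(2+\f{1+k}{3})}(x,\zeta)=\int_{-\i}^\zeta g_{\text{upper},k}(x,\bar\zeta)\,d\bar\zeta$, which is well defined and decaying thanks to the bound on $g_{\text{upper},k}$; finally $\int_0^{2\pi}v_p^{(2+\f{1+k}{3})}\,dx=0$ follows by integrating the divergence-free relation in $x$ and using periodicity to see $\partial_\zeta\int_0^{2\pi}v_p^{(2+\f{1+k}{3})}\,dx=0$, together with the decay as $\zeta\to-\i$.

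I expect the genuine obstacle to be the first step, the rapid decay of $f_{\text{upper},k}$ and $g_{\text{upper},k}$. The delicate feature of these expressions is that they contain Taylor coefficients of the Euler profiles at $y=1$, terms of the form $\f{\partial_y^i v_e^{(\cdots)}(x,1)}{i!}\,\zeta^i$, which \emph{grow} polynomially in $\zeta$. However, the structure of the matched asymptotic expansion guarantees that each such polynomial factor is always multiplied by a boundary layer quantity --- $\partial_\zeta u_p^{(0)}$, $u_p^{(0)}$, $v_p^{(1)}$, or a lower-order profile $u_p^{(1+\f{i}{3})}$, $v_p^{(1+\f{i}{3})}$ --- each of which decays faster than any polynomial by Propositions \ref{propdcu0}, \ref{propdcu1} and the induction hypothesis, so every monomial is still rapidly decaying, with a prefactor of size $O(\dl)$; summing the finitely many terms gives the displayed weighted bound. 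Carrying this out requires going term by term through the long formulas for $f_{\text{upper},k}$ and $g_{\text{upper},k}$ and checking the pairing in each case --- routine but lengthy bookkeeping. Once it is done, the remaining details coincide with those for \eqref{boundarylayeru1} and can be omitted, with reference to \cite{FGLT}.
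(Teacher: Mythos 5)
Your proposal is correct and follows essentially the same route as the paper: the same cutoff-based change of unknowns around $\bar u=A+u_p^{(0)}$, $\bar v=v_e^{(1)}(x,1)+v_p^{(1)}$ reducing \eqref{boundarylayeruk} to system (2.47) of \cite{FGLT}, recovery of the pressure by integrating $g_{\text{upper},k}$ in $\zeta$, and the zero-mean/limit-constant arguments. Your explicit identification of the rapid decay of $f_{\text{upper},k},g_{\text{upper},k}$ (polynomial Taylor factors in $\zeta$ always paired with super-polynomially decaying boundary-layer profiles) is a point the paper merely asserts, so your write-up is, if anything, more complete.
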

\pf
Set
\begin{align*}
\bar{u}:&=A+u_p^{(0)}, \ \bar{v}:=v_e^{(1)}(x,1)+v_p^{(1)},\\
u:&=u_p^{(1)}+u_e^{(1)}(x,1)\kappa(\zeta),\\
v:&=v_p^{(2)}-v_p^{(2)}(x,0)-u_e^{(1)}(x,1)\int_0^\zeta \kappa(\bar{\zeta})d\bar{\zeta}.
\end{align*}
Then, the equations (\ref{boundarylayeruk}) reduce to
\begin{eqnarray}\label{bluk1}
\left \{
\begin {array}{ll}
\bar{u}\partial_x u+\bar{v}\partial_\zeta u+u\partial_x \bar{u}+v\partial_\zeta\bar{u}-\partial^2_{\zeta}u=\tilde{f}_{\text{upper},k},\\[7pt]
\partial_x u+\partial_\zeta v=0,\\[5pt]
u(x,\zeta)=u(x+2\pi,\zeta),\ v(x,\zeta)=v(x+2\pi,\zeta)\\[5pt]
u|_{\zeta=0}=v|_{\zeta=0}=0,\  \lim\limits_{x\rightarrow -\infty}\partial_\zeta u=0,
\end{array}
\right.
\end{eqnarray}
where $\tilde{f}_{\text{upper},k}$ is $2\pi$-periodic function and decays fast as $\zeta\rightarrow -\infty$. This system \eqref{bluk1} is solved  as \eqref{boundarylayeru1} and is exactly (2.47) in \cite{FGLT}. One can refer to there for further detailed proof of Proposition \ref{propdcuk}. Here we omit the details.

Next, we construct the pressure $p_p^{(2+\f{1+k}{3})}(x,\zeta)$. Consider the equation
\begin{align*}
\partial_\zeta p_p^{(2+\f{1+k}{3})}(x,\zeta)=g_{\text{upper},k}(x,\zeta), \quad \lim_{\zeta\rightarrow -\infty}p_p^{(2+\f{1+k}{3})}(x,\zeta)=0,
\end{align*}
where
\begin{align*}
g_{\text{upper},k}(x,\zeta)=-\lt[u^{(0)}_p\p_xv^{(1)}_e(x,1)+(A+u^{(0)}_p)\p_xv^{(1)}_p+(v^{(1)}_e(x,1)+v^{(1)}_p)\p_\zeta v^{(1)}_p-\p^2_\zeta v^{(1)}_p\rt].
\end{align*}
Actually, we just take $ p_p^{(2+\f{1+k}{3})}(x, \zeta)=\int^x_{-\i}g_{\text{upper},k}(x,\bar{\zeta})d\bar{\zeta}$.

\subsubsection{Higher order boundary expansions near the upper boundary $\{y=0\}$}

\indent

By substituting the the lower boundary layer expansion \eqref{boundaryextensionl} into (\ref{ns}) and collecting the $\epsilon-1\f{3+k}{3}$th order terms, we obtain
 the following steady boundary layer equations for $(\hat{u}_p^{(1+\f{1+k}{3})},\hat{v}_p^{(1+\f{3+k}{3})},$ $\hat{p}_p^{(1+\f{3+k}{3})})$

 \be\label{boundarylayerlk}
 \lt\{
 \begin{aligned}
 &A\eta\p_x\hat{u}^{(1+\f{1+k}{3})}_p+A\lt(\hat{v}^{(1+\f{3+k}{3})}_p-{v}^{(1+\f{3+k}{3})}_e(x,0)\rt)+\p_x\hat{p}^{(1+\f{3+k}{3})}_p-\p^2_{\eta}\hat{u}^{(1+\f{1+k}{3})}_p\\
  &=f_{\text{lower},k}(x,\eta),\\
 &\p_\eta p^{(1+\f{3+k}{3})}_p=g_{\text{lower},k}(x,\eta),\\
 &\p_x \hat{u}^{(1+\f{1+k}{3})}_p+ \p_\eta \hat{v}^{(1+\f{3+k}{3})}_p=0,\\
& \lim\limits_{\eta\rightarrow +\infty} \hat{v}^{(1+\f{3+k}{3})}_p=0,
 \end{aligned}
 \rt.
 \ee
  where  $f_{\text{lower},k}(x,\eta)$ is defined as
 \begin{align*}
 f_{\text{lower},k}(x,\eta)=&-\sum^{k+1}_{i=0}\lt(\sum_{j\geq 0}\f{\p^j_y u^{(1+\f{i}{3}-j)}_e(x,0)}{j!}\eta^j+\hat{u}^{(1+\f{i}{3})}_p\rt)\p_x \hat{u}^{(1+\f{k-i}{3})}_p\nn\\
                           &-\sum^{k+1}_{i=0}\hat{u}^{(1+\f{k-i}{3})}_p\sum_{j\geq 0}\f{\p_x\p^j_y u^{(1+\f{k-i}{3}-j)}_e}{j!}\eta^j\nn\\
                               &-\sum^{k+1}_{i=1}\lt(\sum_{j\geq 0}\f{\p^j_yv^{(1+\f{i}{3}-j)}_e(x,0)}{j!}\eta^j+\hat{v}^{(1+\f{i}{3})}_p\rt)\p_\eta \hat{u}^{1+(\f{k+2-i}{3})}_p\nn\\
                                &-\sum^{k+1}_{i=0}\hat{v}^{(1+\f{i}{3})}_p\sum_{j\geq 0}\f{\p^{j+1}_y u^{(1+\f{k-i}{3}-j)}_e}{j!}\eta^j\nn\\
                               &+\p^2_x\hat{u}^{(\f{k}{3})}_p,
                               \end{align*}
 and $g_{\text{lower},k}(x,\eta)$ is defined as
                               \begin{align*}
 g_{\text{lower},k}(x,\eta)=&A\eta\hat{u}^{(1+\f{k-1}{3})}_p+\p^2_x\hat{v}^{(\f{k-1}{3})}_p+\p^2_\eta\hat{v}^{1+(\f{k+1}{3})}_p\nn\\
                             &-\sum^{k+1}_{i=0}\lt(\sum_{j\geq 0}\f{\p^j_yu^{(\f{k+1-i}{3}-j)}_e(x,0)}{j!}\eta^j+\hat{u}^{(\f{k+1-i}{3})}_p\rt)\p_x \hat{v}^{(1+\f{i}{3})}_p\nn\\
                             &-\sum^k_{i=0}\hat{u}^{(\f{k+1-i}{3})}_p\sum_{j\geq 0}\f{\p_x\p^j_y {v}^{(1+\f{i}{3}-j)}_e(x,0)}{j!}\eta^j\nn\\
                               &-\sum^{k+1}_{i=0}\lt(\sum_{j\geq 0}\f{\p^j_yv^{(1+\f{i}{3}-j)}_e(x,0)}{j!}\eta^j+\hat{v}^{(1+\f{i}{3})}_p\rt)\p_\eta \hat{v}^{1+(\f{k-i}{3})}_p\nn\\
                               &-\sum^{k+1}_{i=0}\hat{v}^{(1+\f{i}{3})}_p\sum_{j\geq 0}\f{\p^{j+1}_y v^{(\f{k+1-i}{3}-j)}_e(x,0)}{j!}\eta^j.
 \end{align*}
Here we understand that
\bes
\hat{u}^{(\f{1}{3})}_p=\hat{u}^{(\f{2}{3})}_p=\hat{u}^{(\f{4}{3})}_p\equiv 0,
\ees
and
\bes
\hat{v}^{(\f{3}{3})}_p=\hat{v}^{(\f{5}{3})}_p=\hat{v}^{(\f{6}{3})}_p\equiv 0.
\ees
Also we have used \eqref{middleeulerk} for $k$ is replaced by $k+2$.

For the system \eqref{boundarylayerlk}, we have the following proposition.
\begin{proposition}\label{propdclk}
There exists $\dl_0>0$ and a constant $\hat{A}_{1+\f{1+k}{3}}$ such that for any $\dl\in(0,\dl_0)$, the system (\ref{boundarylayerlk}) has a unique solution $(\hat{u}_p^{(1+\f{1+k}{3})},\hat{v}_p^{(1+\f{3+k}{3})},\hat{p}^{(1+\f{3+k}{3})}_p)$ which satisfies
\begin{align*}
\begin{aligned}
&\int_{0}^{+\i}\int_0^{2\pi}\big|\partial_x^j\p^k_{\eta}\big(\hat{u}_p^{(1+\f{1+k}{3})}-\hat{A}_{1+\f{1+k}{3}},\hat{v}_p^{(1+\f{3+k}{3})}\big)\big|^2\langle\eta\rangle^{2\ell} dx d\eta\leq C_{j,k,\ell}\dl^2,\\
&\int_0^{2\pi}\hat{v}_p^{(1+\f{3+k}{3})}(x,\eta)dx=0, \ \forall\ \eta\geq 0,
\end{aligned}
\end{align*}
where $\hat{A}_{1+\f{1+k}{3}}=\lim\limits_{\eta\rightarrow+\i}\hat{u}_p^{(1+\f{1+k}{3})}$.
\end{proposition}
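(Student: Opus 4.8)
The proof is meant to run exactly as that of Proposition~\ref{propdcl1}, now as one step of an induction on the order index $k$: when step $k$ is reached, every lower-order quantity entering \eqref{boundarylayerlk} and the displayed formulas for $f_{\text{lower},k},g_{\text{lower},k}$ -- the Euler profiles together with their traces at $y=0$, and all boundary-layer profiles of strictly lower order near both boundaries -- is already constructed with the bounds of \eqref{euler1esti}, \eqref{eulerkesti}, Propositions~\ref{propdcu0}, \ref{propdcuk} and the present proposition at smaller index. As in \eqref{boundarylayerl1f}, and exactly as in the proof of Proposition~\ref{propdcuk}, I would first dispose of the pressure: since the second equation of \eqref{boundarylayerlk} is $\p_\eta \hat p_p^{(1+\f{3+k}{3})}=g_{\text{lower},k}$, set
\[
\hat p_p^{(1+\f{3+k}{3})}(x,\eta):=-\int_\eta^{+\infty} g_{\text{lower},k}(x,\bar\eta)\,d\bar\eta\;+\;A\int_0^x \hat v_p^{(1+\f{3+k}{3})}(\bar x,0)\,d\bar x,
\]
and then absorb the inhomogeneous trace $\hat u_p^{(1+\f{1+k}{3})}|_{\eta=0}=-u_e^{(1+\f{1+k}{3})}(x,0)$ by the fixed cut-off profile $\kappa$, writing $\hat u=\hat u_p^{(1+\f{1+k}{3})}+\kappa(\eta)u_e^{(1+\f{1+k}{3})}(x,0)$ and correcting $\hat v$ accordingly. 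After this reduction the pair $(\hat u,\hat v)$ solves a system of exactly the shape \eqref{boundarylayerl1f} with $f_{\text{b},1}$ replaced by an effective right-hand side $\tilde f_{\text{lower},k}$ and with homogeneous boundary data.

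\textbf{Control of the forcing.} The crux is to show that $\tilde f_{\text{lower},k}$ (and, for the pressure, $g_{\text{lower},k}$) are $2\pi$-periodic in $x$, decay faster than any polynomial as $\eta\to+\infty$, and satisfy $\|\langle\eta\rangle^{\ell}\p_x^{j}\p_\eta^{m}\tilde f_{\text{lower},k}\|_{L^2}\le C_{j,m,\ell}\dl$. Each monomial of $f_{\text{lower},k},g_{\text{lower},k}$ is a product of a Taylor coefficient of an Euler quantity times a power $\eta^{j}$ (polynomial growth in $\eta$) with a lower-order boundary-layer quantity or one of its derivatives (super-polynomial decay by the induction hypothesis), hence fast-decaying; the exceptional linear pieces $A\eta\,\hat u_p^{(1+\f{k-1}{3})}$ and $A\eta\,\p_x\hat u_p^{(1+\f{1+k}{3}-i)}$ are of the same type. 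The $\dl$-smallness is inherited because each factor is either $O(\dl)$ or the Couette weight $A\eta$ times an $O(\dl)$ factor. Passing to the $x$-average of the reduced first equation and using, exactly as in Section~\ref{sec431}, that $\int_0^{2\pi} v_e^{(1+\f{1+k}{3})}(x,y)\,dx=0$ and $\int_0^{2\pi}\hat v_p^{(1+\f{3+k}{3})}(x,\eta)\,dx=0$, the average equation collapses to the ODE $-\p_\eta^2\hat u_0=\overline{\tilde f_{\text{lower},k}}$; solving it by double integration defines $\hat A_{1+\f{1+k}{3}}:=\hat u_0(+\infty)$, and the weighted bound for $\hat u_0-\hat A_{1+\f{1+k}{3}}$ follows from that for $\tilde f_{\text{lower},k}$.

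\textbf{Solving the reduced system.} With $\tilde f_{\text{lower},k}$ under control, the mean-zero part $\hat u_{\neq0}$ together with $\hat v$ solves a system identical in structure to \eqref{boundarylayerl1th}, so the remaining work is a line-by-line repetition of the argument for Proposition~\ref{propdcl1}: solve the $\nu$-regularisation (with $-\p_\eta^2$ replaced by $-(\nu^2\p_x^2+\p_\eta^2)$ in the first equation), run the three multiplier estimates with test functions $\eta\hat u_{\neq0}$, $\p_x\hat u_{\neq0}$, $\eta^2\p_x\hat u_{\neq0}$ as in \eqref{bllone}--\eqref{bllthree}, then the weight induction \eqref{bll1fourteen} with test functions $\eta^{2\ell+1}\hat u_{\neq0}$ and $\eta^{2\ell+2}\p_x\hat u_{\neq0}$, and recover all $\eta$-derivatives from $\p_\eta^{m+2}\hat u_{\neq0}=\p_\eta^{m}(A\eta\,\p_x\hat u_{\neq0}+A\hat v-\tilde f_{\text{lower},k})$ used inductively in $m$; letting $\nu\to0$, adding back the cut-off corrections and invoking the Hardy inequality then yields the stated weighted bounds for $(\hat u_p^{(1+\f{1+k}{3})}-\hat A_{1+\f{1+k}{3}},\hat v_p^{(1+\f{3+k}{3})})$, while $\int_0^{2\pi}\hat v_p^{(1+\f{3+k}{3})}\,dx=0$ follows by integrating the divergence-free relation in $x$ and using $\hat v|_{\eta=0}=0$. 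Uniqueness is immediate from linearity, the difference of two solutions satisfying the homogeneous reduced system. I expect the only genuine obstacle to be the book-keeping of the forcing: the displayed formulas for $f_{\text{lower},k}$ and $g_{\text{lower},k}$ are long, and one must check term by term that neither the fast $\eta$-decay nor the $O(\dl)$-smallness is lost; once that is done, everything else reduces to Proposition~\ref{propdcl1}.
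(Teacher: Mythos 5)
Your proposal matches the paper's own proof: the authors likewise set $\hat p_p^{(1+\f{3+k}{3})}=-\int_\eta^{+\infty}g_{\text{lower},k}\,d\bar\eta+A\int_0^x\hat v_p^{(1+\f{3+k}{3})}(\bar x,0)\,d\bar x$, absorb the trace $-u_e^{(1+\f{1+k}{3})}(x,0)$ with the cut-off $\kappa$, and reduce to a system of exactly the form \eqref{boundarylayerl1f} with a fast-decaying forcing $\t f_{\text{lower},k}$, whose solvability and weighted decay are obtained by repeating the $\nu$-regularisation, the multiplier estimates, and the weight induction of Proposition \ref{propdcl1}. Your proposal is correct and takes essentially the same route; it is in fact more explicit than the paper about verifying the decay and $\dl$-smallness of the forcing term by term.
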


\pf

We set
\bes
 \hat{p}_p^{(1+\f{3+k}{3})}=-\int^{+\i}_\eta g_{\text{lower},k}(x,\bar{\eta})d\bar{\eta}+A\int^x_0\hat{v}^{(1+\f{3+k}{3})}_p(\bar{x},0)d\bar{x},
\ees
and define
\bes
\hat{u}=\hat{u}^{(1+\f{k+1}{3})}_p+\kappa(\eta)u^{(1+\f{1+k}{3})}_e(x,0),\q \hat{v}=\hat{v}^{(1+\f{3+k}{3})}_p-\int^\eta_0\kappa(\bar{\eta})d\bar{\eta}\p_xu^{(1+\f{1+k}{3})}_e(x,0).
\ees
Then system \eqref{boundarylayerlk} can be reformulated into
 \be\label{boundarylayerlkf}
 \lt\{
 \begin{aligned}
 &A\eta \p_x \hat{u}+A\hat{v}-\p^2_{\eta}\hat{u}=\t{f}_{\text{lower},k},\\
 &\p_x \hat{u}+\p_\eta\hat{v}=0,\\
 &\hat{u}\big|_{\eta=0}=0\q \p_\eta\hat{u}\big|_{\eta=+\i}=\hat{v}\big|_{\eta=+\i}=0,
 \end{aligned}
 \rt.
 \ee
where $\t{f}_{\text{lower},k}(x,\eta)$ is defined as
\begin{align*}
\t{f}_{\text{lower},k}:=&{f}_{\text{lower},k}+A\p_xu^{(1+\f{1+k}{3})}_e(x,0)\lt(\eta\kappa(\eta)-\int^\eta_0\kappa(\bar{\eta})d\bar{\eta}\rt)\nn\\
&-\kappa^{''}(\eta)u^{(1)}_e(x,0)+\int^{+\i}_\eta \p_xg_{\text{lower},k}(x,\bar{\eta})d\bar{\eta},
\end{align*}
which decays fast enough at $\eta$ infinity. And $\hat{v}$ can be determined by using the divergence-free condition, for $1\leq k\leq 19$, $$\hat{v}^{(1+\f{3+k}{3})}=\int_\eta^\infty \p_x\hat{u}^{(1+\f{1+k}{3})}(x,\eta') d\eta',$$
and for $k=20$, we choose
$$\hat{v}^{(1+\f{3+k}{3})}=-\int_0^\eta \p_x\hat{u}^{(1+\f{1+k}{3})}(x,\eta') d\eta'.$$

  The system \eqref{boundarylayerlkf} is the same as that in \eqref{boundarylayerl1f} and can be obtained the solvability and decay property of the solution by the same way.
\qed
\subsubsection{Correction of the boundary layer solutions and the Euler equations of higher order}
\indent

Since we want the boundary layer to decay fast at infinity, we define for $k\geq 1$,
\bes
\t{u}^{(1+\f{1+k}{3})}_p:=u^{(1+\f{1+k}{3})}_p-A_{1+\f{1+k}{3}},\q \t{\hat{u}}^{(1+\f{1+k}{3})}_p:=\hat{u}^{(1+\f{1+k}{3})}_p-\hat{A}_{1+\f{1+k}{3}},
\ees
and use $\t{u}^{(1+\f{1+k}{3})}_p$ and $\t{\hat{u}}^{(1+\f{1+k}{3})}_p$ to replace the original ${u}^{(1+\f{1+k}{3})}_p$ and ${\hat{u}}^{(1+\f{1+k}{3})}_p$ respectively. Then we need to correct $u^{(1+\f{1+k}{3})}_e$ to $\t{u}^{(1+\f{1+k}{3})}_e$ such that
\begin{align}
&\t{u}^{(1+\f{1+k}{3})}_e\big|_{y=1}=-u^{(1+\f{1+k}{3})}_p\big|_{\zeta=0}+A_{1+\f{1+k}{3}}={u}^{(1+\f{1+k}{3})}_e\big|_{y=1}+A_{1+\f{1+k}{3}}\nn\\
&\t{u}^{(1+\f{1+k}{3})}_e\big|_{y=0}=-\hat{u}^{(1+\f{1+k}{3})}_p\big|_{\eta=0}+\hat{A}_{1+\f{1+k}{3}}={u}^{(1+\f{1+k}{3})}_e\big|_{y=0}+\hat{A}_{1+\f{1+k}{3}},\nn
\end{align}
and $(\t{u}^{(1+\f{1+k}{3})}_e,v^{(1+\f{1+k}{3})}_e)$ is still a solution of the system \eqref{middleeulerk}.

Let
\bes
\t{u}^{(1+\f{1+k}{3})}_e={u}^{(1+\f{1+k}{3})}_e+\phi^{(1+\f{1+k}{3})}(y).
\ees
Obviously, $(\t{u}^{(1+\f{1+k}{3})}_e, v^{(1+\f{1+k}{3})}_e)$ is still a solution of the system \eqref{middleeulerk}. By using the incompressibility and harmonic property of $v^{(1+\f{1+k}{3})}_e$, we can see that
\bes
\p_x\Dl {u}^{(1+\f{1+k}{3})}_e=-\p_y\Dl {v}^{(1+\f{1+k}{3})}_e=0.
\ees
So we choose $\phi^{(1+\f{1+k}{3})}$ to satisfy
\bes
\lt\{
\bali
& \Dl \t{u}^{(1+\f{1+k}{3})}_e=\Dl {u}^{(1+\f{1+k}{3})}_e+\phi^{(1+\f{1+k}{3})}_{yy}=0,\\
& \phi^{(1+\f{1+k}{3})}(1)=A_{1+\f{1+k}{3}},\q \phi^{(1+\f{1+k}{3})}(0)=\hat{A}_{1+\f{1+k}{3}}.
\eali
\rt.
\ees
Then we have
\begin{itemize}
\item $(\t{u}^{(1+\f{1+k}{3})}_e,v^{(1+\f{1+k}{3})}_e)$ is still a solution of system \eqref{middleeulerk};
\item $\t{u}^{(1+\f{1+k}{3})}_e\big|_{y=1}+\t{u}^{(1+\f{1+k}{3})}_p\big|_{\zeta=0}=0 ,\q \t{u}^{(1+\f{1+k}{3})}_e\big|_{y=0}+\t{\hat{u}}^{(1)}_p\big|_{\eta=0}=0$;
\item  $\Dl \t{u}^{(1+\f{1+k}{3})}_e=0$.
\end{itemize}

Then we use $\t{u}^{(1+\f{1+k}{3})}_p, \t{\hat{u}}^{(1+\f{1+k}{3})}_p, \t{u}^{(1+\f{1+k}{3})}_e$ to be the new ${u}^{(1+\f{1+k}{3})}_p, \hat{u}^{(1+\f{1+k}{3})}_p, {u}^{(1+\f{1+k}{3})}_e$ to enter the next round iteration.
\qed

\subsection{Approximate solutions}\label{Approximate solutions}

\indent

In this subsection, we construct an approximate solution of the Navier-Stokes equations (\ref{ns}). First let $\chi(y)\in C^\i_c([0,+\i))$ be a function satisfying
\bes
\chi(y)=\lt\{
\bali
&1,\q y\in [0,1/4],\\
&0,\q y\geq 3/4.
\eali
\rt.
\ees
Set
\begin{align*}
{u}_{p,\text{com}}^a(x,y):=& (1-\chi(y))^2 \lt[u_p^{(0)}+\epsilon {u}_p^{(1)}+\sum^{20}_{k=1}\e^{1+\f{1+k}{3}}u^{(1+\f{1+k}{3})}_p\rt]\\[5pt]
 &+ \chi^2(y) \lt[\epsilon \hat{u}_p^{(1)}+\sum^{20}_{k=1}\e^{1+\f{1+k}{3}}\hat{u}^{(1+\f{1+k}{3})}_p \rt]\\
:=& (1-\chi(y))^2 u_p^a+ \chi^2(y)\hat{u}_p^a,\\[5pt]
{v}_{p,\text{com}}^a(x,y):=&(1-\chi(y))^2\lt[\epsilon {v}_p^{(1)}+\epsilon^2 {v}_p^{(2)}+\sum^{20}_{k=1}\e^{2+\f{1+k}{3}}v^{(2+\f{1+k}{3})}_p\rt]\\
  &+\chi^2(y)\lt[\epsilon^{1+\f{2}{3}} \hat{v}_p^{(1+\f{2}{3})}+\sum_{k=1}^{20}\e^{1+\f{3+k}{3}}\hat{v}^{(1+\f{3+k}{3})}_p\rt]\\[5pt]
:=&(1-\chi(y))^2 v_p^a+\chi^2(y) \hat{v}_p^a,\\[5pt]
{p}_{p,\text{com}}^a(x,y):=&(1-\chi(y))^4\lt[\epsilon {p}_p^{(1)}+\epsilon^2 {p}_p^{(2)}+\sum^{20}_{k=1}\e^{2+\f{1+k}{3}}p^{(2+\f{1+k}{3})}_p\rt]\\
        &+\chi^4(y)\lt[\epsilon^{1+\f{2}{3}} \hat{p}_p^{(1+\f{2}{3})}+\sum_{k=1}^{20}\e^{1+\f{3+k}{3}}\hat{p}^{(1+\f{3+k}{3})}_p\rt]\\[5pt]
:=&(1-\chi(y))^4 p_p^a+\chi^4(y) \hat{p}_p^a,
\end{align*}
and
\begin{align*}
u_e^a:=&Ay+\e u^{(1)}_e+\sum^{20}_{k=1}\e^{1+\f{1+k}{3}}u^{(1+\f{1+k}{3})}_e, \\[5pt]
 v_e^a:=&\e v^{(1)}_e+\sum^{20}_{k=1}\e^{1+\f{1+k}{3}}v^{(1+\f{1+k}{3})}_e,\\[5pt]
 p_e^a:=&\e p^{(1)}_e+\sum^{20}_{k=1}\e^{1+\f{1+k}{3}}p^{(1+\f{1+k}{3})}_e.
\end{align*}

We construct an approximate solution $(u^a,v^a,p^a)$ by
\begin{align*}
u^a(x,y):&=u_e^a+{u}_{p,\text{com}}^a+\epsilon^{9}h(x,y),\\[5pt]
v^a(x,y):&=v_e^a+{v}_{p,\text{com}}^a,\\[5pt]
p^a(x,y):&=p_e^a(x,y)+{p}_{p,\text{com}}^a,
\end{align*}
where the corrector $h(x,y)$ will be given in Appendix, which satisfies
\begin{align*}
h(x,0)=h(x,1)=0, \ \|\partial_x^j\partial_y^k h\|_2\leq C_{j,k}\epsilon^{-k},
\end{align*}
and makes $(u^a, v^a)$ be divergence-free
\begin{align*}
\p_x u^a+\p_yv^a=0.
\end{align*}
Moreover, $(u^a, v^a)$  satisfies the following boundary conditions
\begin{align*}
 u^a(x+2\pi,y)&=u^a(x,y), \ v^a(x+2\pi,y)=v^a(x,y),\\[5pt]
u^a(x, 1)&=\alpha+\dl f(x), \ v^a(x,1)=0,\\[5pt]
u^a(x,0)&=0,\ v^a(x,0)=0.
\end{align*}
By collecting the estimates in Proposition \ref{propeulerorder1}, Proposition \ref{propeulerorderk} and using the Sobolev embedding,  we deduce that
\begin{align}
&\|\p^j_x\p^k_y(u^a_e-Ay)\|_\infty\leq C_{j,k}\epsilon(\dl+\epsilon), \q \|\p^j_x\p^k_yv^a_e\|_\infty\leq C_{j,k}\epsilon(\dl+\epsilon).\nn
\end{align}
By collecting the estimates in Proposition \ref{propdcu0}, Proposition \ref{propdcu1} and Proposition \ref{propdcuk}, one has
\begin{align}
\|\zeta^\ell\p^{j}_x\partial_\zeta^k u_p^a\|_\infty\leq C_{j,k,\ell} (\dl+\epsilon), \ \|\zeta^\ell\p^{j}_x\partial_\zeta^k v_p^a\|_\infty\leq C_{j,k,\ell} \epsilon(\dl+\epsilon).\nn
\end{align}
Similarly, there holds
\begin{align*}
\|\eta^\ell\p^{j}_x\partial_\eta^k \hat{u}_p^a\|_\infty\leq C_{j,k,\ell} \e(\dl+\epsilon^{2/3}),  \ \|\eta^\ell\p^{j}_x\partial_\eta^k \hat{v}_p^a\|_\infty\leq C_{j,k,\ell} \epsilon^{1+2/3}(\dl+\epsilon^{2/3}).
\end{align*}

Finally, set
 \begin{align*}
 R_u^a:&=u^a\p_xu^a+v^a\p_yu^a+\p_xp^a-\epsilon^2\Dl u^a,\\[5pt]
R_v^a:&=u^a\p_xv^a+v^a\p_yv^a+\p_y p^a-\epsilon^2\Dl v^a,
 \end{align*}
 then there holds
 \begin{align*}
 \|R_u^a\|_2+\|\partial_x R_u^a\|_2\leq C\varepsilon^9 , \  \|R_v^a\|_2+\|\partial_x R_v^a\|_2\leq C\epsilon^9,
 \end{align*}
and $(u^a, v^a, p^a)$ satisfies
\begin{eqnarray*}
\left\{
\begin{array}{lll}
u^a\p_xu^a+v^a\p_yu^a+\p_xp^a-\epsilon^2\Dl u^a=R_u^a,\ (x,y)\in \bT\times[0,1],\\[5pt]
u^a\p_xv^a+v^a\p_yv^a+\p_y p^a-\epsilon^2\Dl v^a=R_v^a,\ (x,y)\in \bT\times[0,1],\\[5pt]
 \p_xu^a+\p_yv^a=0, \ (x,y)\in \bT\times[0,1], \\[5pt]
 u^a(x+2\pi,y)=u^a(x,y), \ v^a(x+2\pi,y)=v^a(x,y), \ (x,y)\in \bT\times[0,1],\\[5pt]
u^a(x,1)=\alpha+\dl f(x),\ v^a(x,1)=0, \ x\in [0,2\pi], \\[5pt]
 u^a(x,0)=v^a(x,0)=0, \ x\in [0,2\pi].
\end{array}
\right.
\end{eqnarray*}
This is exactly the constructed approximate solution that we stated in the beginning of Section \ref{sec2}. \qed

\section{Appendix}

In this section, we give a construction of corrector $h(x,y)$ defined in section \ref{Approximate solutions}. Firstly, we give a simple lemma which is similar to Lemma 6.1 in Appendix B in \cite{FGLT}.

\begin{lemma}\label{corector equation}
Assume that $K(x,y)$ is a $2\pi$-periodic smooth function which satisfies
\beno
\int_0^{2\pi}K(x,y)dx=0, \ \forall\ y\in [0,1]; \quad K(x, 0)=K(x, 1)=0,
\eeno
then there exists a $2\pi$-periodic function $h(x,y)$ such that
\begin{align}\label{correctorh}
&\partial_x h(x,y)=K(x,y); \ h(x,0)=h(x, 1)=0;\nonumber\\
&\int_0^{2\pi}h(x,y)dx=0, \ \|\partial_x^j\partial_y^kh\|_2\leq C\|\partial_x^j\partial_y^kK\|_2.
\end{align}
\end{lemma}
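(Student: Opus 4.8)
The plan is to construct $h$ explicitly as the $x$-primitive of $K$, renormalized to have zero $x$-mean, and then to verify the four required properties together with the derivative bound. This is exactly the construction used for the corrector in \cite{FGLT}, and I would follow it essentially verbatim.

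First I would set $H(x,y):=\int_0^x K(x',y)\,dx'$. Since $\int_0^{2\pi}K(x,y)\,dx=0$ for every $y\in[0,1]$, the function $H$ is smooth and $2\pi$-periodic in $x$, and $\partial_x H=K$. To enforce the zero-mean condition, define
\[
h(x,y):=H(x,y)-\frac{1}{2\pi}\int_0^{2\pi}H(x,y)\,dx .
\]
Then $h$ is smooth and $2\pi$-periodic, $\partial_x h=\partial_x H=K$, and $\int_0^{2\pi}h(x,y)\,dx=0$ by construction. For the boundary values, the hypothesis $K(x,0)=0$ gives $H(x,0)=\int_0^x K(x',0)\,dx'=0$, hence $h(x,0)=0$; likewise $K(x,1)=0$ gives $h(x,1)=0$. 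This yields the first three lines of \eqref{correctorh}.

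For the derivative estimate I would pass to Fourier series in $x$: writing $K(x,y)=\sum_{k\neq0}\widehat K_k(y)e^{ikx}$ (the zero mode is absent precisely because $\int_0^{2\pi}K\,dx=0$), the construction gives $h(x,y)=\sum_{k\neq0}\frac{\widehat K_k(y)}{ik}e^{ikx}$. Hence for any $j,m\ge0$ the Fourier coefficient of $\partial_x^j\partial_y^m h$ is $(ik)^{\,j-1}\partial_y^m\widehat K_k(y)$, while that of $\partial_x^j\partial_y^m K$ is $(ik)^{\,j}\partial_y^m\widehat K_k(y)$. Since $|k|\ge1$ throughout the sum, $|k|^{\,j-1}\le|k|^{\,j}$, and Parseval's identity yields
\[
\|\partial_x^j\partial_y^m h\|_2^2=\sum_{k\neq0}|k|^{2(j-1)}\int_0^1|\partial_y^m\widehat K_k(y)|^2\,dy\le\sum_{k\neq0}|k|^{2j}\int_0^1|\partial_y^m\widehat K_k(y)|^2\,dy=\|\partial_x^j\partial_y^m K\|_2^2 ,
\]
which is the claimed bound (with constant $1$). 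Alternatively, without Fourier series, for $j\ge1$ one has $\partial_x^j\partial_y^m h=\partial_x^{\,j-1}\partial_y^m K$, a function with zero $x$-mean, while for $j=0$ the function $\partial_y^m h$ has zero $x$-mean with $\partial_x\partial_y^m h=\partial_y^m K$; in either case the Poincar\'e inequality in $x$ (applied slicewise and then integrated over $y\in[0,1]$) gives the estimate.

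There is no serious obstacle: the lemma is elementary. The only point needing a little care is to confirm that the single function $h$ defined above simultaneously satisfies the primitive relation, the zero-mean normalization, and the vanishing at $y=0,1$ — all three are compatible precisely because the hypotheses $\int_0^{2\pi}K\,dx=0$ and $K(x,0)=K(x,1)=0$ supply exactly what is needed — and to treat the $j=0$ case of the derivative bound separately from $j\ge1$, since for $j=0$ one genuinely needs the zero $x$-mean of $h$ rather than merely rewriting $h$ as a derivative of $K$.
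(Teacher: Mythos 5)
Your proposal is correct and uses essentially the same approach as the paper: define $h$ via Fourier series as $h(x,y)=\sum_{k\neq0}\frac{\widehat K_k(y)}{ik}e^{ikx}$ and verify the bound by Parseval. You simply flesh out the verification steps that the paper leaves implicit (the boundary values, the $j=0$ case, and the alternative Poincar\'e route), all of which are correct.
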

\begin{proof}
By the Fourier series expansion, we have
\beno
K(x,y)=\sum_{n\neq 0}K_n(y)e^{in x}, \quad K_n(0)=K_n(1)=0.
\eeno
Set
\beno
h(x,y)=\sum_{n\neq 0}\frac{K_n(y)}{in}e^{i n\theta}.
\eeno
It's easy to justify that $h(x,y)$ satisfies (\ref{correctorh}) which completes the proof.
\end{proof}

Next, we construct the corrector $h(x,y)$ by the above lemma.
Direct computation gives
\begin{align*}
&\p_x(u^a_e+ u^a_{p,\text{com}})+\p_y(v^a_e+v^a_{p,\text{com}})\nn\\
=&2\chi(y)\chi^{\prime}(y)(v^a_p+\hat{v}^a_p)-2\chi^{\prime}(y)v^a_p,\nn\\
          =&-\e^9 2\chi(y)\chi^{\prime}(y)\lt[(y-1)^{-9}\zeta^9 v^a_p+y^{-9}\eta^9\hat{v}^a_p\rt]-\e^9 2\chi^{\prime}(y)y^{-9}\eta^9v^a_p\\
          :=&-\e^9 K(x,y).
\end{align*}

Notice that $\chi'(r)=0,\ \text{for } r\in [0,1/4]\cup [3/4,1]$ and the properties of $v^a_p,  \hat{v}^a_p$, we know that $K(x,y)$ satisfies the assumption in Lemma \ref{corector equation}, then there exists  $h(x,y)$ such that
\beno
\p_x(u^a_e+ u^a_{p,\text{com}})+\p_y(v^a_e+v^a_{p,\text{com}})=-\e^9 \p_x h(x,y),
\eeno
which indicates that
\bes
\p_x u^a+\p_yv^a=0.
\ees

\section*{Data availability statement}

Data sharing is not applicable to this article as no datasets were generated or analysed during the current study.

\section*{Conflict of interest statement}

The authors declare that they have no conflict of interest.

\section*{Acknowledgments}

 M. Fei is partially supported by NSF of China under Grant No.12271004 and  NSF of Anhui Province of China under Grant No. 2308085J10 and No.gxbjZD2022009. X. Pan is supported by NSF of China  under Grant  No.11801268 and No.12031006 and the Fundamental Research Funds for the Central Universities of China under Grant No.NS2023039. We thank Dr. Chen Gao for helpful discussions on this topic.

\end{document}